\documentclass[11pt]{amsart}
\usepackage{amscd}
\usepackage{amsmath}
\usepackage{amsthm}
\usepackage{amsfonts}
\usepackage{amssymb}
\usepackage{color}

\numberwithin{equation}{section}
\usepackage{hyperref}

\usepackage{tikz-cd}

\newtheorem{pro}{Proposition}[section]
\newtheorem{proposition}[pro]{Proposition}
\newtheorem{lemma}[pro]{Lemma}
\newtheorem{theorem}[pro]{Theorem}
\newtheorem{thm:intro}{Theorem}
\newtheorem{cor:intro}{Corollary}
\newtheorem{corollary}[pro]{Corollary}
\newtheorem*{corollary*}{Corollary}
\newtheorem*{thm*}{Theorem}

{\theoremstyle{definition}
\newtheorem{definition}[pro]{Definition}

\newtheorem{example}[pro]{Example}

 }

{\theoremstyle{remark}
\newtheorem*{remark}{Remark}
}


\setcounter{tocdepth}{2}

\newcommand{\ra}{\rightarrow}
\newcommand{\lra}{\longrightarrow}

\newcommand{\EE}{{\mathbb E}}

\newcommand{\HH}{{\mathbb H}}

\newcommand{\RR}{{\mathbb R}}

\newcommand{\ZZ}{{\mathbb Z}}

\newcommand{\GL}{\mathop{\rm GL}\nolimits}
\newcommand{\SL}{\mathop{\rm SL}\nolimits}
\newcommand{\PSL}{\mathop{\rm PSL}\nolimits}
\newcommand{\PSO}{\mathop{\rm PSO}\nolimits}
\newcommand{\SO}{\mathop{\rm SO}\nolimits}

\newcommand{\Hom}{\mathop{\rm Hom}\nolimits}

\newcommand{\Aut}{\mathop{\rm Aut}\nolimits}
\newcommand{\Out}{\mathop{\rm Out}\nolimits}

\newcommand{\rank}{\mathop{\rm rank}\nolimits}

\newcommand{\Diff}{\mathop{\rm Diff}\nolimits}
\newcommand{\Aff}{\mathop{\rm Aff}\nolimits}
\newcommand{\Isom}{\mathop{\rm Isom}\nolimits}

\newcommand{\SLt}{{\SL(2,\bbR)}}

\newcommand{\tSL}{\widetilde{\SL}(2,\bbR)}
\newcommand{\tSO}{\widetilde{\SO}(2,\bbR)}


\newcommand{\lie}[1]{{\mathfrak{#1}}}

\DeclareMathOperator{\rad}{{rad}}   


\newcommand{\cK}{{\mathcal{K}}}

\newcommand{\cZ}{{\mathcal{Z}}}

\newcommand{\fD}{{\mathsf{D}}}

\newcommand{\fF}{{\mathsf{F}}}
\newcommand{\fG}{{\mathsf{G}}}

\newcommand{\fK}{{\mathsf{K}}}

\newcommand{\fN}{{\mathsf{N}}}

\newcommand{\fR}{{\mathsf{R}}}
\newcommand{\fS}{{\mathsf{S}}}
\newcommand{\fT}{{\mathsf{T}}}


\newcommand{\bb}[1]{{\mathbb #1}}    
\newcommand{\bbR}{{\bb R}}

\newcommand{\bbZ}{{\bb Z}}

\newcommand{\ac}[1]{\overline{#1}}
\newcommand{\cf}{{cf$.$\,}}

\begin{document}

\pagestyle{myheadings} \thispagestyle{empty} \setcounter{page}{1}


\title[]
{Isometry groups with radical, and aspherical Riemannian manifolds with large symmetry I}
\author[]{Oliver Baues}
\address{Department of Mathematics\\ 
University of Fribourg\\
Chemin du Mus\' ee 23\\
CH-1700 Fribourg, Switzerland}
\email{oliver.baues@unifr.ch}

\author[]{Yoshinobu Kamishima}
\address{Department of Mathematics, Josai University\\
Keyaki-dai 1-1, Sakado, Saitama 350-0295, Japan}
\email{kami@tmu.ac.jp}

\date{September 22, 2018} 
\keywords{Aspherical manifolds, Divisible manifold, Infrasolv tower, Solvable radical,
Large symmetry, Proper action, Infrasolv manifolds, Smooth toral actions}
\subjclass[2010]{57S30, 53C12, 53C30} 

\begin{abstract}
Every compact \emph{aspherical} Riemannian manifold  admits a
canonical series of orbibundle structures with infrasolv fibers
which is called its \emph{infrasolv tower}. The tower arises from
the solvable radicals of isometry group actions on the universal covers.  
Its length and the geometry of its base measure the degree of continuous symmetry of an
aspherical Riemannian manifold. We say that the manifold has 
\emph{large symmetry} if it admits an infrasolv tower 
whose base is a locally homogeneous space.  
We construct examples of aspherical manifolds with large symmetry, 
which do not support any locally homogeneous Riemannian metrics. 
\end{abstract}

\maketitle


\section{Introduction}

Let $X$ be a contractible Riemannian manifold, and 
suppose there exists a discrete group $\Gamma$ of isometries of $X$ 
such that the quotient space $$ X \, \big/ \, \Gamma $$  is compact.
Then $X$ is said to be \emph{divisible}. 
If in addition $\Gamma$ is torsion-free,  the quotient space 
is a compact aspherical manifold. 
Let $$ {\rm Isom}(X)$$  denote the group of isometries of $X$. 
The action of the continuous part  
$$   \fG \, = \, \Isom(X)^0 $$ 
of the isometry group on the space $X$ may be seen as describing the \emph{local symmetry} of the 
quotient metric space $X  \big/ \, \Gamma$, which is a  Riemannian orbifold. 

\smallskip 
Under the assumption that $X  \big/ \, \Gamma$ is a manifold, 
Farb and Weinberger \cite{FW} proved  the fundamental fact 
that 
$$     X \,   \big/   \Isom(X)^0 $$ 
is again a contractible manifold, 
and is giving rise to an \emph{Riemannian orbibundle} 
$$   X \,   \big/ \, \Gamma  \; \to \;     X  \,   \big/ \,  \Gamma \Isom(X)^0  \; , $$ 
where the fibers are locally homogeneous spaces modelled on  
$$    X_{\fG}  \; =  \;    \fG   \, \big/   \, \fK  \; , $$ 
with $\fK$ a maximal compact subgroup of $\fG$.  

\smallskip 
In this setup, a basic observation 
is the fact, 
that  
the continuous part of the isometry group of the Riemannian quotient
 $$  X  /   \Isom(X)^0$$   may be non-trivial, 
and thereby reveals \emph{``hidden''}  local symmetries of the space $X$ 
and its quotient $X /  \Gamma$. 
From this point of view, the 
local symmetry of  $X /  \Gamma$ will be 
encoded  in an ensuing tower of orbifold fibrations with locally homogeneous fibers.
And we can say that $X /  \Gamma$ has \emph{``maximal''} local symmetry  if the 
tower finally stops  over a locally homogeneous orbifold.  

\smallskip 
The purpose of this article is to initiate 
the study of the \emph{local symmetry of aspherical spaces}  
in terms of their naturally  
associated {\em towers of Riemannian orbifold fibrations}. 

\medskip 
\subsection{The radical quotient and divisibility} 
As a starting point, 
we are concerned with the action of the maximal 
connected normal solvable subgroup
$$    \fR  \;  \leq \, \fG = \Isom(X)^{0} \;  , $$  
which is called the \emph{solvable radical} of $\fG $. We then call the quotient space 
$$   Y \, = \, X \big/ \, \fR $$   the \emph{radical quotient} of $X$. 

\medskip 
\paragraph{\em Divisibility of the radical quotient} 
As our  first main result we show that the radical quotient is a contractible Riemannian manifold, and that it  is 
divisible by the image of $\Gamma$ in  $ \Isom(X \big/ \, \fR)   $. 


\begin{thm:intro} [\cf Theorem $\ref{thm:Theta_dis}$] \label1
The radical quotient $X \big/ \, \fR$ is a contractible Riemannian manifold, 
and  the image $\Theta$ of\/ 
$\Gamma$ in\/  $\Isom(  X \big/ \, \fR )$ 
acts properly discontinuously on $X \big/ \, \fR $ 
with compact quotient. 
\end{thm:intro} 

\begin{remark} We may view this result  as a parametrised version of 
the classical Bieberbach Auslander Wang Theorem (\cf Proposition \ref{prop:key-correct}), 
which concerns lattices in Lie groups. 
\end{remark}

Note that Theorem 1 shows that the radical quotient gives rise to an \emph{associated  Riemannian orbibundle} of the form 
\begin{equation} \label{eq:rad_bundle}       X \,   \big/ \, \Gamma  \; \to \;     Y  \,   \big/ \, \Theta  \;  .   \end{equation}
In the following, we will study the geometry of this fibration in more detail. 

%

\smallskip 
Before doing so, let us mention
the following topological result, which is of independent
interest, and  is used as a basic tool for the proof of Theorem~\ref{1}.  
It asserts that there are no compact Lie group actions
normalized by properly discontinuous actions on \emph{acyclic smooth manifolds}
with compact quotient: 

\begin{thm:intro} [\cf Theorem $\ref {thm:compact_trivial}$]\label 2  
Let $X$ be an orientable acyclic manifold and $\Gamma$  a group which
acts smoothly and properly discontinuously on $X$ with compact quotient. Let $\kappa$ be a compact Lie group acting faithfully and smoothly on $X$, such that the action is normalized 
by\/ $\Gamma$. Then $\kappa =\{ 1 \}$. 
\end{thm:intro}

The proof of Theorem 2 is based  mainly on \emph{cohomology of groups acting on acyclic complexes and
application of the Smith theorem}. 

\smallskip 
The first geometric application of Theorem 2 is
the following.

\begin{cor:intro}[\cf Corollary \ref{cor:no_normal_compact}]\label{c1}
Let $X$ be a contractible Riemannian manifold which 
is divisible. Then $\Isom(X)$ has no non-trivial compact normal subgroup.   
 \end{cor:intro}

\begin{remark} A similar result was obtained  in \cite[Claim II]{FW} for connected compact normal subgroups 
of $\Isom(X)$ under the possibly stronger assumption that there exist $\Gamma$ such that $X / \Gamma$ is a manifold.
By the above, this generalizes to merely divisible Riemannian manifolds $X$, and Corollary \ref{c1}  simultaneously strengthens the statement  to include possibly non-connected 
groups.
\end{remark} 

Based on (the proof of) Theorem \ref{1} and Theorem \ref{2}, we carry out 
a detailed analysis for the interaction of the smooth properly discontinuous action of 
the discrete discrete group  $\Gamma$ on $X$ with the radicals of $\Isom(X)^{0}$. 
Some of the additional results, which we obtain, are summarized in the following structure theorem: 

\begin{thm:intro}[\cf Theorem $\ref {thm:structure}$]\label3
Let $X$ be a contractible Riemannian
manifold and $\Gamma \leq \Isom(X)$ a discrete subgroup 
such that $X/\Gamma$ is compact.  Let $\fR$ denote the solvable radical of\/ $\Isom(X)^0$. 
Then there exists a unique Riemannian metric on the quotient $X/\fR$ such that  the 
map $$X \ra X/\fR$$  is a Riemannian submersion. 
It follows further that:
\begin{enumerate}
\item $\Isom(X)/\fR$ acts properly on $X/\fR$. 
\item  The kernel of the action $(1)$ is  the maximal compact normal subgroup of\/ 
$\Isom(X) /\fR \, . $
\item The image  
of\/  $\Isom(X)^0$ in $\Isom(X/\fR)$ is a 
semisimple Lie group $S$ of non-compact type without finite subgroups in its center. 
Moreover, it is a closed  normal subgroup of\/ $ \Isom(X/\fR)^{0}$. (And therefore normal in a
finite index subgroup of\/ $\Isom(X/\fR)$.) 
\item 
Moreover, $\Theta \cap S$ is a uniform lattice in $S$. 
\end{enumerate}
\end{thm:intro}
Remark that the Lie group $S$ in (3) may have infinite (but discrete) center, and such examples 
do naturally occur (see Section \ref{sect:exsl2R}).  

\medskip 
\paragraph{\em Infrasolv orbifolds and orbibundles} 
Let $R$ be a simply connected solvable Lie group. We let
$$      \Aff(R) = R \rtimes \Aut(R) $$
denote its group of affine transformations. 
Consider  a discrete subgroup $$ \Delta \, \leq \, \Aff(R) \; , $$  such that the
homomorphic image of $\Delta$ in $\Aut(R)$ has compact closure.  We may choose 
some left-invariant Riemannian metric on $R$ such that 
$\Delta$ acts by isometries. Then the quotient space $$   R \, \big/ \Delta$$  
is an aspherical Riemannian 
orbifold. Compact orbifolds of this type are traditionally called \emph{infrasolv orbifolds}. 

\smallskip
\begin{remark}
Infrasolv \emph{manifolds} form an important class of aspherical 
locally homogeneous Riemannian manifolds, and play a 
role in various geometrical contexts.  
See \cite{Baues, Tuschmann, Wilking} for review 
on infrasolv manifolds. 
 \end{remark}

\smallskip 
A Riemannian orbibundle  
will be called an  \emph{infrasolv bundle} if its fibers 
are compact infrasolv orbifolds (with respect to  the induced metric) which 
are all modeled on the same Lie group $R$  (\cf Definition \ref{fibersolv}).

\smallskip 
We can then state our main result on the geometry  of
the Riemannian orbibundle \eqref{eq:rad_bundle} 
which is associated to the radical quotient: 

\begin{thm:intro} [\cf Theorem $\ref {thm:basic_bundle}$] \label 4
There exists a simply connected solvable normal Lie sugroup $R$ of $\fR$, such that 
$X/\Gamma$ has an induced structure of 
Riemannian infrasolv fiber space (modeled on $R$) over the compact aspherical
Riemannian orbifold $Y/\Theta$.
\end{thm:intro} 

\subsection{Infrasolv towers and  manifolds of large symmetry}
According to  Theorem \ref{4}, the radical quotient $Y = X / \fR$ is a contractible Riemannian manifold
and it is divisible by the image of $\Gamma$ in $\Isom(Y)$. 
In general, the isometry group 
$\Isom(Y)$ will 
have a non-trivial continous part $\Isom(Y)^{0}$, and also the radical of  $\Isom(Y)^{0}$
can be non-trivial. (We will discuss several types of  examples below.)

We may thus repeat the process of taking radical quotients until the continuous  part of the 
isometry group is semisimple (or trivial). This gives rise to a canonical tower of Riemannian orbibundles, 
which by Theorem \ref 4 are infrasolv orbibundles,  and are  filtering the space $X/\Gamma$. Such a tower will be  called an \emph{infrasolv tower} for $X/\Gamma$:

\begin{cor:intro}[\cf Corollary \ref {cor:solvtower}]\hspace{1ex}  \label {c2}
Every aspherical Riemannian orbifold $X/\Gamma$ gives rise to a 
canonical infrasolv tower 
\begin{equation} \label{eq:tower2i} 
X/\Gamma \lra  X_1/ \Gamma_1 \lra \,  \cdots \, \lra X_{\ell}/\Gamma_\ell  \; ,  
\end{equation} where 
the solvable radical of\/  $ \Isom(X_{\ell})$ is trivial.
\end{cor:intro}


\smallskip 
\paragraph{\em Riemannian manifolds of large symmetry}
The length $\ell$ and structure of the tower \eqref{eq:tower2i} describe canonical 
invariants of Riemannian metrics on $X$ and $X/\Gamma$. The following notion 
is thus generalising locally homogeneous Riemannian manifolds: 

\begin{definition}\label{def:symmetry} 
We say that the Riemannian orbifold $X/ \Gamma$ has \emph{large local symmetry}
 if,   in \eqref{eq:tower2i}, $X_{\ell}=\{{\rm pt}\}$ or $X_\ell$ is a Riemannian homogeneous manifold of a semisimple Lie group.
\end{definition}

We illustrate the concept by constructing in Section \ref{sect:examples} 
 several examples of aspherical manifolds with large symmetry. 
 A simple example is a $2$-dimensional warped product of circles  
$$ M_{f}= S^1 {\times}_{\bar f} \,  S^1 \; ,  $$  
for certain warping function $\bar f$. Here $M_{f}$ is diffeomorphic to a $2$-torus whose universal cover 
$\displaystyle X$ satisfies $\Isom(X)^{0}=\RR$ 
(\cf Example \ref {ex:revolution_torus}).
More generally,  in this direction, we may also 
take \emph{any\/} (compact) locally symmetric space of noncompact type 
$$\displaystyle  N =  \Theta \backslash S/K \,  , $$ and we can form the warped product
(with fiber $N$ and base $S^{1}$) 
 $$ M_{N,f} =  S^{1} {\times}_{\bar f} \, N \to  S^{1} \; , $$ 
to obtain Riemannian manifolds of 
large symmetry which are not locally homogeneous. 

\smallskip 
The above kind of examples for metrics of large symmetry are built on spaces 
which admit locally homogeneous metrics from the beginning, and therefore 
may be seen as  ``merely'' exhibiting symmetry properties of particular Riemannian metrics 
on these spaces. In the following we come to  discuss the \emph{topological significance} of the concept
of large symmetry. 

\subsection{Riemannian  orbibundles arising from group extensions} 
We introduce now a 
general method to construct infrasolv bundles, 
starting from abstract group extensions  of the form 
$$ 1 \to \Lambda \to  \Gamma \to \Theta \ra 1 \, , $$ 
where the group  $\Lambda$, in general,  will be a virtually polycyclic group, 
and $\Theta$ is a discrete group which is dividing a contractible Riemannian 
manifold $Y$. 

\smallskip 
The basic construction is partially based on the notion 
of \emph{injective Seifert fiber spaces} (as developed in \cite{LeeRaymond}). 
The details will be explained in Section \ref{sect:group_extensions}. As an 
application of this method, we can derive: 

\begin{thm:intro}[\cf Theorem $\ref{thm:non_locally_homogeneous}$]\label8
Let $\Theta$ be a torsion free uniform lattice in the hyperbolic group ${\rm PSO}(n,1)$. 
Suppose
$$ \displaystyle 1\ra \ZZ^k\ra \Gamma \to \Theta\ra 1$$
is  a central group extension 
which has infinite order. 
Then  there exists a compact aspherical manifold $X/\Gamma$ such that 
\begin{enumerate}
\item[(1)] $X/\Gamma$  admits a metric of large symmetry with length $\ell = 1$.
\item[(2)] For $n\geq 3$, $X/\Gamma$ does not admit a locally homogeneous Riemannian metric. 
In particular, $\Gamma$ does not embed as a uniform lattice into a connected Lie group.
\item[(3)]  For $n=2$, $X/\Gamma$ admits the structure of a locally homogeneous Riemannian manifold. In particular, $\Gamma$ embeds as a uniform lattice into a connected Lie group. 
\end{enumerate}
\end{thm:intro}

The space $X/\Gamma$, also called
a Seifert fibering over the compact hyperbolic manifold $\HH^n/\Theta$, 
will inherit a Riemannian orbifold bundle structure over $\HH^n/\Theta$
with typical fiber a $k$-torus and exceptional fiber 
a Euclidean space form.  
This construction can be applied to any compact
locally homogeneous symmetric manifold $\Theta\backslash G/K$
of real rank $1$ such that $H^2(\Theta,\ZZ^k)$ has an element of
infinite order.

\begin{cor:intro} [\cf Corollary $\ref{cor:non_locally_homogeneous}$]\label {c3}
There exists a compact aspherical Riemannian manifold $X/\Gamma$ of dimension four that 
admits a complete infrasolv tower of length one, 
which is fibering over a three-dimensional hyperbolic manifold.  
Moreover, the manifold $X/\Gamma$ does not admit any locally homogeneous 
Riemannian metric.
\end{cor:intro}

The corollary shows the topological significance of the concept of large symmetry: 
\emph{The class of aspherical smooth manifolds admitting a metric of large symmetry 
is strictly larger than the class of manifolds which can be presented as a Riemannian 
locally homogeneous space.}   

\subsection{Smooth rigidity problem for manifolds of large symmetry} 
In a closing result for this paper we briefly touch on a particular differential 
topological aspect of our topic. Namely, in the following we are concerned 
with the existence of metrics of large symmetry on smooth manifolds 
\emph{homeomorphic} to the torus. 

\smallskip 
An $n$-dimensional \emph{exotic torus} is a compact smooth manifold
homeomorphic to the standard $n$-torus $T^n$ but not diffeomorphic
to $T^n$. Such manifolds are known to exist by \cite{WA},
\cite{HS} for example.
We prove that
every smooth manifold homeomorphic to the torus which is 
admitting a metric of large symmetry must be diffeomorphic 
to the standard torus $T^n$. In other words: 

\begin{thm:intro}[\cf Theorem $\ref{theorem:fake}$] \label{6}
Let $\tau$ be an $n$-dimensional exotic torus. Then $\tau$ does not
admit any Riemannian metric of \emph{large symmetry}.
\end{thm:intro}

This result  is embedded in a wider context of smooth rigidity results 
which are known for certain classes of locally homogeneous manifolds
(or orbifolds), for example, locally symmetric spaces (by Mostow strong rigidity, \cite{Mostow1}) 
or infrasolv manifolds (as in \cite{Baues}). In fact, since such spaces constitute the 
building blocks of manifolds of large symmetry, we would like to pose the
following 

 \smallskip  \noindent
 \emph{Question: Is the class of aspherical smooth manifolds admitting a 
metric of large symmetry smoothly rigid $?$ That is, given any two aspherical 
manifolds of large symmetry with isomorphic fundamental group, are they
diffeomorphic?}

\bigskip 
\paragraph{\em Organization of the paper} 

The paper is organized as follows.  
In Section \ref{sec:prel} we briefly discuss proper actions of Lie groups, and properly discontinuous actions
of discrete subgroups.
In Section \ref{sect:smooth_crystallographic} we prove Theorem \ref{2} and Corollary \ref{1}.
Topological concepts such as group
cohomology for infinite discrete groups and the Smith theorem 
for acyclic manifolds play a role here.
In Section \ref{sect:Isomradical} we study the action of 
the solvable radical of the isometry group
$\Isom(X)$ for divisible Riemannian manifolds $X$, and  
we prove Theorem \ref{1}. 
Also the structure theorem, Theorem \ref{3}, is proved here. 
In Section \ref{sec:soltower} we introduce the notion of infrasolv tower and
the concept of large symmetry to prove our corner stone results
Theorem \ref{4} and Corollary \ref{c2}. In Section \ref{sect:examples}  and
Section \ref{sect:group_extensions} we give several examples of aspherical manifolds 
with large symmetry, as well as a general construction method 
related to Seifert fiber spaces, which
gives Theorem \ref{8}, Corollary \ref{c3}.
In Section \ref{sec:fake} we prove the smooth rigidity theorem, Theorem 
\ref{6},  for topological tori with large symmetry. 

\bigskip 
\paragraph{\bf Acknowledgement}
\thanks{This research was performed
during the first author's stay at Tokyo Metropolitan University and Josai university
in 2017, 2018. He gratefully acknowledges the support of the Japanese
Science Foundation grants { no.15K04852, no.18K03284} during his stay.
The second author acknowledges the support of Mathematisches Institut
G\"ottingen during his stay in 2015.} 

\section{Preliminaries}\label{sec:prel}
\subsection{Solvable Lie groups}
Let $R$ be a connected solvable Lie group and $T \leq R$
a maximal compact subgroup. Let $N$ be the nilpotent radical of 
$R$, that is, its maximal connected normal nilpotent subgroup.
The intersection $N \cap T$ is the maximal compact subgroup
of $N$, which is central and characteristic 
in $N$, since $N$ is nilpotent. 
\begin{lemma}\label{lem:chara-R0}
Assume that $N$ is simply connected.
Then there exists a characteristic simply connected 
subgroup $R_0$ of $R$, with $N \leq R_{0}$, such
that $R= R_0 \rtimes T$.
\end{lemma}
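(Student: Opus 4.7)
The plan is to establish the lemma in three steps: setup, construction of $R_0$, and verification of characteristicity.

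First I would verify the basic framework. Since $N$ is simply connected nilpotent, the exponential map $\exp:\ln \to N$ is a diffeomorphism, so $N$ admits no non-trivial compact subgroup; in particular $T \cap N = \{1\}$. The projection $\pi \colon R \to R/N$ thus restricts to an embedding $T \hookrightarrow R/N$, whose image $T'$ is (by maximality of $T$ in $R$) the unique maximal torus of the abelian connected Lie group $R/N$. I would also establish the auxiliary fact that $T$ acts \emph{faithfully} on $\ln$ via $\Ad$: if a subtorus $T_0 \leq T$ acted trivially, then $[R, T_0] \subseteq [R, R] \subseteq N$ would imply that $NT_0$ is a connected normal nilpotent subgroup of $R$, forcing $NT_0 \subseteq N$ by maximality of the nilradical, and hence $T_0 \subseteq T \cap N = \{1\}$.

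Next, I would construct $R_0$ as the preimage $\pi^{-1}(A')$, where $A' \subseteq R/N$ is a closed simply connected subgroup complementary to $T'$ with $R/N = A' \oplus T'$ (such $A' \cong \RR^a$ exists because $R/N$ is an abelian connected Lie group with maximal torus $T'$). Then $R_0$ is closed, connected, normal in $R$, and contains $N$; the short exact sequence $1 \to N \to R_0 \to A' \to 1$ of simply connected Lie groups forces $R_0$ to be simply connected via the long exact sequence of homotopy groups. Together with $R_0 \cap T \subseteq N \cap T = \{1\}$, $R_0 T = R$, and the normality of $R_0$, this yields $R = R_0 \rtimes T$.

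The main obstacle is arranging $R_0$ to be characteristic in $R$, since the complement $A'$ is a priori non-canonical. To handle this, I would argue via the adjoint representation: the $R/N$-action on $\ln \otimes \CC$ yields a finite set of weights $\lambda_i \colon R/N \to \CC^*$, each factoring canonically as $\lambda_i = \lambda_i^r \cdot \lambda_i^u$ with $\lambda_i^r \colon R/N \to \RR_{>0}$ and $\lambda_i^u \colon R/N \to S^1$. I would take $A'$ to be the identity component of $\bigcap_i \ker(\lambda_i^u)$, so that the Lie algebra $\lr_0 = \ln \oplus \la'$ admits the intrinsic description $\{X \in \lr : \lambda_i(X) \in \RR \text{ for every weight } \lambda_i\}$. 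This subalgebra is defined purely in terms of the adjoint action of $\lr$ on the characteristic ideal $\ln$, so it is invariant under every automorphism of $\lr$, and hence $R_0 = \exp(\lr_0)$ is invariant under $\Aut(R)$. The delicate remaining step is verifying that this canonical $A'$ is transverse to $T'$ and has the correct dimension $a$; this reduces to the faithfulness of $T'$ on $\ln$ established in the first step, together with a careful comparison of the real and imaginary parts of the weights $\lambda_i$ on $\lr/\ln$.
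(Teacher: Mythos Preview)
Your approach has a genuine gap in the ``delicate remaining step.'' The subgroup $A' = \bigl(\bigcap_i \ker \lambda_i^u\bigr)^0$ need not have dimension $a = \dim(R/N) - \dim T'$, so $R_0 = \pi^{-1}(A')$ need not satisfy $R_0 T = R$. Faithfulness of $T$ on $\ln$ yields only $A' \cap T' = \{1\}$; it says nothing about the imaginary parts of the weights on a complement to $\lt'$, and no ``careful comparison of real and imaginary parts'' can repair this.

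A concrete counterexample: let $R = (S^1 \times \RR) \ltimes \CC^2$, where $(e^{i\theta}, t)$ acts on $(z,w)\in\CC^2$ by $(e^{i\theta}z,\, e^{it}w)$. Then $N=\CC^2$ is the nilradical and is simply connected, $T=S^1$, and $R/N = S^1\times\RR$. The weights of $R/N$ on $\ln\otimes\CC$ are $\pm i\theta$ and $\pm it$, all purely imaginary, so $\lambda_i^u=\lambda_i$ and $\bigcap_i \ker(d\lambda_i^u) = \{0\}$ in $\lr/\ln$. Thus your $A'$ is trivial, $R_0 = N$, and $NT \subsetneq R$.

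The paper proceeds by a different mechanism. It passes to the universal cover $\tilde R$ and invokes the structural fact that $\Aut(\tilde R)^0$ acts trivially on $\tilde R/N$; since $\Aut(\tilde R)$ is algebraic, its image in $\GL(\tilde R/N)$ is then finite, and an $\Aut(R)$-invariant complement $V_2'$ to the line $V_1$ covering $T'$ exists by averaging. One sets $R_0 = p^{-1}(V_2')$. In the example above $\Aut(R)$ turns out to act trivially on $\tilde R/N$, so \emph{every} complement is invariant and $R_0$ is not unique --- which explains why an attempt to single out $R_0$ by an intrinsic formula was bound to fail.
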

\begin{proof} Dividing by $N$, we obtain a quotient map 
 $\displaystyle  R\stackrel{p}\to  R/N = T_1\times V_2$,
where $T_1=p(T)$ and $V_{2}$ is a vector group. Let $V= V_{1} \times V_2 $ be the universal covering group 
of $T_1\times V_2$, where $V_{1}$ covers $T_{1}$.
Let $\tilde R$ be the universal covering 
of $R$. Then $V = \tilde R/N$. Under the induced 
action of  the automorphism group ${\rm Aut}(\tilde R)$ on
$V = \tilde R/N$ the identity component ${\rm Aut}(\tilde R)^0$ acts trivially  
(see \cite[Ch.\ III, Theorem 7]{Jacobson}). 
As ${\rm Aut}(\tilde R)$ is an algebraic group, 
${\rm Aut}(\tilde R)/{\rm Aut}(\tilde R)^0$ is finite. 
Note that $\Aut(R)$ is a subgroup of ${\rm Aut}(\tilde R)$. 
In particular, the image of $\Aut(R)$ in 
$\GL(V)$ is finite. As the factor $V_{1}$ is invariant under
$\Aut(R)$, there is an invariant complementary subspace in $V$, 
which projects to an $\Aut(R)$-invariant 
vector subgroup $V_{2}'$ of $T_1\times V_2$.
Then $R_0=p^{-1}(V_2')$ is a subgroup of 
$R$,  which is invariant under ${\rm Aut}(R)$.  
It is obvious that $R_0$ is
simply connected and that $R= R_0\rtimes \fT$.
\end{proof}

\subsection{Discrete subgroups of Lie groups} 
Let $G$ be a Lie group.
We call a closed subgroup $H$ of $G$ uniform if $G/H$ is compact. A discrete uniform subgroup is called a uniform lattice. 
We note the following elementary fact on uniform subgroups
(compare \cite[Theorem 1.13]{Raghunathan}):

\begin{lemma} \label{lem:uniformsubgs}
Let $H$ be a uniform subgroup of $G$
and $L$ a closed (normal) subgroup of $G$.  If $L/ L\cap H$ is compact then $L H$ is closed in $G$. Moreover, 
if $H= \Gamma$ is discrete then $L/ L\cap \Gamma$
is compact if and only if $L \, \Gamma$ is closed in $G$. 
\end{lemma}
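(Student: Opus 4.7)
The plan is to exploit the canonical continuous injection
$\bar\varphi \colon L/(L \cap H) \to G/H$, $\, l(L \cap H) \mapsto lH$,
whose image is exactly $LH/H$. For the first assertion, compactness of $L/(L \cap H)$ passes through $\bar\varphi$ to make $LH/H$ compact, hence closed in the Hausdorff space $G/H$; its preimage under the quotient $\pi \colon G \to G/H$ is $LH$, which is therefore closed in $G$. Note that normality of $L$ is never used here.

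For the converse in the discrete case, assume $\Gamma$ is discrete and $L\Gamma$ is closed in $G$. Since $L\Gamma$ is $\Gamma$-invariant under right multiplication, closedness transfers to $L\Gamma/\Gamma$ in $G/\Gamma$, and the uniformity hypothesis that $G/\Gamma$ is compact then forces $L\Gamma/\Gamma$ to be compact. The desired conclusion that $L/(L \cap \Gamma)$ is compact will follow once $\bar\varphi$ is promoted from a continuous bijection to a homeomorphism onto $L\Gamma/\Gamma$.

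Upgrading $\bar\varphi$ to a homeomorphism is where the discreteness of $\Gamma$ is essential. I would argue via local triviality of the covering map $\pi_G \colon G \to G/\Gamma$ (which exists because the right $\Gamma$-action on $G$ is free and properly discontinuous): every $p \in G/\Gamma$ has an evenly covered neighborhood $V$ with $\pi_G^{-1}(V) = \bigsqcup_{\gamma \in \Gamma} V_\gamma$, each $V_\gamma$ mapping homeomorphically onto $V$. Intersecting with the closed subset $L$ and setting $\Gamma_L := L \cap \Gamma$, the pieces $L \cap V_\gamma$ are permuted freely by right multiplication of $\Gamma_L$: explicitly, $(L \cap V_\gamma)\,\gamma_L = L \cap V_{\gamma\gamma_L}$ since $L\gamma_L = L$, so the orbits are indexed by $\Gamma/\Gamma_L$. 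Choosing coset representatives then exhibits $\bar\varphi^{-1}(V \cap L\Gamma/\Gamma)$ as a disjoint union of open sets in $L/\Gamma_L$, each carried homeomorphically onto its image in $V \cap L\Gamma/\Gamma$. Hence $\bar\varphi$ is a local homeomorphism, and being bijective, a homeomorphism.

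The main obstacle I anticipate lies in the bookkeeping of this last step: one must verify carefully that distinct pieces $L \cap V_\gamma$ and $L \cap V_{\gamma'}$ collapse in $L/\Gamma_L$ precisely when $\gamma^{-1}\gamma' \in \Gamma_L$, and that, with this accounting, the natural map from the disjoint union indexed by $\Gamma/\Gamma_L$ down to $V \cap L\Gamma/\Gamma$ is genuinely open rather than merely continuous and bijective. Once $\bar\varphi$ is known to be a homeomorphism, compactness of $L/(L \cap \Gamma)$ is equivalent to that of $L\Gamma/\Gamma$, closing the argument.
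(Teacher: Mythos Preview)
Your proposal is correct and follows exactly the same route as the paper: use the canonical map $L/(L\cap H)\to G/H$ with image $LH/H$ for the first claim, and for the discrete case assert that $L/(L\cap\Gamma)\to L\Gamma/\Gamma$ is a homeomorphism. The paper's proof is two sentences and simply states this map is a homeomorphism without justification; your covering-space argument supplies precisely the detail the paper omits, and your ``main obstacle'' is the step the paper takes for granted.
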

\begin{proof} 
To prove the first claim, we consider the map $L/ L\cap H \ra G/H$. Now if $\Gamma$ is discrete the map  $L/L \cap \Gamma \ra L \Gamma/ \Gamma \subset G/\Gamma$  is a homeomorphism. This implies the second part of the lemma.
\end{proof}

Here is a simple application:

\begin{lemma} \label{lem:lattice_in_component}
Let $\Gamma$ be a uniform subgroup of $G$. Let $G^{\, 0}$ denote the identity component of $G$. 
Then $\Gamma \cap G^{\, 0}$ is a uniform lattice in $G^{\, 0}$. 
\end{lemma}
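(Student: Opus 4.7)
The plan is to derive the lemma as a direct application of the second half of Lemma \ref{lem:uniformsubgs}, with $L = G^{\, 0}$. Since $G^{\, 0}$ is the identity component, it is a closed \emph{normal} subgroup of $G$, so that setup is legitimate. The discreteness half of the conclusion will be automatic, since $\Gamma \cap G^{\, 0}$ is a subgroup of the discrete group $\Gamma$; the substantive content is cocompactness.

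By Lemma \ref{lem:uniformsubgs} (applied to $L = G^{\, 0}$ and $H = \Gamma$), the quotient $G^{\, 0} / (G^{\, 0} \cap \Gamma)$ is compact if and only if $G^{\, 0}\, \Gamma$ is closed in $G$. First I would observe that $G^{\, 0}\, \Gamma$ is a subgroup of $G$: this uses normality of $G^{\, 0}$. Next, since $G^{\, 0}$ is open in $G$, each coset $g G^{\, 0}$ with $g \in \Gamma$ is open, so $G^{\, 0}\, \Gamma = \bigcup_{g \in \Gamma} g\, G^{\, 0}$ is open in $G$.

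The final step is the standard fact that an open subgroup of a topological group is also closed: its complement is the union of the remaining cosets, each of which is again open. Hence $G^{\, 0}\, \Gamma$ is closed in $G$, and Lemma \ref{lem:uniformsubgs} yields the compactness of $G^{\, 0} / (G^{\, 0} \cap \Gamma)$. Combined with the discreteness noted above, this gives that $\Gamma \cap G^{\, 0}$ is a uniform lattice in $G^{\, 0}$, completing the proof.

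There is no real obstacle here; the proof is essentially a bookkeeping application of the preceding lemma together with the elementary fact that open subgroups are closed. The only subtlety worth mentioning explicitly in the write-up is that $G$ is not assumed to have finitely many components, so one cannot shortcut the argument by passing to a finite quotient $G/G^{\, 0}$; the open/closed argument handles the general case uniformly.
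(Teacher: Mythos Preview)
Your proof is correct and follows essentially the same route as the paper: both reduce to Lemma~\ref{lem:uniformsubgs} with $L = G^{\,0}$ and verify that $G^{\,0}\Gamma$ is closed. The paper phrases this last step as ``$G/G^{\,0}$ is discrete, so the image of $\Gamma$ is discrete (hence closed)'', which is just the quotient-side rewording of your ``open subgroup is closed'' argument.
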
 
\begin{proof} Since $ G/ G^{\, 0}$ is discrete, the image of $\Gamma$ in $ G/ G^{\, 0}$ is discrete. 
Therefore $\Gamma G^{\, 0}$ is closed in $G$. By Lemma \ref{lem:uniformsubgs}, $\Gamma \cap G^{\, 0}$ 
is a uniform lattice of $G^{\, 0}$. 
 \end{proof} 

\begin{lemma} \label{lem:uniformsubgs2}
Let $\Gamma$ be a uniform lattice of $G$,
$L$ a closed normal subgroup of $G$, and 
$\nu: G \ra G/L$ the quotient homomorphism.  
Assume that  the identity component $\ac{\nu(\Gamma)}^{\, 0}$
of the closure of $\nu(\Gamma)$ is contained in a
compact normal subgroup $K$ of $G/L$. 
Then $\Gamma \cap \nu^{-1}(K)$ is a uniform lattice in
$ \nu^{-1}(K)$.
\end{lemma}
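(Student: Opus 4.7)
The plan is to reduce the claim to the second statement of Lemma \ref{lem:uniformsubgs}, applied to the closed normal subgroup $\nu^{-1}(K) \leq G$ and the discrete uniform lattice $\Gamma$. Since $\Gamma \cap \nu^{-1}(K)$ is automatically discrete, it will be a uniform lattice in $\nu^{-1}(K)$ precisely when $\nu^{-1}(K)\,\Gamma$ is closed in $G$. Because $L \leq \nu^{-1}(K)$, the set $\nu^{-1}(K)\,\Gamma$ is $\nu$-saturated, so $\nu^{-1}(K)\,\Gamma = \nu^{-1}\bigl(K \cdot \nu(\Gamma)\bigr)$, and by the quotient-topology definition it is closed in $G$ if and only if $K \cdot \nu(\Gamma)$ is closed in $G/L$. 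Thus I am reduced to proving the closedness of $K \cdot \nu(\Gamma)$ in $G/L$.

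Set $H := G/L$ and $\Delta := \nu(\Gamma)$. Since $K$ is compact, the product $K \cdot \cl{\Delta}$ is automatically closed in $H$, so it will suffice to establish the equality
$$
K \cdot \Delta \; = \; K \cdot \cl{\Delta}.
$$
Equivalently, writing $\pi : H \to H/K$ for the canonical projection (well-defined as $K$ is normal in $H$), I want to show $\pi(\Delta) = \pi(\cl{\Delta})$.

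The key step is to use the hypothesis $\cl{\Delta}^{\, 0} \leq K$ to prove that $\pi(\cl{\Delta})$ is \emph{discrete} in $H/K$. Because $K$ is compact, the quotient map $\pi$ is proper, in particular closed, so the restriction $\pi|_{\cl{\Delta}}$ descends to a topological embedding of $\cl{\Delta}/(\cl{\Delta} \cap K)$ onto $\pi(\cl{\Delta})$. Since $\cl{\Delta}^{\, 0} \leq \cl{\Delta} \cap K$, this quotient is in turn a quotient of the discrete component group $\cl{\Delta}/\cl{\Delta}^{\, 0}$, hence itself discrete. Consequently $\pi(\cl{\Delta})$ is discrete as a subspace of $H/K$.

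Finally, since $\Delta$ is dense in $\cl{\Delta}$, its image $\pi(\Delta)$ is dense in $\pi(\cl{\Delta})$; but the latter is discrete, forcing $\pi(\Delta) = \pi(\cl{\Delta})$, and hence $K\Delta = K\cl{\Delta}$ is closed. Tracing back through the reductions yields the uniform lattice property. I expect the main technical point to be the passage from the \emph{algebraic} discreteness of $\cl{\Delta}/(\cl{\Delta}\cap K)$ to the \emph{topological} discreteness of $\pi(\cl{\Delta})$ as a subspace of $H/K$, which depends on the closedness of $\pi$ and ultimately on the compactness of $K$.
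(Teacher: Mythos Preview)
Your proof is correct and follows essentially the same route as the paper's: both reduce via Lemma~\ref{lem:uniformsubgs} to showing $K\,\nu(\Gamma)$ is closed in $G/L$, and both do so by establishing $K\,\nu(\Gamma) = K\,\ac{\nu(\Gamma)}$. The only difference is cosmetic: the paper obtains this equality in one line from the fact that $\ac{\nu(\Gamma)} = \ac{\nu(\Gamma)}^{\,0}\cdot\nu(\Gamma)$ (the identity component being open in the closure), whereas you reach the same conclusion by passing to $H/K$ and arguing that $\pi(\ac{\nu(\Gamma)})$ is discrete.
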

\begin{proof}  By Lemma \ref{lem:uniformsubgs}, it is
enough to show that $\nu^{-1}(K) \Gamma$ is closed 
in $G$. By assumption,  $T= \ac{\nu(\Gamma)}^{\, 0}$ is a subgroup
of $K$.  As 
${\ac{\nu(\Gamma)}} = T \nu(\Gamma)$, 
we observe that 
$K \nu(\Gamma) = K  \ac{\nu(\Gamma)}$. 
Since $K$ is compact, so is $K/ K \cap  \ac{\nu(\Gamma)}$.
Clearly, $\ac{\nu(\Gamma)}$ is a uniform
subgroup of $G/L$. Hence, by  the first part of Lemma \ref{lem:uniformsubgs}, 
$K \ac{ \nu(\Gamma)}$ is closed in $G/L$. 
Therefore, $\nu^{-1}(K) \Gamma = \nu^{-1}(K \nu(\Gamma))$ is closed 
in $G$.
 \end{proof}

\subsubsection{Levi decomposition} \label{sect:Levi_dec}
We will assume now that  $G$ is a connected Lie group. 
Let $\fR$ be the
solvable radical of $G$  (that is, $\fR$ is the maximal 
connected normal solvable subgroup of $G$). 
Then $G$ admits a Levi
decomposition $$ G= \fR\cdot \fS \; .$$ 
The Levi subgroup $\fS$ is a semisimple (not necessarily closed) immersed subgroup of $G$ 
and the intersection $\fR \cap \fS$ is a totally disconnected subgroup of $\fR$ and central in $\fS$.
We let  $\fK$ denote the 
maximal compact and \emph{connected} normal 
subgroup of $\fS$.  
For the following important fact, see \cite[Chapter 4, Theorem 1.7]{OVii} and \cite{Starkov}: 

\begin{proposition}[Bieberbach-Auslander-Wang Theorem] 
\label{prop:key-correct}
Let $\Gamma$ be a uniform
lattice in $G$. 
Then the intersection
$(\fR \, \fK) \cap \Gamma$ is a uniform lattice in 
$\fR  \, \fK$. In
particular, in the associated exact sequence
$$ 
 1\ra \fR\,  \fK \ra
 G \, 
 {\ra} \, S \ra 1 \; ,
 $$
the group $\Gamma$ projects to a uniform lattice in
a semisimple Lie group $S$ of noncompact type.
\end{proposition}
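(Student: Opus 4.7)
The plan is to reduce the proposition to an application of Lemma \ref{lem:uniformsubgs2}, with the classical Auslander density theorem supplying the essential non-elementary input. Let $\pi : G \to G/\fR$ denote the quotient projection. Since $\fR$ is the solvable radical of the connected Lie group $G$, the target $G/\fR$ is semisimple. From the Levi decomposition $G = \fR \cdot \fS$, the restriction $\pi|_\fS$ is surjective with central discrete kernel $\fS \cap \fR$, so the image $\pi(\fK)$ coincides with the maximal compact connected normal subgroup $\bar{\fK}$ of $G/\fR$. In particular $\pi^{-1}(\bar{\fK}) = \fR\fK$ is a closed normal subgroup of $G$, and the quotient $S = (G/\fR)/\bar{\fK}$ is semisimple with no compact factors.

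The crucial step is the containment
\[ \overline{\pi(\Gamma)}^{\,0} \; \subseteq \; \bar{\fK}, \]
which I would invoke from the Auslander density theorem in the form of \cite[Ch.\,4, Thm.\,1.7]{OVii} or \cite{Starkov}: for a lattice in a connected Lie group with radical $\fR$, the identity component of the closure of the projection to $G/\fR$ is a connected solvable subgroup of the semisimple group $G/\fR$, and on such a quotient a normaliser argument forces it to lie inside the maximal compact normal factor $\bar{\fK}$. This is the one non-elementary ingredient; I would rely on it as cited rather than attempting a new proof.

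Once this containment is in hand, Lemma \ref{lem:uniformsubgs2} applied with $L = \fR$ and $K = \bar{\fK}$ directly yields that $\Gamma \cap \fR\fK$ is a uniform lattice in $\fR\fK$. The first part of Lemma \ref{lem:uniformsubgs} then shows that $\Gamma \cdot \fR\fK$ is closed in $G$, so the image of $\Gamma$ in $S = G/\fR\fK$ is discrete, while cocompactness is inherited from the cocompactness of $\Gamma$ in $G$. This gives the desired uniform lattice in the semisimple Lie group $S$ of noncompact type. The main obstacle is the Auslander containment itself; once that is granted, everything else is a routine application of the uniform-subgroup lemmas of Section \ref{sec:prel}.
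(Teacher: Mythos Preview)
The paper does not actually supply a proof of this proposition; it is stated as a known fact with references to \cite[Chapter 4, Theorem 1.7]{OVii} and \cite{Starkov}. Your outline is correct and is precisely the argument the authors appear to have in mind: Lemma~\ref{lem:uniformsubgs2} is placed immediately before the proposition and is tailored for exactly this application, and the paper later refers (in the proof of Theorem~\ref{thm:Theta_dis}) to ``the proof of Proposition~\ref{prop:key-correct}'' when invoking that $\overline{\psi(\Gamma_0)}^{\,0}$ is a compact torus---which is the Auslander containment you isolate. So your reduction to Lemma~\ref{lem:uniformsubgs2} via the Auslander density statement is the intended route, not a different one.
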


%
%

\subsubsection{Straightening of embeddings} \label{sect:straightening}
Let $\Gamma$ be a (uniform) lattice in a connected Lie group $G$ with Levi decomposition 
$$ G = \fR  \cdot  \fS   . $$ 
It is  a basic technique to modify a lattice $\Gamma$ in a controlled way to obtain another embedding of 
$\Gamma$ to $G$ which has possibly better properties.  
A useful result in this direction is the following 
Mostow deformation Theorem: 

\begin{proposition}[Deformation of lattices, \cf \mbox{\cite[Theorem 5.5 (b)]{Mostow3}}] \label{prop:Mostow_straight}
Let $G$ be a linear Lie group, and $\fD$ its maximal reductive normal connected subgroup, 
and\/ $\fF$ its maximal normal connected subgroup which does not contain 
any non-compact simple subgroups normal in $G$.
Then a finite index subgroup of\/  $\Gamma$ can be deformed into 
a uniform lattice $\Gamma'$ of $G$, where $$  \Gamma' = ( \Gamma' \cap \fF) \cdot  (\Gamma' \cap \fD  ) .$$ 
\end{proposition}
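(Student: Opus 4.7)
The plan is to reduce to a situation where Mostow's structural analysis of lattices in linear Lie groups applies, and then construct the deformation using a cohomological splitting argument. First, I would pass to a finite index subgroup $\Gamma_{0} \leq \Gamma$ so that $\Gamma_{0} \subseteq G^{0}$ (by Lemma \ref{lem:lattice_in_component}) and so that $\Gamma_{0}$ is torsion-free and its image under every algebraic representation is Zariski-connected. Next, I would unpack the roles of $\fD$ and $\fF$: writing a Levi decomposition $G^{0} = \fR \cdot \fS$ with $\fS = \fS_{nc} \cdot \fS_{c}$ (non-compact times compact semisimple factors), the subgroup $\fD$ is generated by $\fS_{nc}$ together with the maximal central torus of $\fR$, and $\fF$ is the complementary normal subgroup containing the unipotent radical and the compact-type normal semisimple part. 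In particular $G^{0} = \fF \cdot \fD$ and $\fF \cap \fD$ is contained in the center of $\fD$.

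Second, I would invoke the Bieberbach--Auslander--Wang result (Proposition \ref{prop:key-correct}) for the quotient $G / \fF$: the image of $\Gamma_{0}$ is a uniform lattice in $G/\fF$, which is (up to finite quotients) a semisimple Lie group of non-compact type isogenous to $\fD /(\fF \cap \fD)$. By Selberg's lemma and Mostow's arithmeticity-free structural lemmas one may assume the projected lattice is contained in the image of $\fD$. The lift of this projected lattice to $\fD$ produces a candidate $\Lambda_{\fD} \leq \fD$; the obstruction to writing a finite index subgroup of $\Gamma_{0}$ as $(\Gamma_{0} \cap \fF) \cdot \Lambda_{\fD}$ is measured by a class in $H^{1}(\Lambda_{\fD}, \fF/(\fF\cap\fD))$ (or, more accurately, in the non-abelian cohomology set).

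Third, I would carry out the deformation. Since $\fF/(\fF \cap \fD)$ is built from a nilpotent piece and a compact semisimple piece, and $\Lambda_{\fD}$ is a uniform lattice in a semisimple group of non-compact type, one uses Matsushima's vanishing together with the Weil--Mostow local rigidity machinery to show the relevant cohomology is trivial in the direction transverse to $\fF$. This furnishes a $1$-parameter family of homomorphisms $\alpha_{t} \colon \Gamma_{0} \to G$, starting at the inclusion, whose endpoint $\alpha_{1}$ satisfies $\alpha_{1}(\Gamma_{0}) \cap \fD \supseteq \Lambda_{\fD}$ and $\alpha_{1}(\Gamma_{0}) = (\alpha_{1}(\Gamma_{0}) \cap \fF) \cdot (\alpha_{1}(\Gamma_{0}) \cap \fD)$. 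By Weil's openness of the lattice property, $\Gamma' := \alpha_{1}(\Gamma_{0})$ is still a uniform lattice in $G$ for $t$ small; one then extends along a compact arc, using the fact that the deformation stays within the image of Hom$(\Gamma_0, G)$ and preserves cocompactness because the projection to $G/\fF$ remains a fixed uniform lattice throughout.

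The main obstacle I anticipate is the cohomological vanishing step. One must identify a module structure on the appropriate Lie algebra quotient so that the action factors through the semisimple group $\fD$, and then verify that the representation has no trivial sub-quotients on the relevant piece of $\fF$. The compact-type simple factors of $\fF$ and the center of the unipotent radical require separate treatment, the former handled by averaging (yielding trivial $H^{1}$ for compact group actions), the latter handled inductively along the descending central series of the unipotent radical using semisimple representation theory. Once this vanishing is in place, the conjugation cocycle can be trivialised by an element whose exponential path produces the desired deformation.
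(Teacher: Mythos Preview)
The paper does not supply a proof of this proposition at all: it is stated as a quotation of Mostow's result \cite[Theorem 5.5 (b)]{Mostow3} and is used later as a black box in the proof of Theorem~\ref{thm:non_locally_homogeneous}. There is therefore nothing in the paper to compare your proposal against.

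That said, your sketch is in the right spirit of Mostow's original argument, but it is too loose to stand as a proof. Two concrete issues: first, your identification of $\fD$ is off --- in Mostow's setting $\fD$ is the maximal reductive normal connected subgroup, so it contains \emph{all} semisimple normal factors (compact and non-compact) together with the central torus, not just $\fS_{nc}$; the decomposition $G^{0} = \fF \cdot \fD$ and the description of $\fF \cap \fD$ need to be reworked accordingly. Second, the cohomological vanishing you invoke does not follow from Matsushima's theorem in this generality: Mostow's deformation in \cite{Mostow3} proceeds instead by a direct inductive construction along the nilpotent radical, using the density of $\Gamma \cap \fN$ in $\fN$ (his Lemma~3.9) and an explicit averaging/straightening over the semisimple quotient, rather than an abstract $H^{1}$-vanishing. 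If you want to write out a genuine proof, you should follow Mostow's paper directly rather than attempt to reconstruct it from rigidity folklore.
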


\subsection{Proper actions}
Let $G$ be a Lie group which acts 
on a locally compact Hausdorff space $X$.  
For any subset $A$ of $X$, we
put \[
\zeta_{A}(G)=\{g \in G\, |\, g \cdot A \cap
 A \neq\emptyset\} .
\]
The action of $G$ on $X$ is called \emph{proper} if for all compact 
subsets $\kappa \subset X$ the set $\zeta_{\kappa}(G)$ 
is compact. 

The following lemma is concerned with quotients of
proper actions. 

\begin{lemma}\label{lem:properquotient}
Let $G$ be a Lie group which acts properly on a smooth
manifold $X$. Let $R$ be a closed normal subgroup of $G$.
Then the quotient group $L= G/R$ acts properly on 
the quotient space $W=X/R$.
\end{lemma}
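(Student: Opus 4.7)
The plan is to unwind the definition of a proper action on $W=X/R$. Given a compact set $\kappa \subseteq W$, we must show that
\[
\zeta_\kappa(L) \, = \, \{\, gR \in L \, : \, gR \cdot \kappa \cap \kappa \neq \emptyset \,\}
\]
is compact. The first preliminary point is that since $R$ is closed in $G$ it acts properly on $X$, so the orbit space $W=X/R$ is a locally compact Hausdorff space and the quotient map $\pi \colon X \to W$ is a continuous open surjection. In particular the induced action of $L=G/R$ on $W$ is continuous.

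The key technical step is to replace $\kappa$ by a compact subset of $X$ that projects onto it. Since $\pi$ is not proper in general (unless $R$ is compact), I cannot lift $\kappa$ by preimage, but can exploit openness of $\pi$. For each $y \in \kappa$ choose $x \in \pi^{-1}(y)$ and a relatively compact open neighbourhood $U_x \subset X$; then $\pi(\overline{U_x})$ is a compact neighbourhood of $y$ in $W$. By compactness of $\kappa$ finitely many such sets cover it, and the corresponding union $\tilde\kappa := \bigcup_{i} \overline{U_{x_i}}$ is a compact subset of $X$ with $\pi(\tilde\kappa) \supseteq \kappa$.

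Next I establish the inclusion $\zeta_\kappa(L) \subseteq p(\zeta_{\tilde\kappa}(G))$, where $p \colon G \to L$ is the projection. Given $gR \in \zeta_\kappa(L)$, pick $y_1,y_2 \in \kappa$ with $gR \cdot y_1 = y_2$, and lift them to $\tilde y_1, \tilde y_2 \in \tilde\kappa$. Then $\pi(g\tilde y_1)=\pi(\tilde y_2)$, so $g\tilde y_1 = r \tilde y_2$ for some $r\in R$, which gives $r^{-1}g \in \zeta_{\tilde\kappa}(G)$ and $gR = p(r^{-1}g)$. By properness of the $G$-action the set $\zeta_{\tilde\kappa}(G)$ is compact in $G$, so its image under the continuous map $p$ is compact in $L$.

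Finally, $\zeta_\kappa(L)$ is closed in $L$: this follows from a standard sequential/net argument using that $\kappa$ is compact, $W$ is Hausdorff, and the action $L \times W \to W$ is continuous. A closed subset of the compact set $p(\zeta_{\tilde\kappa}(G))$ is compact, proving the claim. The only delicate point is the lifting step, where one must avoid assuming properness of $\pi$; everything else is formal manipulation of the definition of properness.
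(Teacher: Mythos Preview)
Your proof is correct and follows the same overall strategy as the paper: given a compact $\kappa\subseteq W$, produce a compact subset of $X$ that controls it, and use properness of the $G$-action upstairs. The implementation differs in one point worth noting.

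The paper invokes the slice theorem for the proper $R$-action on $X$ to write the \emph{full} preimage $\mathcal K=\pi^{-1}(\kappa)$ as $R\cdot\kappa_0$ for some compact $\kappa_0\subseteq X$. Because $R$ is normal this yields the exact identity $\zeta_{\mathcal K}(G)=\zeta_{\kappa_0}(G)\,R$, and since $\pi^{-1}(\kappa)=\mathcal K$ one gets the \emph{equality} $\zeta_{\kappa}(L)=p(\zeta_{\kappa_0}(G))$ outright, with no need for the final closedness argument.

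Your route is more elementary: you avoid the slice theorem entirely and only use local compactness of $X$ together with openness of $\pi$ to build a compact $\tilde\kappa$ with $\pi(\tilde\kappa)\supseteq\kappa$. The price is that you obtain only the inclusion $\zeta_{\kappa}(L)\subseteq p(\zeta_{\tilde\kappa}(G))$, which forces the extra step of checking that $\zeta_{\kappa}(L)$ is closed. That step is routine (and you handle it correctly), so the trade-off is a slightly longer argument in exchange for lighter machinery. Either approach is perfectly adequate here.
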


\begin{proof} Choose an arbitrary
compact subset $\bar \kappa$ from $W$ and 
consider its preimage $\mathcal K$ 
in $X$.  Since $R$ acts properly on $X$, there
exists a compact subset $\kappa$ of $X$ such that
$\mathcal K = R \cdot \kappa$. (Use the slice theorem
\cite{Palais,Koszul} for the action of $R$ on the manifold $X$.) 
It follows that $$ \zeta_{\mathcal K}(G) =  \zeta_{\kappa}(G) R .$$
Note that 
$\zeta_{\mathcal K}(G)$ projects onto  $\zeta_{\bar \kappa}(L)$
under the natural homomorphism $G \ra L$. 
Since $\zeta_{\bar \kappa}(L)$ is the image of the
compact set $\zeta_{\kappa}(G)$, $\zeta_{\bar \kappa}(L)$ is compact. 
\end{proof}

\subsubsection{Isometries of Riemannian manifolds} \label{oliverrework}
Let $X$ be a Riemannian manifold. 
By a theorem of Myers and Steenrod \cite{Myers-Steenrod}, the group \[
G = {\rm Isom}(X) 
\]
of isometries of $X$ acts properly on $X$.
For $x \in M$, let
\[
G_{x} = \{ h \in G \mid h x = x \}
\]
denote  the isotropy group at $x$. 
%
%
%
Let $\Gamma$ be a discrete
subgroup of ${\rm Isom}(X)$. 
\begin{lemma}\label{lem:isometry_lattices} Assume that 
$X/\Gamma$ is compact. Then the  group $\Gamma$ is a uniform lattice in ${\rm Isom}(X)$. In particular, 
$\Gamma \cap {\rm Isom}(X)^0$ is a uniform lattice in the connected component ${\rm Isom}(X)^0$. 
\end{lemma}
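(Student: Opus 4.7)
My plan is to deduce the lemma from the properness of the isometry action, together with the earlier Lemma \ref{lem:lattice_in_component} on intersections with the identity component.

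For the first assertion, I would set $G = \Isom(X)$, which acts properly on $X$ by the Myers--Steenrod theorem. The goal is to exhibit a compact subset $C \subset G$ with $G = \Gamma \cdot C$. Since $X/\Gamma$ is compact, I can pick a compact set $K \subset X$ with $\Gamma \cdot K = X$, and I may assume $K$ is nonempty and contains a chosen basepoint $x_0 \in X$. The key observation is that the ``orbit-hitting'' set
\[
C \; = \; \{ h \in G \mid h \cdot x_0 \in K \}
\]
is contained in $\zeta_K(G)$: if $h x_0 \in K$ and $x_0 \in K$, then $h K \cap K \supset \{h x_0\} \cap K \neq \emptyset$. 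By properness of the $G$-action on $X$, $\zeta_K(G)$ is compact, and since $C$ is obviously closed in $G$, $C$ is compact. Finally, for any $g \in G$ the point $g x_0 \in X = \Gamma \cdot K$ can be written as $g x_0 = \gamma k$ with $\gamma \in \Gamma$, $k \in K$, so $\gamma^{-1} g \in C$, giving $g \in \Gamma \cdot C$. Hence $G/\Gamma$ is the continuous image of the compact set $C$, so it is compact. As $\Gamma$ is discrete by hypothesis, this proves $\Gamma$ is a uniform lattice in $\Isom(X)$.

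For the second assertion, I would simply invoke Lemma \ref{lem:lattice_in_component} with $G = \Isom(X)$ and the uniform lattice $\Gamma$ just obtained: that lemma yields at once that $\Gamma \cap \Isom(X)^{0}$ is a uniform lattice in $\Isom(X)^{0}$.

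The only mildly nontrivial step is the first one, and even there the main point is simply to unwind the definition of properness and apply it to a single compact ``fundamental'' set $K$ for $\Gamma$. No issue with torsion or connectedness of $G$ arises, since properness of the $G$-action is built into the Riemannian setup and the statement requires only that $\Gamma$ is discrete and cocompact on $X$.
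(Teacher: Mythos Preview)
Your proof is correct. Both you and the paper finish the second assertion by invoking Lemma~\ref{lem:lattice_in_component}, so the only difference lies in how the compactness of $\Isom(X)/\Gamma$ is obtained. The paper argues via the double-coset fibration: the map $X/\Gamma \to X/\Isom(X)$ has fiber over $[x]$ homeomorphic to $\Isom(X)_x \backslash \Isom(X)/\Gamma$, and since these fibers are closed in the compact space $X/\Gamma$ and the stabilizers $\Isom(X)_x$ are compact (by properness), compactness of $\Isom(X)/\Gamma$ follows. You instead choose a compact set $K$ with $\Gamma\cdot K = X$ and use properness once, to see that the return set $C=\{h:hx_0\in K\}\subset \zeta_K(G)$ is compact and satisfies $G=\Gamma\cdot C$. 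Your approach is a touch more elementary---it avoids the orbit-space map and the identification of its fibers---while the paper's argument has the virtue of working uniformly without singling out a basepoint. One small remark: from $G=\Gamma\cdot C$ what you directly get is that $\Gamma\backslash G$ is the image of $C$; but of course inversion gives a homeomorphism $\Gamma\backslash G \cong G/\Gamma$, so the conclusion is unaffected.
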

\begin{proof}
As ${\rm Isom}(X)$ acts properly, the quotient $X/{\rm Isom}(X)$ is
Hausdorff.  For the natural map $X/\Gamma\lra X/{\rm Isom}(X)$,
each fiber at a point $[x]\in X/{\rm Isom}(X)$ is homeomorphic to 
$\displaystyle {\rm Isom}(X)_x\backslash {\rm Isom}(X)/\Gamma$, and it is closed in $X/\Gamma$. As the
stabilizer ${\rm Isom}(X)_x$ is always compact and $X/\Gamma$ is
compact, it follows that ${\rm Isom}(X)/\Gamma$ is compact. 
In view of Lemma \ref{lem:lattice_in_component} this shows that $\Isom(X)^{0} \cap \Gamma$ is a uniform subgroup
of $\Isom(X)^{0}$. 
\end{proof}

\section{Smooth crystallographic actions} \label{sect:smooth_crystallographic}

Let $X$ be a contractible smooth manifold 
and $\Gamma$ a properly discontinuous group of diffeomorphisms of $X$. If the quotient space $$ X / \, \Gamma$$ is compact, we call the action of $\Gamma$ on $X$ crystallographic. In this section we are mostly concerned with the action of compact Lie groups on $X/\Gamma$.

\subsection{Cohomology of groups acting on acyclic spaces} 
Let $\Gamma$ be a group which has a
properly discontinuous cellular action  
on a CW-complex $X$. 
Let $R = \bbZ$ or $R= \bbZ_{p}$, 
for some prime $p$. The space $X$ is called acyclic over $R$ if
$H_i(X, R) = \{ 0 \}$, $i \neq 0$, and $H_0(X, R) = R$. 
(If  $R= \bbZ$ then $X$ is called acyclic.)
Let $R\,  \Gamma$ denote the group ring for $\Gamma$ 
with $R$-coefficients. We have the following important 
observation: 

\begin{proposition}  \label{prop:groupring_coho}
Assume that $X$ is acyclic over $R$ and $X/\Gamma$ is compact 
then $H^*(\Gamma, R \, \Gamma) = H^*_{c}(X,R)$. 
\end{proposition}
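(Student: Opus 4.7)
The plan is to build a resolution of the trivial $R\Gamma$-module $R$ out of the cellular structure of $X$ and then to identify the resulting $\Hom$-complex with the compactly supported cellular cochain complex of $X$.

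First I would analyze the cellular chain complex $C_*(X;R)$ as a complex of $R\Gamma$-modules. Since $\Gamma$ acts cellularly and properly discontinuously, every cell $\sigma$ has finite stabilizer $\Gamma_\sigma$, and one gets a direct sum decomposition $C_n(X;R)=\bigoplus_{\sigma\in\Sigma_n} R[\Gamma/\Gamma_\sigma]$, indexed by a set $\Sigma_n$ of representatives of $\Gamma$-orbits on the $n$-cells. The hypothesis that $X/\Gamma$ is compact ensures that each $\Sigma_n$ is finite. Acyclicity of $X$ over $R$ means that the augmented complex $C_*(X;R)\to R\to 0$ is exact.

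Next I would identify the cochain complex $\Hom_{R\Gamma}(C_*(X;R),R\Gamma)$ with the compactly supported cellular cochain complex $C_c^*(X;R)$. An equivariant $R$-linear map $f\colon C_n(X;R)\to R\Gamma$ is determined by its values $f(\sigma_i)=\sum_\gamma a_{i,\gamma}\,\gamma$ on the finitely many orbit representatives $\sigma_i\in\Sigma_n$, with only finitely many $a_{i,\gamma}$ non-zero for each $i$. The recipe $\tilde f(\gamma\sigma_i):=a_{i,\gamma^{-1}}$ then defines an $R$-cochain on $X$ supported on only finitely many cells, hence compactly supported; the inverse assignment is clear, and a direct check shows that this bijection is compatible with the coboundaries.

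Finally, to read off $H^*(\Gamma;R\Gamma)=\mathrm{Ext}^*_{R\Gamma}(R,R\Gamma)$ as the cohomology of $\Hom_{R\Gamma}(C_*(X;R),R\Gamma)$, one needs $C_*(X;R)$ to be a projective $R\Gamma$-resolution of $R$. When $\Gamma$ is torsion-free, which is the case of interest in the applications, the stabilizers $\Gamma_\sigma$ are trivial and each $C_n(X;R)$ is a free $R\Gamma$-module of finite rank, so the identification immediately gives $H^*(\Gamma;R\Gamma)\cong H_c^*(X;R)$. The main obstacle is the case of general $\Gamma$ with torsion, where $C_n(X;R)$ need no longer be projective; this is handled either by choosing $R$ such that $|\Gamma_\sigma|$ is invertible (so that the induced modules $R[\Gamma/\Gamma_\sigma]$ become projective), or by passing to a torsion-free finite-index subgroup and invoking a transfer argument, or—most cleanly—by forming the double complex of a genuine free $R\Gamma$-resolution $F_*\to R$ tensored against $C_*(X;R)$ and comparing the two spectral sequences to identify the common abutment with both $H^*(\Gamma;R\Gamma)$ and $H_c^*(X;R)$.
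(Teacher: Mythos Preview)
Your proposal is correct and is essentially the standard argument the paper has in mind: the paper does not give its own proof of this proposition but simply refers to Brown \cite[VIII, \S 7]{Brown} and Davis \cite[Lemma F.2.2]{Davis}, remarking that the argument there for $R=\ZZ$ carries over verbatim to $R=\ZZ_p$. What you have written is an outline of exactly that argument.

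One small remark on your treatment of the torsion case. Of your three fixes, option (a) is unavailable here since $R$ is prescribed, and option (b) would require virtual torsion-freeness of $\Gamma$, which is not part of the hypotheses. Your option (c) is the right one, and the reason the isotropy spectral sequence collapses is worth making explicit: for each finite stabilizer $\Gamma_\sigma$, the restriction of $R\Gamma$ to $R\Gamma_\sigma$ is a \emph{free} $R\Gamma_\sigma$-module (a choice of coset representatives gives a basis), so $H^q(\Gamma_\sigma; R\Gamma)=0$ for $q>0$ regardless of whether $|\Gamma_\sigma|$ is invertible in $R$. This is precisely what makes the coefficient module $R\Gamma$ special and forces the spectral sequence to degenerate to $H^*_c(X;R)$ on one side and $H^*(\Gamma; R\Gamma)$ on the other. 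Your step (2) identification also needs a word about cells whose stabilizer acts by an orientation character, but this only twists the induced module and does not affect the argument.
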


Here, $H^*(\Gamma, R \, \Gamma)$ denotes the group cohomology of $\Gamma$ with $R \,  \Gamma$ coefficients (see \cite{Brown} for definition)
and $H^*_{c}(X,R)$ denotes (cellular) cohomology of 
$X$ with compact supports. 
%
A proof of Proposition \ref{prop:groupring_coho}
for integral coefficients (that is, $R = \bbZ$) may be found in \cite[VIII, \S 7]{Brown} 
(see also  \cite[Lemma F.2.2]{Davis}).
The course of proof runs through almost 
verbatim using $R$-coefficients. 

\smallskip 
Every proper smooth action on a smooth manifold $X$ has an invariant
simplicial structure. In particular, if $\Gamma$ acts properly discontinuously and smoothly on $X$ then $X$ can be given the structure of a simplicial complex with simplicial action, \cf  
\cite[Theorem II]{Illman}.
We deduce:

\begin{theorem} \label{thm:groupring_coho}
Let $\Gamma$ be a group which acts smootly and properly discontinuously with 
compact quotient on an $n$-dimensional 
$R$-orientable smooth manifold $X$. If $X$ is acyclic over $R$ 
then $H^{n}(\Gamma, \Gamma R) = R$ and 
$H^i(\Gamma, \Gamma R) =
\{ 0 \}$, $i \neq n$. 
\end{theorem}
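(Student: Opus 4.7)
The plan is to combine Proposition \ref{prop:groupring_coho} with Poincar\'e--Lefschetz duality for an $R$-orientable smooth manifold, so that the computation of $H^{*}(\Gamma, R\,\Gamma)$ reduces to a computation of the (co)homology of $X$.

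First, I would verify that Proposition \ref{prop:groupring_coho} applies in our setting. By Illman's equivariant triangulation theorem (\cite{Illman}), any smooth proper action of a Lie group on a smooth manifold admits an invariant simplicial structure; applied to the discrete group $\Gamma$ acting smoothly and properly discontinuously on $X$, this equips $X$ with the structure of a $\Gamma$-CW complex with compact quotient. Since $X$ is acyclic over $R$, Proposition \ref{prop:groupring_coho} then yields the identification
\[
H^{*}(\Gamma, R\,\Gamma)\;\cong\; H^{*}_{c}(X,R).
\]

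Next, I would invoke Poincar\'e--Lefschetz duality for the $R$-oriented smooth $n$-manifold $X$ (which is non-compact in general, so one really needs the compactly supported version). This gives a canonical isomorphism
\[
H^{i}_{c}(X,R)\;\cong\; H_{n-i}(X,R)
\]
for all $i$. Since $X$ is acyclic over $R$, we have $H_{0}(X,R)=R$ and $H_{j}(X,R)=0$ for $j\neq 0$. Substituting gives $H^{n}_{c}(X,R)=R$ and $H^{i}_{c}(X,R)=0$ for $i\neq n$, hence $H^{n}(\Gamma, R\,\Gamma)=R$ and $H^{i}(\Gamma, R\,\Gamma)=0$ for $i\neq n$, as claimed.

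The main (mild) obstacle is really just a bookkeeping point rather than a deep step: one must make sure that the cellular $H^{*}_{c}$ appearing in Proposition \ref{prop:groupring_coho} coincides with the singular/sheaf-theoretic compactly supported cohomology to which Poincar\'e duality is usually stated, and that the $R$-orientability hypothesis (rather than just $\ZZ$-orientability) is what is needed when $R = \ZZ_{p}$. Both points are standard: for a smooth manifold any two reasonable cohomology theories agree, and the duality isomorphism $H^{i}_{c}(X,R)\cong H_{n-i}(X,R)$ holds whenever $X$ carries an $R$-orientation, which is exactly our hypothesis (note that in the case $R=\ZZ_{2}$ no orientability assumption is even needed). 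Aside from verifying these identifications, no further work is required.
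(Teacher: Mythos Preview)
Your proposal is correct and follows essentially the same route as the paper's own proof: apply Illman's equivariant triangulation theorem to put oneself in the cellular setting of Proposition~\ref{prop:groupring_coho}, then use Poincar\'e duality $H^{i}_{c}(X,R)\cong H_{n-i}(X,R)$ together with $R$-acyclicity of $X$. The only difference is cosmetic---the paper invokes duality first and Illman second---and your additional remarks on the comparison of cohomology theories and on $R$-orientability are sound elaborations rather than departures from the argument.
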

\begin{proof} By Poincar\'e duality for noncompact manifolds  
(cf.\ \cite[Theorem 26.6]{Greenberg}), 
$H^i_{c}(X, R) = H_{n-i}(X,R)$.
As $X$ is acyclic, we deduce
that $H^{n}_{c}(X, R)=R$ and $H^i_c(X,R) =\{0\}$, $i \neq n$. 
By Illman's result \cite{Illman}, we may assume that $X$ and the action of $\Gamma$
are cellular.  Therefore, Proposition \ref{prop:groupring_coho} implies the theorem. 
\end{proof}

\subsubsection{Smooth actions on manifolds}
Let $\Gamma$ be a group which acts properly discontinuously and smoothly with
compact quotient on the smooth manifold $X$. Under the assumption that $X$ is
acyclic (over the integers) we show that there are no compact group actions on $X$ 
which are normalized by $\Gamma$. Results of this type generalize the well known fact 
(see \cite{CR})  that the only compact Lie groups which act on compact aspherical 
manifolds are tori. 

\begin{lemma} \label{lem:torus_trivial}
Let $T$ be a compact $k$-torus  acting faithfully and smoothly on $X$, where $X$ is an orientable acyclic manifold. Assume that the action is normalized by $\Gamma$. 
Then $T =\{ 1 \}$. 
\end{lemma}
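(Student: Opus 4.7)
The plan is to assume $T \ne \{1\}$ and to produce two acyclic smooth manifolds of \emph{different} dimensions on which a single finite-index subgroup of $\Gamma$ acts smoothly, properly discontinuously, and cocompactly; this will contradict the uniqueness of the top degree provided by Theorem \ref{thm:groupring_coho}.

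The first of these manifolds is $X$ itself. For the second, pick an element $\sigma \in T$ of order $2$, which exists because $T$ is a non-trivial torus. Since $T$ is compact, the $T$-action on $X$ admits an invariant Riemannian metric, so $\sigma$ acts as an isometry of finite order and its fixed set $F = X^{\sigma}$ is a smooth closed totally geodesic submanifold of $X$. By the Smith fixed point theorem applied to the $\bbZ_2$-acyclic manifold $X$, the space $F$ is $\bbZ_2$-acyclic; in particular $F$ is non-empty and connected, hence a single smooth connected submanifold of some dimension $n'$. Faithfulness of the $T$-action gives $F \ne X$, and since a smooth self-diffeomorphism fixing an open subset of a connected manifold is the identity on the whole manifold, this forces $n' < n$.

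To make a single group act on both manifolds, I use that $\Gamma$ normalizes $T$ and therefore permutes by conjugation the finite subgroup $T[2] \cong (\bbZ_2)^k$ of $2$-torsion elements of $T$. Let $\Gamma_0 = \{\gamma \in \Gamma \mid \gamma\sigma = \sigma\gamma\}$ be the stabilizer of $\sigma$ under this conjugation action; it has finite index in $\Gamma$. Elements of $\Gamma_0$ commute with $\sigma$ and hence preserve $F$, and the restricted action of $\Gamma_0$ on $F$ is smooth, properly discontinuous (being the restriction of a properly discontinuous action to a closed invariant subset), and cocompact (because $F/\Gamma_0$ is closed in the compact space $X/\Gamma_0$, the quotient map $X \to X/\Gamma_0$ being open). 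Now Theorem \ref{thm:groupring_coho} with $R = \bbZ_2$ applies to both $\Gamma_0$-actions, as every smooth manifold is $\bbZ_2$-orientable: from the action on $X$ it yields $H^{i}(\Gamma_0, \bbZ_2 \Gamma_0) = \bbZ_2$ precisely when $i = n$, while from the action on $F$ it yields the same with $n$ replaced by $n' < n$. These conclusions are incompatible, so $T = \{1\}$.

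The main point of care is the verification that $F$ has strictly smaller dimension than $X$ and that the auxiliary $\Gamma_0$-action on $F$ is genuinely smooth, properly discontinuous, and cocompact; once this setup is in place, the conclusion is a direct combination of Smith theory with the dimension constraint provided by Theorem \ref{thm:groupring_coho}.
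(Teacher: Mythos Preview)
Your argument is correct and follows the same overall strategy as the paper: use Smith theory to produce an acyclic fixed submanifold, then invoke Theorem~\ref{thm:groupring_coho} to force it to have the same dimension as $X$. The technical choices differ, though. The paper takes the full fixed set $X^T$, which is $\bbZ$-acyclic by Smith theory for torus actions and \emph{orientable} by a further result of Bredon; since all of $\Gamma$ normalizes $T$, it acts directly on $X^T$ and Theorem~\ref{thm:groupring_coho} with $R=\bbZ$ applies without passing to a subgroup. You instead fix a single involution $\sigma\in T$ and work over $\bbZ_2$. This buys you two simplifications---$\bbZ_2$-orientability is automatic, and Smith theory for a single $\bbZ_2$-action is more elementary than for tori---at the price of having to descend to the finite-index centralizer $\Gamma_0$ of $\sigma$. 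Both trades are legitimate.

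One slip to fix: the sentence ``a smooth self-diffeomorphism fixing an open subset of a connected manifold is the identity on the whole manifold'' is false as stated (think of a diffeomorphism supported in a ball). What you actually need is either that $\sigma$ is an \emph{isometry}, which you have already arranged and for which this rigidity does hold, or the simpler observation that $F$ is closed in $X$ and, were $\dim F = n$, also open, hence $F=X$ by connectedness of $X$---contradicting faithfulness. Either route gives $n'<n$.
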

\begin{proof} 
By Theorem \ref{thm:groupring_coho},  we have that 
 $H^{n}(\Gamma, \Gamma \bbZ) =  \bbZ$, where $n = \dim X$, and $H^{i}(\Gamma, \Gamma \bbZ) =  \{ 0 \}$, $i \neq n$. 
Let $X^T$ be the fixed point set of $T$. 
Since $X$ is an acyclic manifold and the action of $T$ is smooth, Smith theory implies that the fixed point set $X^T$ is also acyclic. (See \cite[Chapter IV, Corollary 1.5]{Bredon}.) Note in particular that $X^T$ is non-empty and a connected manifold. By 
\cite[Chapter IV, Theorem 2.1]{Bredon} the manifold $X^T$ is also 
orientable. Since  $\Gamma$ normalizes $T$,  it acts on the acyclic manifold $X^T$ with compact quotient. Again, by  Theorem \ref{thm:groupring_coho}, we also 
have $H^{r}(\Gamma, \Gamma \bbZ) =  \bbZ$, $r= \dim X^T$.
This implies $\dim X = \dim X^{T}$, and, hence,  $T = \{ 1 \}$. \end{proof}

   
Closely related is the following fact (which is 
well known in the case that $\Gamma$ acts freely, 
compare e.g.\ \cite[Lemma 3.1.13]{LeeRaymond}): 
\begin{lemma}  \label{lem:finite_trivial}
Let $C \leq \Diff(X)$ be a finite group of diffeomorphisms of the smooth manifold $X$.
Assume further that $C$ is normalized by\/ $\Gamma$.  
If $X$ is acyclic and orientable over $\bbZ_{p}$,  where $p$ is prime, 
then $p$ does not  divide the order of $C$. In particular, if $X$ is acyclic and 
orientable then $C= \{ 1 \}$. 
\end{lemma}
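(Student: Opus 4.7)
My plan is to mimic the strategy of Lemma \ref{lem:torus_trivial}, reducing to a $p$-group action on which Smith theory applies, and then obtaining a contradiction via the cohomological dimension result of Theorem~\ref{thm:groupring_coho}.

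Suppose for contradiction that a prime $p$ divides $|C|$, and choose a non-trivial Sylow $p$-subgroup $P \leq C$. Since $\Gamma$ normalizes $C$, conjugation gives a $\Gamma$-action on the finite set of Sylow $p$-subgroups of $C$, so the stabilizer $\Gamma_P = \{ \gamma \in \Gamma : \gamma P \gamma^{-1} = P\}$ has finite index in $\Gamma$. In particular, $\Gamma_P$ still acts smoothly and properly discontinuously on $X$, and $X/\Gamma_P$ is a finite cover of $X/\Gamma$, hence compact. Moreover, $\Gamma_P$ normalizes $P$, and therefore preserves the fixed-point set $X^P$.

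Next, I apply Smith theory to the $p$-group $P$ acting smoothly on the $\ZZ_p$-acyclic manifold $X$: by \cite[Chapter IV, Corollary 1.5]{Bredon}, $X^P$ is $\ZZ_p$-acyclic; by \cite[Chapter IV, Theorem 2.1]{Bredon}, it is a $\ZZ_p$-orientable smooth submanifold. Since $X^P$ is $\ZZ_p$-acyclic, it is in particular connected, and therefore a submanifold of well-defined dimension~$r$. Because $X^P$ is $\Gamma_P$-invariant and closed in $X$, its image in $X/\Gamma_P$ is closed in the compact space $X/\Gamma_P$, so $\Gamma_P$ acts on $X^P$ smoothly, properly discontinuously, and with compact quotient.

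Applying Theorem~\ref{thm:groupring_coho} with $R = \ZZ_p$ to both actions of $\Gamma_P$ yields simultaneously that
\[
H^{n}(\Gamma_P, \ZZ_p \Gamma_P) \cong \ZZ_p \quad \text{and} \quad H^{r}(\Gamma_P, \ZZ_p \Gamma_P) \cong \ZZ_p,
\]
with all other cohomology vanishing. Hence $r = n$, so $X^P$ is a full-dimensional submanifold of the connected manifold $X$, whence $X^P = X$. That is, $P$ acts trivially on $X$, contradicting faithfulness of the $C$-action (as $P \neq \{1\}$). Therefore $p \nmid |C|$. The second assertion follows immediately, since integral acyclicity and orientability of $X$ imply $\ZZ_p$-acyclicity and $\ZZ_p$-orientability for every prime $p$ (by the universal coefficient theorem), forcing $|C|$ to be coprime to every prime, i.e.\ $C = \{1\}$.

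The only mildly delicate point is ensuring that one works with a $\Gamma$-invariant $p$-subgroup so that Smith theory can be combined with Theorem~\ref{thm:groupring_coho}; this is handled by replacing $\Gamma$ with the finite-index subgroup $\Gamma_P$. All other steps are straightforward combinations of Smith theory, the connectedness argument, and the duality statement already in hand.
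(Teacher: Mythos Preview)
Your proof is correct and follows essentially the same route as the paper: pass to a finite-index subgroup of $\Gamma$ normalizing a Sylow $p$-subgroup, apply Smith theory to obtain a $\bbZ_p$-acyclic, $\bbZ_p$-orientable fixed-point submanifold, and then invoke Theorem~\ref{thm:groupring_coho} twice to force the dimensions to coincide. The only minor slip is the citation for the acyclicity of $X^P$: for a finite $p$-group the relevant Smith-theory statement is in \cite[Chapter III, Theorems 5.2, 7.11]{Bredon}, not Chapter IV, Corollary 1.5 (which concerns torus actions).
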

\begin{proof}  Let $C({p})$ be a $p$-Sylow subgroup
of $C$. Since $X$ is acyclic over 
$\bbZ_{p}$, the Smith theorem (cf.\ \cite[Chapter III, Theorems 5.2, 7.11]{Bredon} implies 
that the fixed point set $X^{C(p)}$ is also acyclic over 
$\bbZ_{p}$. In particular, $X^{C(p)}$ is a connected manifold. It is also orientable over 
$\bbZ_{p}$,  by the proof of \cite[Chapter IV, Theorem 2.1]{Bredon}.
As $\Gamma$ normalizes $C$, it acts on the 
set of $p$-Sylow subgroups of $C$. Therefore, we may assume
(after going down to a subgroup of finite index in $\Gamma$) that
$\Gamma$ normalizes $C(p)$.
In particular, $\Gamma$ leaves $\displaystyle 
X^{C(p)}$ invariant. Since $X^{C(p)}$ is a connected manifold, which is acyclic and orientable over $\bbZ_{p}$, Theorem \ref{thm:groupring_coho} implies $\dim X = \dim X^{C(p)}$. Hence,  $C(p) = \{ 1 \}$
\end{proof} 
   
From the above, we deduce that properly discontinuous actions on acyclic \emph{smooth manifolds} do not normalize compact Lie groups: 

\begin{theorem} \label{thm:compact_trivial} 
Let $X$ be an orientable acyclic manifold and let $\Gamma$ be a group which acts smoothly and properly discontinuously on $X$ with compact quotient. Let $\kappa$ be a compact Lie group acting faithfully and smoothly on $X$,  such that the action is normalized by\/ $\Gamma$.
 Then $\kappa =\{ 1 \}$. 
\end{theorem}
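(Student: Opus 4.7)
The plan is to argue by contradiction: assume $\kappa\neq\{1\}$ and exhibit either a non-trivial \emph{characteristic} torus or a non-trivial \emph{characteristic} finite subgroup of $\kappa$, which $\Gamma$ must then normalize, yielding a contradiction with Lemma \ref{lem:torus_trivial} or Lemma \ref{lem:finite_trivial} respectively. This is possible because the normalizing action of $\Gamma$ on $\kappa$ defines a homomorphism $\Gamma\to\Aut(\kappa)$, so every characteristic Lie subgroup of $\kappa$ is automatically $\Gamma$-invariant.

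The easy reductions run as follows. If $\kappa^{0}=\{1\}$, then $\kappa$ is finite and Lemma \ref{lem:finite_trivial} applies. Otherwise $\kappa^{0}\neq\{1\}$, and I consider $Z(\kappa^{0})^{0}$, which is a characteristic torus in $\kappa$; its non-triviality would contradict Lemma \ref{lem:torus_trivial}. Hence $\kappa^{0}$ must be semisimple, in which case $Z(\kappa^{0})$ is finite and characteristic in $\kappa$, and its non-triviality would contradict Lemma \ref{lem:finite_trivial}. I am thus reduced to the hard case that $\kappa^{0}$ is a \emph{centerless} compact semisimple Lie group, where no non-trivial characteristic torus or finite subgroup exists.

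To handle this case, my idea is to exchange the $\Gamma$-normalization for a \emph{centralization} of $\kappa^{0}$ by a suitable auxiliary group $C$. Since $\Out(\kappa^{0})$ is finite and $\kappa^{0}\cong \Inn(\kappa^{0})$, the kernel $\Gamma'$ of the composition $\Gamma\to \Aut(\kappa^{0})\to \Out(\kappa^{0})$ has finite index in $\Gamma$, and for each $\gamma\in \Gamma'$ there is a \emph{unique} $k_{\gamma}\in\kappa^{0}$ with $\gamma k\gamma^{-1}=k_{\gamma}k k_{\gamma}^{-1}$ for all $k\in\kappa^{0}$. A short computation using uniqueness of $k_{\gamma}$ shows that $c(\gamma):=k_{\gamma}^{-1}\gamma\in\Gamma\kappa\subset\Diff(X)$ defines a group homomorphism $c\colon \Gamma'\to \Diff(X)$ whose image $C$ centralizes $\kappa^{0}$ pointwise and whose kernel is contained in the finite group $\Gamma\cap \kappa$.

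The final step is to verify that $C$ still acts properly discontinuously on $X$ with compact quotient; Lemma \ref{lem:torus_trivial}, applied with $C$ in the role of $\Gamma$ and $T$ any maximal torus of $\kappa^{0}$ (which $C$ centralizes, hence normalizes), then forces $T=\{1\}$, contradicting $\kappa^{0}\neq\{1\}$. Proper discontinuity follows because $c(\gamma)K\cap K\neq\emptyset$ implies $\gamma K'\cap K'\neq\emptyset$ for the compact set $K'=\kappa^{0}K$, and by proper discontinuity of $\Gamma'$ only finitely many $\gamma$ satisfy this. Compactness of $X/C$ uses that $C\kappa^{0}=\Gamma'\kappa^{0}$ has finite index in $\Gamma\kappa$, so $X/(C\kappa^{0})$ is a finite quotient of the compact space $X/(\Gamma\kappa)$, and then the proper map $X/C\to (X/C)/\kappa^{0}=X/(C\kappa^{0})$ with compact base forces $X/C$ to be compact. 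The principal obstacle is precisely this adjoint-type case, which requires the above construction of an auxiliary centralizing group rather than a direct characteristic-subgroup argument.
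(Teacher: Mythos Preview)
Your proof is correct and follows essentially the same route as the paper: reduce to the case where $\kappa^{0}$ is connected, semisimple, centerless; use finiteness of $\Out(\kappa^{0})$ to pass to a finite-index subgroup acting by inner automorphisms; twist by the map $\gamma\mapsto k_{\gamma}^{-1}\gamma$ to produce a group centralizing $\kappa^{0}$; then invoke Lemma~\ref{lem:torus_trivial} on a maximal torus. Your write-up is in fact more careful than the paper's at the final step, since you explicitly verify that the centralizing group $C$ acts properly discontinuously and cocompactly before applying Lemma~\ref{lem:torus_trivial}, whereas the paper leaves this implicit in the assertion that $\fD=\kappa\Gamma$ is a Lie group acting properly.
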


\begin{proof}  
Since $\Gamma$ normalizes also the center of $\kappa$, which is
an extension of a toral group by a finite group, we may in the light of Lemma \ref{lem:torus_trivial} and Lemma \ref{lem:finite_trivial} 
assume from the beginning that $\kappa$ is connected and semisimple with trivial center. Moreover,  since $\kappa$ is semisimple, the group of outer automorphims $\Out(\kappa)$ is finite. Therefore, by going down to
a finite index subgroup of $\Gamma$ if necessary, we may
assume that conjugation with the elements of 
$\Gamma$ induces inner automorphisms of $\kappa$. 
As $\Gamma$ normalizes $\kappa$, the group
$\fD = \kappa \,  \Gamma$ is a Lie group of diffeomorphisms which acts
properly on $X$.
In view of the fact that 
the center of $\kappa$ is trivial, this implies that there exists 
a closed subgroup $ \Gamma' \leq \kappa \,  \Gamma $,  
which  \emph{centralizes} $ \kappa$,  such that 
$$  \fD = \kappa  \cdot  \Gamma = \kappa \cdot \Gamma' \, . $$
 (Indeed, given $\ell \in \fD$ define $\mu(\ell): \kappa \ra \kappa$ by
 $\mu(\ell) \, (k) = \ell k \ell^{-1}$, for all $k \in  \kappa$. 
For each $\gamma \in \Gamma$, let $k_{\gamma} \in \kappa$ be the unique element, such that $\mu({\gamma}) = \mu({k_{\gamma}})$.  Since the center of $\kappa$ is trivial, the map $\gamma \mapsto k_{\gamma}^{-1} \gamma$ is a homomorphism with kernel $\Gamma \cap  \kappa$.  Hence, the subgroup $$ \Gamma' = \{ \gamma' = k_{\gamma}^{-1} \gamma \mid \gamma \in  \Gamma \}$$ has the required properties.) 
Since $\Gamma'$ centralizes $\kappa$, $\kappa$ must be trivial, by
Lemma \ref{lem:torus_trivial}.
\end{proof}

We next turn to a related auxiliary result, 
which plays a role in Section \ref{sect:rad2}.

\begin{lemma} \label{lem:torus_trivial_connected_new} 
Let $\mathsf{p}: X \ra Y$ be a fiber bundle of contractible manifolds. Let 
$\Gamma$ act properly discontinuously on $X$ with compact quotient
and assume that the action descends equivariantly to a smooth action on $Y$.
Then any compact torus $T$ acting on $Y$ which is normalized by the image of $\Gamma$ 
acts trivially. 
\end{lemma}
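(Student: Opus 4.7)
The plan is to reduce the lemma to the setting of Lemma~\ref{lem:torus_trivial}, applied to a suitable $\Gamma$-invariant subspace of $X$ rather than trying to lift the $T$-action to $X$ directly. First I would let $\Theta$ denote the image of $\Gamma$ in $\Diff(Y)$; since $\Theta$ normalizes $T$, it preserves the fixed-point set $Y^{T}$. Smith theory (\cite[Chapter~IV, Corollary~1.5]{Bredon}, together with the orientability statement \cite[Chapter~IV, Theorem~2.1]{Bredon}), applied to the smooth torus action on the acyclic manifold $Y$, guarantees that $Y^{T}$ is a non-empty, connected, orientable, acyclic smooth submanifold of $Y$.

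Next I would analyse the fibers of $\mathsf{p}$. From the long exact sequence of homotopy groups for the fibration $\mathsf{p}\colon X \to Y$, together with the contractibility of $X$ and $Y$, every fiber is weakly contractible; since fibers are smooth manifolds, Whitehead's theorem upgrades this to genuine contractibility. Consequently, $Z = \mathsf{p}^{-1}(Y^{T})$ is a smooth subbundle over $Y^{T}$ with contractible fibers, is homotopy equivalent to $Y^{T}$ (hence acyclic), is a smooth submanifold of $X$, and inherits an orientation from $Y^{T}$ and the fibers. Because $\mathsf{p}$ is $(\Gamma,\Theta)$-equivariant and $\Theta$ preserves $Y^{T}$, the subset $Z$ is $\Gamma$-invariant, so $\Gamma$ acts smoothly and properly discontinuously on $Z$ with compact quotient $Z/\Gamma \subset X/\Gamma$.

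The final step is a cohomological dimension comparison. Applying Theorem~\ref{thm:groupring_coho} to the action of $\Gamma$ on $X$ gives $H^{n}(\Gamma,\ZZ\Gamma) = \ZZ$ with vanishing in all other degrees, where $n = \dim X$; applying it to the action on $Z$ gives the analogous statement for $n' = \dim Z$. Since $H^{*}(\Gamma,\ZZ\Gamma)$ depends only on $\Gamma$, we obtain $\dim X = \dim Z$, which forces $\dim Y^{T} = \dim Y$. As $Y^{T}$ is a connected submanifold of the connected manifold $Y$ of the same dimension, $Y^{T} = Y$, i.e., $T$ acts trivially on $Y$. I expect the main obstacle to be the careful verification of the acyclicity and orientability of $Z$, which hinges on contractibility of the fibers of $\mathsf{p}$; once that is in hand, the proof mirrors the dimension-matching argument that underlies Lemma~\ref{lem:torus_trivial}.
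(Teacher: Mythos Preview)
Your proposal is correct and follows essentially the same route as the paper: pull back the fixed set $Y^{T}$ along $\mathsf{p}$, observe that $Z=\mathsf{p}^{-1}(Y^{T})$ is acyclic because the fibers are contractible, and then compare the cohomological dimension of $\Gamma$ computed from its action on $X$ with that computed from its action on $Z$. The only minor difference is that the paper bypasses the orientability check for $Z$ by invoking Proposition~\ref{prop:groupring_coho} directly (using only that $H^{i}_{c}(Z,\ZZ)=0$ for $i>\dim Z$), whereas you appeal to Theorem~\ref{thm:groupring_coho}; your orientability argument goes through since the bundle $X\to Y$ is trivial over the contractible base $Y$, so $Z\cong Y^{T}\times F$.
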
 
\begin{proof}
The fixed point set $Y^T$ is an acyclic orientable manifold (see Lemma \ref{lem:torus_trivial}).
Since the fibers of the bundle are contractible, the preimage $\mathsf{p}^{-1}(Y^T)$ is 
also acyclic. The latter is acted upon by $\Gamma$, since the action of $T$ is normalized by 
the image of $\Gamma$. Now, as in the proof of Lemma \ref{lem:torus_trivial}, 
$H^r(\Gamma, \Gamma \bbZ) = \bbZ$,  $r = \dim X$, and,  by Proposition \ref{prop:groupring_coho}, $H^i(\Gamma, \Gamma \bbZ) = \{0 \}$, 
$i >  \dim \mathsf{p}^{-1}(Y^T)$, so that $r = \dim \mathsf{p}^{-1}(Y^T)$.
Therefore, $Y= Y^T$, which
implies that $T$ acts trivially on $Y$. 
\end{proof}

\subsection{First application to Riemannian manifolds} 
It was proved in \cite[Claim II]{FW} that $ \Isom(X)^0$
contains no compact connected factor under the somewhat stronger assumption that $X/\Gamma$ is a manifold. By the above, this generalizes to divisible Riemannian manifolds $X$ and simultaneously strengthens the statement to include possibly non-connected compact normal groups: 
\begin{corollary} \label{cor:no_normal_compact}
Let $X$ be a contractible Riemannian manifold which 
is divisible. Then $ \Isom(X)$ has no non-trivial compact normal subgroup.   
 \end{corollary}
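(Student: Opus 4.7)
The strategy is to apply Theorem~\ref{thm:compact_trivial} directly. Let $\kappa$ be a compact normal subgroup of $\Isom(X)$; the goal is to verify the four hypotheses of that theorem and conclude that $\kappa=\{1\}$. First I check the topology of $X$: a contractible smooth manifold is acyclic over $\bbZ$ (indeed $H_{0}(X,\bbZ)=\bbZ$ and $H_{i}(X,\bbZ)=0$ for $i>0$), and being in particular simply connected it is orientable.

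Next I extract a properly discontinuous action with compact quotient. By the divisibility assumption there exists a discrete subgroup $\Gamma\leq\Isom(X)$ with $X/\Gamma$ compact. By Myers--Steenrod (\cf Section~\ref{oliverrework}) $\Isom(X)$ is a Lie group acting properly and smoothly on $X$; hence $\Gamma$ acts properly discontinuously and smoothly on $X$ with compact quotient, which is exactly the hypothesis imposed on the discrete group in Theorem~\ref{thm:compact_trivial}.

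Third, I exhibit $\kappa$ as the compact Lie group required by Theorem~\ref{thm:compact_trivial}. Being compact, $\kappa$ is closed in $\Isom(X)$ and therefore a Lie subgroup by Cartan's closed subgroup theorem. As a subgroup of $\Isom(X)$ it acts faithfully on $X$ (the isometry group acts faithfully by definition) and smoothly on $X$ (isometries are smooth). Finally, normality of $\kappa$ in $\Isom(X)$ combined with $\Gamma\leq\Isom(X)$ gives $\gamma\kappa\gamma^{-1}=\kappa$ for every $\gamma\in\Gamma$, so $\Gamma$ normalizes the $\kappa$-action. All four hypotheses of Theorem~\ref{thm:compact_trivial} are then met, and the theorem yields $\kappa=\{1\}$.

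There is no substantive obstacle at this level: the deep ingredients --- Smith theory applied to fixed-point sets of compact group actions on acyclic manifolds, together with the computation $H^{n}(\Gamma,\bbZ\Gamma)=\bbZ$ from Theorem~\ref{thm:groupring_coho} --- are already packaged inside Theorem~\ref{thm:compact_trivial}, so the present corollary is merely its specialization from smooth properly discontinuous crystallographic actions to the isometric setting. The only point worth a moment's reflection is that $\kappa$ is asserted compact but \emph{a priori} possibly disconnected; this is precisely why one needs Theorem~\ref{thm:compact_trivial} in its full generality (covering tori, finite groups, and semisimple compact groups simultaneously), rather than the partial result \cite[Claim II]{FW} quoted in the remark preceding the corollary.
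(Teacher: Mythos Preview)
Your proof is correct and is exactly the approach the paper intends: the corollary is stated immediately after Theorem~\ref{thm:compact_trivial} with no separate proof, the text simply saying ``by the above'', and your argument spells out precisely the verification of hypotheses that this phrase abbreviates. There is nothing to add or correct.
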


\section{Isometry groups with radical} \label{sect:Isomradical}
Let $X$ be contractible Riemannian manifold which is \emph{divisible},  that is,
there exists  a discrete group $\Gamma$ of isometries 
such that the quotient space $$ X \, \big/ \, \Gamma $$ is compact.
In this section, we are concerned with the
properties of the action of the continuous part  
$$ \fG= \Isom(X)^0 $$ 
of ${\rm Isom}(X)$ on $X$. 
In particular, we study the action of the maximal 
connected normal solvable subgroup
$$    \fR  \, \leq \, \fG \, , $$  
which is called the \emph{solvable radical} of $\fG $. 
One  main goal here is to show that the \emph{radical quotient} 
$$ X \big/ \, \fR $$ is a Riemannian manifold which is divisible by the image of $\Gamma$  in 
$\Isom(X/ \fR)$.
To this end, a detailed analysis for the action of $\Isom(X)$ on 
$ X/ \fR$ is required. 

\subsection{Principal bundle structure of the radical quotient} \label{sect:rad1} 
As our starting point, we show here  that the quotient $$ \mathsf{q} : X \lra  X/ \fR \;  $$
inherits a natural structure of a \emph{principal bundle}, where the base  is a contractible manifold and the structure group is a simply connected solvable Lie group $\fR_{0}$ which is contained in $\fR$. 

\smallskip 
Let $\mathsf N$ be the nilpotent radical 
of $\fR$. Then $\mathsf N$ 
is a closed and characteristic subgroup of  $\Isom(X)$. 

\begin{lemma}\label{lem:nilradical}
$\fN$ is simply connected.
 In particular,  
$\fN$ has no  (non-trivial) compact subgroup
and\/ $\fN$ acts properly and freely on $X$. 
\end{lemma}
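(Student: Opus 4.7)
The plan is to reduce everything to the statement that $\fN$ contains no non-trivial compact subgroup, and then to deduce both simple connectedness and the freeness/properness of the action.

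First I would verify that the maximal compact subgroup $T$ of $\fN$ is normal in the full isometry group $\Isom(X)$. Since $\fR$ is the solvable radical of the identity component $\Isom(X)^0$, it is characteristic in $\Isom(X)^0$, and $\Isom(X)^0$ is characteristic in $\Isom(X)$, so $\fR \triangleleft \Isom(X)$. The nilpotent radical $\fN$ is characteristic in $\fR$, hence $\fN \triangleleft \Isom(X)$ as well. Because $\fN$ is connected nilpotent, its maximal compact subgroup $T$ is a central torus which is characteristic in $\fN$, and therefore characteristic in $\Isom(X)$.

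Next I would invoke Corollary \ref{cor:no_normal_compact} (already proved in Section \ref{sect:smooth_crystallographic}), which says that the isometry group of a divisible contractible Riemannian manifold has no non-trivial compact normal subgroup. Applied to $T$, this forces $T = \{1\}$. Since a connected nilpotent Lie group whose maximal compact subgroup is trivial is necessarily simply connected (indeed, $\fN \cong \fN/T$ is diffeomorphic to a Euclidean space via the exponential map), this gives simple connectedness of $\fN$. Any compact subgroup of $\fN$ would be contained, after conjugation, in the maximal compact subgroup, hence must be trivial as well.

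For the action, $\fN$ is a closed subgroup of $\Isom(X)$, and by the Myers--Steenrod theorem $\Isom(X)$ acts properly on $X$; the restriction of a proper action to a closed subgroup is proper, so $\fN$ acts properly on $X$. Properness of the action implies that every isotropy group $\fN_x$ is compact. Combined with the conclusion of the previous step that $\fN$ has no non-trivial compact subgroup, we obtain $\fN_x = \{1\}$ for every $x \in X$, i.e.\ the action is free.

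The only substantive step is the appeal to Corollary \ref{cor:no_normal_compact}; the rest is an assembly of standard facts about nilpotent Lie groups and proper actions. I do not expect any real obstacle, since the corollary has already been established by the Smith-theoretic arguments of Section \ref{sect:smooth_crystallographic}.
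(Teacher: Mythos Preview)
Your proposal is correct and follows essentially the same route as the paper: show that the maximal compact subgroup $T$ of $\fN$ is characteristic, hence normalized by $\Gamma$, and then kill it using the Smith-theoretic results of Section~\ref{sect:smooth_crystallographic}. The only cosmetic difference is that the paper invokes Lemma~\ref{lem:torus_trivial} directly (since $T$ is a torus normalized by $\Gamma$), whereas you cite the packaged Corollary~\ref{cor:no_normal_compact}; your version also spells out the ``In particular'' consequences (simple connectedness, no compact subgroups, properness and freeness) that the paper leaves implicit.
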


\begin{proof}
Let $T$ be the maximal compact subgroup of $\fN$. Then $T$ is a compact torus which acts effectively on $X$. As $\fN$ is 
nilpotent, $T$ is central and 
characteristic in $\fN$, and also
characteristic in $\fG$. Therefore, 
$\Gamma$ normalizes $T$. 
Lemma \ref{lem:torus_trivial} implies that $T=\{1\}$.
\end{proof}
Therefore, $X/\fN$ is a contractible manifold  and $$X \to    X/\fN $$ is 
 a principal bundle with group $\fN$.

\smallskip   
We continue with our study of the action of $\fR$ on $X$. 
%
%
Let  $\fT$ be a maximal compact 
subgroup of $\fR$ (which is a compact torus).
By Lemma \ref{lem:nilradical}, $\fT$ intersects $\fN$ only trivially. Hence Lemma \ref{lem:chara-R0} asserts that we may \emph{choose} a simply connected solvable 
characteristic subgroup $\fR_0$ of 
$\fR$, with $\fN \leq \fR_{0}$, such that
\begin{equation}\label{1infra}
\fR=\fR_0\cdot  \fT \; \,  \text{and}  \; \,  \fR_0\cap  \fT = \{ 1 \}  .\end{equation} 
Since the
maximal compact subgroup of $\fR_0$ is trivial, 
$\fR_0$ acts freely on $X$. Indeed, the geometry of the $\fR$-orbits on $X$ is controlled 
by $\fR_{0}$, and, as the following Proposition shows, the quotient map 
$$\mathsf{q}: X \ra  X /\fR $$
is a principal bundle with group $\fR_0$. 

\begin{pro}\label{pro:solabel}
The compact torus $\fT$ acts trivially on  
$X/\fN$.  
Consequently, we have:  \begin{enumerate}
\item $\fR_{0}$ acts simply transitively on each fiber of $\mathsf{q}$.
\item $X /\fR$ is a contractible manifold.
\item $\fT$ is faithfully represented on $\fN$ by conjugation. 
\item For every $x \in X$, the stabilizer $\fR_{x}$ at 
$x$ is conjugate to $\fT$ by an element of\/ $\fN$.  
\end{enumerate}
\end{pro}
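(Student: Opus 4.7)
The heart of the proposition is the first assertion---that the torus $\fT$ acts trivially on $X/\fN$---which I would establish by reducing to Lemma \ref{lem:torus_trivial_connected_new}, applied to the principal $\fN$-bundle $\mathsf{p}\colon X\to X/\fN$. Given this triviality, statements (1)--(4) will follow by short arguments combining the decomposition $\fR=\fR_0\cdot\fT$ with the freeness of $\fR_0$.

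To verify the hypotheses of Lemma \ref{lem:torus_trivial_connected_new}: $\fN$ is simply connected nilpotent by Lemma \ref{lem:nilradical}, hence diffeomorphic to $\bbR^k$, and acts freely and properly on $X$, so $Y:=X/\fN$ is a contractible manifold and $\mathsf{p}$ is a principal $\fN$-bundle between contractible manifolds; the group $\Gamma$ descends equivariantly to a smooth action on $Y$ because $\fN$ is characteristic in $\fG$. Let $\fT_1\subseteq\Isom(Y)$ denote the image of $\fT$; since $\fT\cap\fN=\{1\}$ ($\fN$ having no non-trivial compact subgroup), $\fT_1$ is a compact torus. The key observation is that $\fT_1$ is \emph{characteristic} in $\fR/\fN$: the derived algebra of a solvable Lie algebra lies in its nilradical, so $\fR/\fN$ is abelian and thus splits as a direct product of a vector group and the unique maximal compact subgroup $\fT_1$. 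Since any $\gamma\in\Gamma$ normalizes both $\fR$ and $\fN$ (both characteristic in $\fG$), it acts by automorphisms on $\fR/\fN$ and preserves $\fT_1$; passing to the induced action on $Y$, this says exactly that the image of $\Gamma$ in $\Diff(Y)$ normalizes $\fT_1$. Lemma \ref{lem:torus_trivial_connected_new} then forces $\fT_1=\{1\}$, so $\fT$ preserves every $\fN$-orbit in $X$.

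The deductions are then quick. For (1) and (2): since $\fR=\fR_0\cdot\fT$, $\fN\leq\fR_0$, and $\fT$ preserves each $\fN$-orbit, every $\fR$-orbit equals the corresponding $\fR_0$-orbit; by \eqref{1infra}, $\fR_0$ has trivial maximal compact subgroup and hence acts freely, giving (1); and $X/\fR=X/\fR_0$ is the quotient of a contractible manifold by a free proper action of the contractible, simply connected solvable group $\fR_0$, so it is a contractible manifold, giving (2). For (4), fix $x\in X$ and consider the smooth $\fT$-action on the contractible manifold $\fN\cdot x\cong\fN\cong\bbR^k$; it extends continuously to an action on the one-point compactification $S^k$ fixing $\infty$, and the identity $\chi((S^k)^{\fT})=\chi(S^k)=1+(-1)^k\neq 1$ for smooth torus actions on compact manifolds forces the existence of a further $\fT$-fixed point $y=n\cdot x$ with $n\in\fN$. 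Then $n^{-1}\fT n\subseteq\fR_x$, and since $n^{-1}\fT n$ is already a maximal compact subgroup of $\fR$ while $\fR_x$ is compact (by properness of the $\fR$-action), equality holds. Finally for (3): if $t\in\fT$ centralizes $\fN$, then for every $x$ the identity $\fR_x=n^{-1}\fT n$ yields $n^{-1}tn=t\in\fR_x$, so $t$ fixes every point of $X$, forcing $t=1$. The main obstacle is the normalization argument in the second paragraph: $\fT$ itself need not be preserved by $\Gamma$, and the trick is to identify its image $\fT_1$ as the unique characteristic maximal compact subgroup of the abelian quotient $\fR/\fN$.
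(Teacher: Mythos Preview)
Your argument for the main assertion and for (1)--(2) is essentially the paper's: you identify the image of $\fT$ as the characteristic maximal compact torus in the abelian group $\fR/\fN$, note that $\Gamma$ therefore normalizes it on $Z=X/\fN$, and apply Lemma~\ref{lem:torus_trivial_connected_new}.

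For (3) and (4) you take a genuinely different route. The paper first proves (3) directly: if $t\in\fT$ centralizes $\fN$, then $t$ acts on each $\fN$-orbit commuting with the simply transitive $\fN$-action, so lies in the centralizer of this action, which (being isomorphic to $\fN$) has no nontrivial compact subgroups; hence $t$ acts trivially on every fiber and on $X/\fN$, so $t=1$. Then (4) follows from a one-line dimension count: $\dim\fR_x=\dim\fR-\dim\fR_0=\dim\fT$, so $\fR_x$ is maximal compact and hence conjugate to $\fT$. You instead first produce, on each $\fN$-orbit, a $\fT$-fixed point $n\cdot x$, giving $\fR_x=n^{-1}\fT n$ with $n\in\fN$ explicitly; then (3) follows from (4). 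A pleasant by-product of your order is that the conjugating element is visibly in $\fN$, as the statement asserts; the paper's proof is terser on this point (though the claim does follow, since $\fR_x$ and $\fT$ both sit inside $\fN\rtimes\fT$ and are conjugate there by an element of $\fN$).

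One technical wrinkle: your one-point compactification $S^k$ carries only a \emph{continuous} $\fT$-action (smoothness at $\infty$ is not automatic), so invoking the Euler-characteristic identity ``for smooth torus actions'' is a slight mismatch. The identity $\chi((S^k)^\fT)=\chi(S^k)$ does hold for continuous torus actions on compact manifolds, so your conclusion survives; but a cleaner fix, entirely within the paper's toolkit, is to skip the compactification and apply Smith theory directly to the smooth $\fT$-action on the acyclic manifold $\fN\cdot x\cong\bbR^k$ (as in Lemma~\ref{lem:torus_trivial}, via \cite[Chapter IV, Corollary 1.5]{Bredon}): the fixed set is acyclic, hence nonempty.
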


\begin{proof} 
The image of $\fT$
is the maximal compact subgroup of the abelian group 
$\fR/\fN$,
and it is a characteristic subgroup of $\fR/\fN$.
Since $\Isom(X)$ normalizes $\fR$,
it follows that the induced
action of $\Isom(X)$ on $$ Z = X/\fN $$ normalizes the 
induced action of $\fT$. 
We choose the unique Riemannian metric on $Z$, which
makes the map $X \ra Z$ a Riemannian submersion.
Since $\fN$ is characteristic in $\Isom(X)$, there is 
a  homomorphism  $$  \Isom(X) \lra \Isom(Z)  \; . $$ 
With respect to the induced actions, $\Gamma$ normalizes 
$\fT$. Therefore, 
Lemma \ref{lem:torus_trivial_connected_new}
implies that  $\fT$ acts trivially on $Z$. 
This implies that $ X/ \fR  =  X / (\fR_{0} \fT) =  X/ \fR_{0} \; .$
In particular,  (1) and (2) hold.  

%

Let $Z_{\fT}(\fN) \leq \fT$ denote the centralizer of $\fN$ in $\fT$.
Since $\fT$ acts trivially on the space $Z= X/\fN$, $\fT$ and therefore also 
its subgroup $Z_{\fT}(\fN)$)
act on each orbit $\fN \cdot {x}$, the latter centralizing the simply transitive 
action of $\fN $. Since $\fN$ is simply connected 
nilpotent,  $\fN$ does not contain any non-trivial compact 
subgroups. The same is true for the  centralizer of the simply transitive action of $\fN$. 
This implies that the compact group 
$Z_{\fT}(\fN)$  acts trivially
on each orbit of $\fN$. Taking into account that $Z_{\fT}(\fN)$ acts trivially on $X/\fN$, 
we deduce that $Z_{\fT}(\fN) = \{ 1 \}$, 
which proves (3).

As $\fR$ acts properly on $X$, the stabilizer $\fR_x$ is compact.
Since $\dim  \fR_x = \dim \fR - \dim \fR_{0} = \dim \fT$,  $\fR_x$ 
is  maximal compact in $\fR$, as well. By the conjugacy of maximal compact subgroups
of $\fR$, 
$\fR_x$ is conjugate to $\fT$, proving (4).
\end{proof}

%
%

\subsection{Divisibility of the radical quotient.} 
\label{sect:rad2}
Since ${\mathsf q}: X \ra Y$ is a principal bundle, we may equip the radical quotient $$ Y = X / \fR \,  $$ 
with the unique 
Riemannian structure such that $ {\mathsf q}: X \ra Y$  is a Riemannian submersion. 
Since $\fR$ is normal in $\Isom(X)$, we also have a well-defined 
homomorphism
\begin{equation}  \label{eq:homo_phi}
\phi: \,  \Isom(X) \lra \Isom(Y) \; . \\
\end{equation}
Let $$ \Theta = \phi(\Gamma)$$  denote the image of
$\Gamma$ in $\Isom(Y)$.  

\smallskip
The main goal of this subsection 
is to show:

\begin{theorem} \label{thm:Theta_dis}
$\Theta$ is a discrete subgroup of\/ $\Isom(Y)$.
In particular, $\Theta$ acts properly discontinuously on $Y$ and $Y/ \, \Theta$ is compact.  
\end{theorem}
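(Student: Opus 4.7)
The plan is to reduce the discreteness of $\Theta$ in $\Isom(Y)$ to a statement about the uniform lattice $\Gamma\le\Isom(X)$ modulo $\fR$, in three steps: (1) show $\Gamma\cap\fR$ is a uniform lattice in $\fR$; (2) deduce that the image $\Gamma\fR/\fR$ is discrete in $\Isom(X)/\fR$; (3) transfer discreteness and compactness to the action of $\Theta$ on $Y = X/\fR$. Throughout, the key inputs are the Bieberbach-Auslander-Wang theorem (Proposition \ref{prop:key-correct}), the uniform-subgroup machinery of Lemma \ref{lem:uniformsubgs}, and the properness of quotient actions from Lemma \ref{lem:properquotient}.

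For Step (1), Lemma \ref{lem:isometry_lattices} shows that $\Gamma_0 := \Gamma\cap\Isom(X)^0$ is a uniform lattice in $\fG=\Isom(X)^0$. Proposition \ref{prop:key-correct} then yields a uniform lattice $\Lambda := \Gamma_0\cap\fR\fK$ in $\fR\fK$, where $\fK$ is the maximal compact connected normal subgroup of a Levi factor of $\fG$. Now $\fR$ is closed and normal in $\fR\fK$ with compact quotient $\fR\fK/\fR\cong \fK/(\fK\cap\fR)$, so the image of the discrete group $\Lambda$ in $\fR\fK/\fR$ is finite; hence $\Lambda\cdot\fR$ is the preimage of a finite set and is closed in $\fR\fK$. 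The ``if and only if'' statement of Lemma \ref{lem:uniformsubgs} then gives that $\fR/(\fR\cap\Lambda)$ is compact. Since $\fR\cap\Lambda = \fR\cap\Gamma_0 = \fR\cap\Gamma$, we conclude that $\fR\cap\Gamma$ is a uniform lattice in $\fR$.

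For Step (2), $\fR$ is characteristic in $\fG$ and $\fG$ is characteristic in $\Isom(X)$, so $\fR$ is normal in $\Isom(X)$. Applying Lemma \ref{lem:uniformsubgs} again, now with $G=\Isom(X)$, $L=\fR$, and the uniform lattice $\Gamma$, Step (1) yields that $\Gamma\fR$ is closed in $\Isom(X)$. Therefore $\Gamma\fR/\fR$ is a closed, countable subgroup of the Lie group $\Isom(X)/\fR$; since a countable closed subgroup of a Lie group must be zero-dimensional, it is discrete. For Step (3), Lemma \ref{lem:properquotient} implies that $\Isom(X)/\fR$ acts properly on $Y$, so its discrete subgroup $\Gamma\fR/\fR$ acts properly discontinuously on $Y$. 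This action coincides with that of $\Theta=\phi(\Gamma)$ viewed in $\Isom(Y)$. A subgroup of $\Isom(Y)$ whose action on $Y$ is properly discontinuous must itself be discrete (for instance: any limit $\theta_n\to\theta$ in $\Isom(Y)$ with $\theta_n\in\Theta$ is, by compactness of a neighbourhood of an orbit pair, trapped in a finite subset of $\Theta$), so $\Theta$ is discrete in $\Isom(Y)$. Finally, the Riemannian submersion $\mathsf{q}\colon X\to Y$ satisfies $\mathsf{q}\circ\gamma = \phi(\gamma)\circ\mathsf{q}$ for all $\gamma\in\Gamma$, so it descends to a continuous surjection $X/\Gamma\to Y/\Theta$; compactness of $X/\Gamma$ therefore gives compactness of $Y/\Theta$.

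The main technical obstacle is in Step (1): the Bieberbach-Auslander-Wang theorem natively produces a lattice in $\fR\fK$, not in $\fR$ itself. A tempting shortcut would be to kill $\fK$ via Corollary \ref{cor:no_normal_compact}, but this requires $\fK$ to be normal in $\Isom(X)$, which generally fails (since $\fR$ need not normalise a Levi subgroup). The proposal circumvents this by using only that the quotient $\fR\fK/\fR$ is \emph{compact}, which already forces the image of the discrete lattice $\Lambda$ to be finite and hence $\Lambda\fR$ to be closed; this suffices to descend the uniform-lattice property from $\fR\fK$ to $\fR$.
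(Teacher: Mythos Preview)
Your Step (1) contains a genuine gap. The inference ``$\fR\fK/\fR$ is compact, so the image of the discrete group $\Lambda$ in $\fR\fK/\fR$ is finite'' is not valid: the image of a discrete subgroup under a quotient map to a compact group need not be discrete, hence need not be finite. A clean counterexample (in abstract Lie groups) is $H=\RR\times\SU(2)$ with $N=\RR$ and the uniform lattice $\Lambda=\{(n,g^n):n\in\ZZ\}$ for some $g\in\SU(2)$ of infinite order; then $\Lambda\cap N=\{0\}$ while the image of $\Lambda$ in $H/N\cong\SU(2)$ is infinite. Equivalently, via Lemma~\ref{lem:uniformsubgs}, your desired conclusion that $\Gamma\cap\fR$ is uniform in $\fR$ is the same as $\psi(\Gamma_0)=\Gamma_0\fR/\fR$ being closed (hence discrete) in $\fG/\fR$; but the paper explicitly allows for the possibility that $\psi(\Gamma_0)$ is \emph{not} closed, its closure having a nontrivial connected torus $T=\overline{\psi(\Gamma_0)}^{\,0}$.

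This is exactly the obstacle the paper's argument is built to get around, and it cannot be bypassed by soft compactness reasoning. The paper does \emph{not} prove that $\Gamma\cap\fR$ is uniform in $\fR$. Instead it shows (Lemma~\ref{lem:TKtrivial}, using the topological Lemma~\ref{lem:torus_trivial_connected_new}) that the torus $T$ acts trivially on $Y$, so $T$ lies inside the compact normal subgroup $\ker\phi/\fR$ of $\Isom(X)/\fR$; then Lemma~\ref{lem:uniformsubgs2} gives that $\Gamma\cap\ker\phi$ is a uniform lattice in $\ker\phi$ (Proposition~\ref{pro:Gam_kerphi}), and discreteness of $\Theta$ follows. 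In other words, the missing idea in your proposal is precisely the passage from $\fR$ to $\ker\phi$: one must absorb the ``bad'' torus $T$ into the kernel of the action on $Y$, and this requires the Smith-theoretic input of Section~\ref{sect:smooth_crystallographic}. Your final paragraph identifies a difficulty in Step~(1) but the proposed circumvention (``use only that $\fR\fK/\fR$ is compact'') is itself the step that fails.
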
 
Hence, in particular, 
the Riemannian manifold $Y$ is divisible. 
\begin{remark} We may view Theorem \ref{thm:Theta_dis} as a parametrised version of 
Proposition $\ref{prop:key-correct}$.
\end{remark}

To prepare the proof of Theorem \ref{thm:Theta_dis}, 
we consider the kernel of the homomorphism $\phi$ in $\eqref{eq:homo_phi}$ (additional information
on $\ker \phi$ will be derived in Proposition \ref{prop:kerphi}
below). 

\smallskip
By construction of $Y$, the action of 
$\Isom(X)$ on $Y$ factors over the quotient $\Isom(X)/ \fR$.
Then we note: 

\begin{lemma} \label{lem:kerphi}
The image of\/ $\ker \phi$ in $\Isom(X) / \fR$ is compact.
In particular, $\ker \phi$ has finitely many connected components.
\end{lemma}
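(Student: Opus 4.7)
The plan is first to observe that $\fR\subseteq\ker\phi$. Indeed, by the very construction of $Y=X/\fR$, the homomorphism $\phi$ in \eqref{eq:homo_phi} factors through $\Isom(X)/\fR$: for $r\in\fR$ one has $\phi(r)(\mathsf{q}(x))=\mathsf{q}(r\cdot x)=\mathsf{q}(x)$. Since $\fR$ is a closed connected normal subgroup of $\Isom(X)^0$ and is characteristic there, it is normal in $\Isom(X)$, so $\Isom(X)/\fR$ is a Hausdorff topological group and the image of $\ker\phi$ in $\Isom(X)/\fR$ is precisely the topological quotient $\ker\phi/\fR$. Thus the task reduces to showing that $\ker\phi/\fR$ is compact.

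The main idea is to use the defining property of $\ker\phi$ together with properness of $\Isom(X)$. An element $g\in\ker\phi$ induces the identity on $Y=X/\fR$, hence $\mathsf{q}(g\cdot x)=\mathsf{q}(x)$ for all $x\in X$; equivalently $g$ maps each $\fR$-orbit to itself. Fixing a basepoint $x_0\in X$ and using that $\fR$ acts transitively on $\fR\cdot x_0$, there exists $r\in\fR$ with $g\cdot x_0=r\cdot x_0$, whence $r^{-1}g$ belongs to $\ker\phi\cap\Isom(X)_{x_0}$. I would then record this as the decomposition
\[
  \ker\phi \;=\; \fR\cdot \bigl(\ker\phi\cap\Isom(X)_{x_0}\bigr).
\]

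To conclude, I would invoke Myers--Steenrod (as recalled in Section \ref{oliverrework}): $\Isom(X)$ acts properly on $X$, so the isotropy group $\Isom(X)_{x_0}$ is compact, and therefore so is its closed subgroup $\ker\phi\cap\Isom(X)_{x_0}$. By the decomposition above, the projection $\ker\phi\to\ker\phi/\fR$ sends this compact subgroup onto all of $\ker\phi/\fR$, proving the first claim. For the second claim, since $\fR$ is connected it is contained in $(\ker\phi)^0$, so the component group $\ker\phi/(\ker\phi)^0$ arises as a continuous image of the compact group $\ker\phi/\fR$. It is therefore both compact and discrete, hence finite.

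The only mildly delicate point I anticipate will be verifying that the set-theoretic identity $\ker\phi=\fR\cdot K$ with $K=\ker\phi\cap\Isom(X)_{x_0}$ compact actually implies compactness of the topological quotient $\ker\phi/\fR$. This is immediate from the continuity and surjectivity of $K\twoheadrightarrow \ker\phi/\fR$, but it does rely on $\fR$ being closed in $\Isom(X)$, which is why I would make the normality and closedness of $\fR$ in $\Isom(X)$ explicit at the outset.
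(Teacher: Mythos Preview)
Your proof is correct. The paper takes a slightly different route: it invokes Lemma~\ref{lem:properquotient} to conclude that the induced action of $\Isom(X)/\fR$ on $Y=X/\fR$ is \emph{proper}, and then simply observes that the kernel of any proper action is compact; since this kernel is exactly $\ker\phi/\fR$, the first claim follows at once. Your argument bypasses Lemma~\ref{lem:properquotient} (whose proof uses the slice theorem for the $\fR$-action) by writing down the explicit decomposition $\ker\phi=\fR\cdot(\ker\phi\cap\Isom(X)_{x_0})$ and using only compactness of the isotropy $\Isom(X)_{x_0}$. In effect you are reproving, in this special case, the consequence of Lemma~\ref{lem:properquotient} that is actually needed. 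Both approaches rest on properness of the $\Isom(X)$-action on $X$; yours is a touch more elementary, while the paper's fits into the general framework it has already set up and will use again (e.g.\ in Theorem~\ref{thm:structure}~(5)).
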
 

\begin{proof} 
Note that, by Lemma \ref{lem:properquotient}, $\Isom(X)/ \fR$
acts properly on $Y$. In particular, 
the kernel of this  action is compact. This
shows that $\ker \phi$ has compact image in $\Isom(X)/ \fR$.
\end{proof}

Recall that, by 
Lemma \ref{lem:isometry_lattices}, $$ \Gamma_{0} = \fG \cap \Gamma$$ 
is a uniform lattice in $\fG = \Isom(X)^0$  and consider  the projection homomorphism
 $$\psi: \fG \ra \fG/ \fR \; . $$ 
Note that by the proof of Proposition  \ref{prop:key-correct}, the identity component 
$$ T = \ac{\psi(\Gamma_{0})}^{\, 0} \,  \leq  \;  \fG/ \fR$$  is a compact torus. 

\begin{lemma} $T$ acts trivially on $Y$.  \label{lem:TKtrivial}
\end{lemma}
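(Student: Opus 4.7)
The plan is to apply Lemma \ref{lem:torus_trivial_connected_new} to the Riemannian principal $\fR_{0}$-bundle $\mathsf{q} : X \to Y$. By Proposition \ref{pro:solabel} the total space and the base are contractible manifolds, and by hypothesis $\Gamma$ acts smoothly and properly discontinuously on $X$ with compact quotient; via the homomorphism $\phi : \Isom(X) \to \Isom(Y)$ this action descends equivariantly to a smooth action on $Y$. What remains is to identify a compact torus on $Y$, produced from $T \leq \fG/\fR$, that is normalized by the image $\phi(\Gamma) \leq \Isom(Y)$.

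First I would make the action of $T$ on $Y$ explicit. Since $\fR$ is normal in $\fG$, the $\fG$-action on $Y$ factors through a smooth homomorphism $\alpha : \fG/\fR \to \Isom(Y)$ with $\phi|_{\fG} = \alpha \circ \psi$. The image $\alpha(T)$ is then a connected compact abelian subgroup of $\Isom(Y)$, hence itself a compact torus acting smoothly on $Y$. Showing $\alpha(T) = \{1\}$ is equivalent to the claim of the lemma.

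Next, I would verify the normalization hypothesis. Since $\fG = \Isom(X)^{0}$ is characteristic in $\Isom(X)$, conjugation by any $\gamma \in \Gamma$ restricts to an automorphism $\mathrm{Ad}(\gamma)$ of $\fG$, which preserves the characteristic subgroup $\fR$ and hence induces an automorphism $\overline{\mathrm{Ad}(\gamma)}$ of $\fG/\fR$. The subgroup $\Gamma_{0} = \Gamma \cap \fG$ is normal in $\Gamma$ (since $\Gamma$ normalizes $\fG$), so $\overline{\mathrm{Ad}(\gamma)}$ preserves $\psi(\Gamma_{0})$; being a homeomorphism of $\fG/\fR$, it then preserves both the closure $\overline{\psi(\Gamma_{0})}$ and its identity component $T$. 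Transporting through $\alpha$ via the identity $\alpha \circ \overline{\mathrm{Ad}(\gamma)} = \mathrm{Ad}(\phi(\gamma)) \circ \alpha$, this yields $\phi(\gamma)\,\alpha(T)\,\phi(\gamma)^{-1} = \alpha(T)$ for every $\gamma \in \Gamma$, so $\phi(\Gamma)$ normalizes $\alpha(T)$ in $\Isom(Y) \subset \Diff(Y)$.

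With these ingredients in place, Lemma \ref{lem:torus_trivial_connected_new} applied to the bundle $\mathsf{q}$, the $\Gamma$-action, and the torus $\alpha(T)$ gives $\alpha(T) = \{1\}$, i.e., $T$ acts trivially on $Y$. The only delicate step is the bookkeeping of the last paragraph: tracking how conjugation by elements of $\Gamma$ descends through the quotient $\fG/\fR$ and commutes with the closure/identity-component construction defining $T$. Everything else is a direct invocation of results already established.
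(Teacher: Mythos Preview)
Your proof is correct and follows exactly the paper's approach: invoke Lemma~\ref{lem:torus_trivial_connected_new} for the bundle $\mathsf{q}:X\to Y$ after checking that the torus action on $Y$ is normalized by the image of $\Gamma$. The paper's version simply asserts this normalization in one sentence, whereas you spell out the bookkeeping (conjugation by $\Gamma$ preserves $\Gamma_0$, hence $\psi(\Gamma_0)$, hence its closure and identity component, and this transports through $\alpha$); that extra detail is accurate and harmless.
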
 
\begin{proof} 
By Proposition \ref{pro:solabel}, $Y$ is a contractible  manifold. 
The action of the compact torus  $T$ on $Y$ is normalized by the induced action of 
$\Gamma$ on $Y$. Hence,  Lemma \ref{lem:torus_trivial_connected_new} implies that $T$ acts trivially on $Y$.
\end{proof}


\begin{proposition}[Radical kernel is solvable lattice] \label{pro:Gam_kerphi}
The group $$ \Delta =  \Gamma \cap  \ker \phi$$ is a uniform
lattice in  $\ker \phi$. In particular, $\Delta$  
is a virtually polycyclic group.
\end{proposition}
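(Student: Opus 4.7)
The plan is to deduce the statement from Lemma \ref{lem:uniformsubgs2}, applied to $G=\Isom(X)$, in which $\Gamma$ is a uniform lattice by Lemma \ref{lem:isometry_lattices}, with closed normal subgroup $L=\fR$ and projection $\nu:G\to G/\fR$. Since $\fR$ acts along the fibres of $\mathsf{q}$, one has $\fR\subseteq\ker\phi$, and I take $K:=\ker\phi/\fR$. This $K$ is closed and normal in $G/\fR$ because $\ker\phi$ is, and compact by Lemma \ref{lem:kerphi}; its preimage under $\nu$ is exactly $\ker\phi$. So once the hypothesis $\ac{\nu(\Gamma)}^{\,0}\subseteq K$ is verified, Lemma \ref{lem:uniformsubgs2} will immediately give that $\Delta=\Gamma\cap\nu^{-1}(K)=\Gamma\cap\ker\phi$ is a uniform lattice in $\ker\phi$.

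Verifying this containment is the key step, and it is where Lemma \ref{lem:TKtrivial} is used. Since $\fG/\fR$ is the open-closed identity component of $G/\fR$, the identity component of $\ac{\nu(\Gamma)}$ lies in $\fG/\fR$; moreover openness gives $\ac{\nu(\Gamma)}\cap\fG/\fR=\ac{\nu(\Gamma)\cap\fG/\fR}=\ac{\psi(\Gamma_0)}$, so $\ac{\nu(\Gamma)}^{\,0}=\ac{\psi(\Gamma_0)}^{\,0}=T$. By Lemma \ref{lem:TKtrivial}, $T$ acts trivially on $Y$ and hence lies in the kernel of the induced homomorphism $G/\fR\to\Isom(Y)$, which is exactly $K$. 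This is the only place where the earlier Smith-theoretic input (Lemma \ref{lem:torus_trivial_connected_new}, via Lemma \ref{lem:TKtrivial}) is needed, and I expect it to be the main conceptual obstacle; everything else is a formal application of the preparatory material of Section \ref{sec:prel}.

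For the virtual polycyclicity, the plan is to descend from $\ker\phi$ to $\fR$. The image of the discrete group $\Delta$ in the compact quotient $\ker\phi/\fR$ is discrete and compact, hence finite, so $\Delta\cap\fR$ has finite index in $\Delta$. In particular $\fR\cdot\Delta$ is closed in $\ker\phi$ (as a union of finitely many cosets of $\fR$), so the second part of Lemma \ref{lem:uniformsubgs}, applied inside $\ker\phi$, shows that $\Delta\cap\fR$ is a uniform lattice in the connected solvable Lie group $\fR$. By the classical theorem of Mostow, a lattice in a connected solvable Lie group is polycyclic, which gives the virtual polycyclicity of $\Delta$.
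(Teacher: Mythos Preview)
Your argument for the first conclusion---that $\Delta$ is a uniform lattice in $\ker\phi$---is correct and is essentially the paper's argument: both verify that $\ac{\nu(\Gamma)}^{\,0}=T$ lies in the compact normal subgroup $\ker\phi/\fR$ via Lemma~\ref{lem:TKtrivial}, and then invoke Lemma~\ref{lem:uniformsubgs2}. (If anything, your application of Lemma~\ref{lem:uniformsubgs2} directly to $K=\ker\phi/\fR$ is slightly cleaner than the paper's detour through $(\ker\phi)^0$ followed by the observation that $\ker\phi$ has finitely many components.)

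The second part, however, contains a genuine gap. You assert that the image of the discrete group $\Delta$ in the compact quotient $\ker\phi/\fR$ is discrete, but this does not follow: the image of a discrete subgroup under a quotient homomorphism need not be discrete. Concretely, nothing proved so far rules out a situation such as $(\ker\phi)^0=\RR\times\SU(2)$ with $\fR=\RR$ and $\Delta_0=\{(n,g^n):n\in\ZZ\}$ for $g\in\SU(2)$ of infinite order; here $\Delta_0$ is a uniform lattice, yet its image in $\SU(2)$ is dense, and $\Delta_0\cap\fR=\{0\}$ is not a lattice in $\fR$. Equivalently, discreteness of the image would amount (via Lemma~\ref{lem:uniformsubgs}) to $\fR\Delta$ being closed, i.e.\ to $\Delta\cap\fR$ being a lattice in $\fR$, and this is precisely what you are trying to establish. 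The paper circumvents this by a different route: since $\ker\phi$ is an extension of a compact group by the solvable group $\fR$, it is amenable, and a discrete (uniform) subgroup of an amenable Lie group with finitely many components is virtually polycyclic (the paper cites \cite[Lemma~2.2(b)]{Milnor}). That argument does not require knowing anything about $\Delta\cap\fR$.
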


\begin{proof} 
The image of  $\ker \phi$ is a normal 
subgroup of  $\Isom(X) / \fR$  and it is compact
by Lemma \ref{lem:kerphi}.  By Lemma \ref{lem:TKtrivial}, 
it also contains $$ T = \ac{\psi(\Gamma_{0})}^{\, 0} . $$  Hence,
by Lemma \ref{lem:uniformsubgs2}, $\Gamma \cap (\ker \phi)^0$
is a uniform lattice in $(\ker \phi)^0$. Since $\ker \phi$ has only
finitely many connected components, $\Gamma \cap \ker \phi$
is  a uniform lattice in  $\ker \phi$. Since $\ker \phi$ is a Lie group, 
which is an extension of a compact group by a solvable group, it follows
that the Lie group $\ker \phi$ is amenable.
As is well known, the discreteness of $\Gamma$ together with amenability of $\ker \phi$
implies that  $ \Gamma \cap  \ker \phi$ is virtually polycyclic.
(See, for example \cite[Lemma 2.2 (b)]{Milnor}.) 
\end{proof} 

We are ready for the 

\begin{proof}[Proof of Theorem \ref{thm:Theta_dis}] 
Since  $\Gamma \cap \ker \phi$ 
is a uniform lattice in $\ker \phi$,   the image $\Theta$ of
$\Gamma$ in $\phi(\Isom(X)) = \Isom(X)/ \ker \phi$ is a uniform lattice, see
Lemma \ref{lem:uniformsubgs}. 
Since $\Isom(X)/ \fR$ acts properly on $Y$, 
$\Isom(X)/ \ker \phi$ is a closed subgroup in
$\Isom(Y)$.  We conclude that $\Theta$ is discrete
in $\Isom(Y)$. Hence, $\Theta$ acts properly
discontinuously on $Y$, and, in particular, 
$Y /\Theta$ is Hausdorff. The natural surjective map $X/\Gamma \ra Y/\Theta$ shows that $Y/\Theta$ is compact.
\end{proof}

\subsection{Action of $\Isom(X)$ on the radical quotient} \label{sect:action_radicalquotient}
We choose a Levi decomposition (cf.\ Section \ref{sect:Levi_dec}) 
$$ \Isom(X)^{0} \, =  \, \fR \cdot \fS \, , $$ where $\fS$ is a semisimple subgroup and
 $\fR$ is the solvable radical of $\Isom(X)^{0}$. 
%
%
Moreover,  let 
 $$ \cK  \, \leq \, \fS $$ denote the maximal compact \emph{normal}
subgroup of $ \fS$. 

\smallskip 
Now  Theorem \ref{thm:Theta_dis}  combined with Theorem  \ref{thm:compact_trivial} implies:
\begin{proposition} \label{prop:Kisinthekernel}
The action of\/ $\cK$ on the quotient $Y$,  which is induced by $$ \phi: \Isom(X) \ra \Isom(Y)$$ is trivial. In particular, every finite subgroup in the center of\/ $\fS$  acts trivially on $Y$.
\end{proposition}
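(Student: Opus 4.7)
The plan is to apply Theorem \ref{thm:compact_trivial} to the compact Lie group $\phi(\cK)$ acting on $Y = X/\fR$, noting that this action is normalized by the properly discontinuous action of $\Theta$.

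First, I would verify the hypotheses of Theorem \ref{thm:compact_trivial}. By Proposition \ref{pro:solabel}, $Y$ is a contractible manifold, hence acyclic and (being simply connected) orientable. By Theorem \ref{thm:Theta_dis}, $\Theta$ acts smoothly and properly discontinuously on $Y$ with compact quotient. Since $\cK$ is a compact subgroup of $\Isom(X)^{0}$, the image $\phi(\cK)$ is a compact subgroup of $\Isom(Y)$, so it acts smoothly and \emph{faithfully} on $Y$ as a subgroup of the isometry group.

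The key step is to show that $\Theta$ normalizes $\phi(\cK)$. Since $\Isom(X)^{0}$ is normal in $\Isom(X)$, the group $\Gamma$ acts on $\Isom(X)^{0}$ by conjugation, and since the solvable radical $\fR$ is characteristic in $\Isom(X)^{0}$, this action descends to an action on the semisimple quotient $\Isom(X)^{0}/\fR$. The image of $\cK$ in $\Isom(X)^{0}/\fR$ is the maximal compact normal subgroup of this semisimple Lie group, which is a characteristic subgroup; hence $\Gamma$ preserves this image. Since $\fR \leq \ker \phi$, the restriction $\phi|_{\Isom(X)^{0}}$ factors through $\Isom(X)^{0}/\fR$, and the factored map is $\Gamma$-equivariant for the conjugation action of $\Gamma$ on its source and the conjugation action of $\Theta = \phi(\Gamma)$ on $\Isom(Y)$. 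It follows that $\Theta$ normalizes $\phi(\cK)$. Now Theorem \ref{thm:compact_trivial} forces $\phi(\cK) = \{1\}$, which proves the main claim.

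For the final statement, let $F$ be any finite subgroup in the center of $\fS$. Being central, $F$ is normal in $\fS$, and being finite it is compact; by maximality of $\cK$ among compact normal subgroups, $F \leq \cK$. Consequently $\phi(F) \leq \phi(\cK) = \{1\}$, so $F$ acts trivially on $Y$. I expect the main subtlety to be the fact that the Levi subgroup $\fS$ is not canonically determined inside $\Isom(X)^{0}$, so one cannot directly argue that $\cK$ itself is $\Gamma$-normalized; this is circumvented by passing to the semisimple quotient $\Isom(X)^{0}/\fR$, where the image of $\cK$ is canonical and characteristic, and then pushing forward to $\Isom(Y)$.
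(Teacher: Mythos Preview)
Your proof is correct and follows precisely the approach the paper indicates (the paper's own proof is just the one-line remark that Theorem~\ref{thm:Theta_dis} combined with Theorem~\ref{thm:compact_trivial} gives the result); you have filled in exactly the details the paper suppresses, including the normalization issue. One small imprecision: the image of $\cK$ in $\Isom(X)^{0}/\fR$ need only be \emph{contained in} the maximal compact normal subgroup $\bar\cK$ there, not equal to it (e.g.\ if $\fS$ has infinite discrete center and $\fS\cap\fR$ is a proper finite-index subgroup of it, $\cK$ can be trivial while $\bar\cK$ is not); but since $\bar\cK$ is characteristic, your argument applied to $\bar\cK$ still yields $\phi(\cK)\subseteq\phi(\bar\cK)=\{1\}$, so the conclusion stands.
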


\noindent
That is, we have shown that $\cK$ is contained in $\ker \phi$. 

\smallskip 
Strengthening Lemma \ref{lem:kerphi}, 
we add the following observation: 

\begin{proposition} \label{ClaimA}
The image of\/ $\ker \phi$ in\/ $\Isom(X) / \fR$ 
is the unique maximal compact normal subgroup of\/ $\Isom(X) / \fR$.
In particular, $\ker \phi$ has finitely many connected components.
\end{proposition}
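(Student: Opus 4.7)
The plan is to apply Theorem~\ref{thm:compact_trivial} to the induced action on the radical quotient $Y = X/\fR$ of an arbitrary compact normal subgroup of $L := \Isom(X)/\fR$. First I would record that, since $\fR$ acts trivially on $Y$ by construction, the homomorphism $\phi : \Isom(X) \to \Isom(Y)$ factors as $\phi = \bar{\phi}\circ \psi$, where $\psi : \Isom(X) \to L$ is the quotient map and $\bar{\phi} : L \to \Isom(Y)$ is the induced continuous homomorphism. In particular $\fR \subseteq \ker \phi$, so the image of $\ker \phi$ in $L$ coincides with $\ker \bar{\phi}$, which by Lemma~\ref{lem:kerphi} is a compact normal subgroup of $L$.

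Next, given any compact normal subgroup $K \leq L$, I would consider the compact subgroup $\bar{\phi}(K) \leq \Isom(Y)$, which, as a subgroup of the isometry group, acts faithfully on $Y$. Normality of $K$ in $L$ together with the inclusion $\Theta = \phi(\Gamma) = \bar{\phi}(\psi(\Gamma)) \subseteq \bar{\phi}(L)$ implies that $\bar{\phi}(K)$ is normalized by $\Theta$. By Proposition~\ref{pro:solabel}(2) the radical quotient $Y$ is a contractible smooth manifold (so in particular orientable and acyclic), and by Theorem~\ref{thm:Theta_dis} the group $\Theta$ acts smoothly and properly discontinuously on $Y$ with compact quotient. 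These are precisely the hypotheses of Theorem~\ref{thm:compact_trivial}, which therefore forces $\bar{\phi}(K) = \{1\}$, i.e.\ $K \subseteq \ker \bar{\phi}$. This yields maximality; uniqueness of the maximal compact normal subgroup then follows automatically, since the product of two compact normal subgroups of $L$ is again compact and normal.

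For the final statement about connected components, I would argue as follows. Because $\fR$ is connected and contained in $\ker \phi$, the quotient $\ker \phi /\fR$ is well defined and equals $\ker \bar{\phi}$, hence is compact and has only finitely many connected components. Since $\fR$ is connected, $(\ker\phi)^{0}/\fR = (\ker \phi/\fR)^{0}$, so $\ker \phi/(\ker \phi)^{0} \cong (\ker \phi/\fR)/(\ker \phi/\fR)^{0}$ is finite.

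The only genuine step is the application of Theorem~\ref{thm:compact_trivial}, and I expect that to be the main conceptual obstacle: one must carefully verify all of its hypotheses (orientability and acyclicity of $Y$, properness and discreteness of the $\Theta$-action with compact quotient, and the normalization property), each of which is now available from the results in Section~\ref{sect:rad1} and Section~\ref{sect:rad2}. Everything else is formal bookkeeping about the factorization $\phi = \bar{\phi}\circ \psi$ and the fact that $\fR \subseteq \ker \phi$.
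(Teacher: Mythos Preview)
Your proposal is correct and follows essentially the same route as the paper: take an arbitrary compact normal subgroup of $\Isom(X)/\fR$, push it into $\Isom(Y)$ where it is normalized by $\Theta$, and kill it with Theorem~\ref{thm:compact_trivial}, then invoke Lemma~\ref{lem:kerphi} for compactness of the image of $\ker\phi$. Your explicit factorization $\phi=\bar\phi\circ\psi$ and the argument for finitely many components via $\ker\phi/\fR\cong\ker\bar\phi$ are a bit more detailed than the paper's version, but the logic is the same.
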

\begin{proof}  Let $\bar \cK_{1} $ be the image in $\Isom(Y)$  of any compact normal subgroup $\cK_{1}$ of $\Isom(X) / \fR$.  As before, let  $\Theta$ be the image of $\Gamma$.
By Theorem \ref{thm:Theta_dis}, $\Theta$ acts properly discontinuously with
compact quotient on $Y$ and $\Theta$ normalizes the compact group 
$\bar \cK_{1}$. 
Therefore Theorem  \ref{thm:compact_trivial}  implies 
$\bar \cK_{1} = \{ 1 \}$, which implies that $ \cK_{1} $ is contained 
in the image of $\ker \phi$. On the other hand, Lemma \ref{lem:kerphi} asserts that 
the image of $\ker \phi$ is compact. Since the image is also a normal subgroup of 
$\Isom(X) / \fR$, our claim follows. 
\end{proof}



%

Consider $\fR_{0} \leq \fR$ as in Proposition \ref{pro:solabel}.
\begin{lemma}\label{lem:difftrivial}
Let\/ $\fD$ be a compact Lie group of diffeomorphisms
of $X$ which centralizes $\fR$ and which acts trivially on $Y$.
Then $\fD = \{ 1 \} $. 
\end{lemma}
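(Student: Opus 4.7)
The plan is to exploit the principal bundle structure of $\mathsf{q}: X \to Y$ established in Proposition~\ref{pro:solabel}. Since $\fD$ acts trivially on the base $Y = X/\fR$, it preserves every fiber of $\mathsf{q}$. By Proposition~\ref{pro:solabel}(1), each fiber is a free orbit of $\fR_{0}$, the simply connected characteristic subgroup of $\fR$ given by Lemma~\ref{lem:chara-R0}. Fix a fiber $F$ and a basepoint $x_{0} \in F$; the assignment $r \mapsto r \cdot x_{0}$ is an $\fR_{0}$-equivariant diffeomorphism identifying $F$ with $\fR_{0}$, under which the $\fR_{0}$-action on $F$ becomes the left-regular action of $\fR_{0}$ on itself.

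Since $\fD$ centralizes $\fR$ and hence $\fR_{0}$, the restriction of each $d \in \fD$ to $F$ is a self-diffeomorphism commuting with the left-regular $\fR_{0}$-action. A one-line check identifies the centralizer of this action inside $\Diff(\fR_{0})$ with $\fR_{0}$ acting by right translations: an equivariant map $\varphi: \fR_{0} \to \fR_{0}$ satisfies $\varphi(r) = r\, \varphi(e)$, hence is right multiplication by $\varphi(e)$. This produces a continuous homomorphism $\rho_{F}: \fD \to \fR_{0}$.

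The decisive input now is that $\fR_{0}$, being simply connected solvable, contains no nontrivial compact subgroup—the very fact already invoked in Proposition~\ref{pro:solabel} to conclude that $\fR_{0}$ acts freely on $X$. Since $\fD$ is compact, $\rho_{F}(\fD) = \{1\}$, so $\fD$ acts trivially on $F$. As $F$ was an arbitrary fiber, $\fD$ acts trivially on every fiber of $\mathsf{q}$; combined with the trivial action on $Y$, it acts trivially on all of $X$. Because $\fD$ is a group of diffeomorphisms of $X$, the action is faithful, and we conclude $\fD = \{1\}$. There is no real obstacle here: the only non-automatic step is the identification of the centralizer of the simply transitive action, which is a routine consequence of simple transitivity.
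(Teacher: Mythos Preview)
Your argument is correct and follows essentially the same route as the paper: both use that $\fD$ preserves each fiber (from triviality on $Y$), centralizes the simply transitive $\fR_{0}$-action there (from centralizing $\fR$), hence lands in the centralizer of left translations, which is isomorphic to $\fR_{0}$ and thus contains no nontrivial compact subgroup. One tiny quibble: the map $\rho_{F}$ you write down is an anti-homomorphism rather than a homomorphism, but this is irrelevant since its image is still a compact subgroup of $\fR_{0}$.
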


\begin{proof}  Since $\fD$ acts trivially on $Y$, $\fD$ 
centralizes the simply transitive action of $\fR_{0}$ 
on each fiber of the map $X \ra Y$. Since, $\fR_{0}$
is a simply connected solvable Lie group, its centralizer in the diffeomorphism group 
of the fiber (being isomorphic to $\fR_{0}$) does not contain any
non-trivial compact subgroup. 
 Hence the induced action of $\fD$ 
on each fiber must be trivial. Therefore, $\fD$ acts trivially
on $X$. This implies $\fD = \{ 1 \}$. 
\end{proof}

In the view of Proposition \ref{prop:Kisinthekernel}, we deduce:

\begin{proposition} \label{prop:nocompactfactor}
Any compact subgroup of\/ $\ker \phi$ acts faithfully on $\fR$ by conjugation.  In particular, this holds for the 
maximal compact normal subgroup  $\cK$ of\/ $\fS$.
\end{proposition}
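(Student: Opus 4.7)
The plan is to reduce the faithfulness statement directly to Lemma \ref{lem:difftrivial}, which already encodes the nontrivial geometric input about centralizers of $\fR$ acting trivially on the radical quotient $Y$. First, let $\fD \leq \ker \phi$ be any compact subgroup. Since $\fR$ is normal in $\Isom(X)$, conjugation induces a continuous homomorphism
\[
c : \fD \lra \Aut(\fR), \qquad c(d)(r) = d r d^{-1}.
\]
Let $\fD_0 = \ker c$. Then $\fD_0$ is a compact Lie subgroup of $\Isom(X)$ that centralizes $\fR$; moreover, since $\fD_0 \leq \fD \leq \ker \phi$, the group $\fD_0$ acts trivially on $Y = X/\fR$. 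All hypotheses of Lemma \ref{lem:difftrivial} are satisfied, and we conclude $\fD_0 = \{1\}$. Hence $c$ is injective, which is exactly the claim that $\fD$ acts faithfully on $\fR$ by conjugation.

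For the ``in particular'' clause, I would invoke Proposition \ref{prop:Kisinthekernel}, which asserts that the maximal compact normal subgroup $\cK$ of $\fS$ acts trivially on $Y$, i.e.\ $\cK \leq \ker \phi$. Since $\cK$ is compact, the general statement just established applies to $\fD = \cK$ and yields that $\cK$ acts faithfully on $\fR$ by conjugation.

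I do not expect any serious obstacle: the delicate content (the absence of compact subgroups centralizing the simply transitive action of $\fR_0$ on fibers of $\mathsf{q}$) is already packaged into Lemma \ref{lem:difftrivial}, and the present proposition is essentially a repackaging of that lemma applied to the kernel of the conjugation representation. The only point worth checking carefully is that $\fD_0$ is genuinely closed in $\Isom(X)$ and therefore compact---this is automatic since $c$ is continuous and $\fD$ is compact---and that $\fD_0$ remains inside $\ker \phi$, which is immediate.
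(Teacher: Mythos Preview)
Your proposal is correct and follows essentially the same route as the paper: the paper states Proposition~\ref{prop:nocompactfactor} immediately after Lemma~\ref{lem:difftrivial} with only the remark ``In the view of Proposition~\ref{prop:Kisinthekernel}, we deduce,'' leaving the reader to supply precisely the argument you wrote out---apply Lemma~\ref{lem:difftrivial} to the kernel of the conjugation map, and invoke Proposition~\ref{prop:Kisinthekernel} for the inclusion $\cK \leq \ker\phi$. Your write-up makes explicit what the paper leaves implicit.
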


We derive
some additional  observations on the kernel of 
$$  \phi: \Isom(X) \ra \Isom(Y) \; . $$  

\begin{proposition} \label{prop:kerphi}  
%
The  following hold:
\begin{enumerate} 
\item $\ker \phi$ acts faithfully on each fiber of\/ $\mathsf{q}: X \ra Y$. 
\item $(\ker \phi)^0 = \fR \cdot \cK^{0} \;  $. 
\item $\cK$ is contained in $\ker \phi $, and\/
$ \fR \cdot \cK$ is of finite index in $\ker \phi \cap \fG$. 
\end{enumerate} 
\end{proposition}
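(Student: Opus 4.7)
The proof will treat the three parts in order, using throughout that $\mathsf{q}\colon X\to Y$ is a Riemannian submersion (Section \ref{sect:rad2}) and that by Proposition \ref{ClaimA} the image of $\ker\phi$ in $\fG/\fR$ is a compact normal subgroup. For (1), I will exploit the orthogonal splitting $T_{x_0}X=\mathcal V_{x_0}\oplus\mathcal H_{x_0}$ into vertical and horizontal subspaces at any $x_0\in F:=\mathsf{q}^{-1}(\mathsf{q}(x_0))$, where $d\mathsf{q}_{x_0}|_{\mathcal H_{x_0}}$ is a linear isometry onto $T_{\mathsf{q}(x_0)}Y$. Any $g\in\ker\phi$ preserves each fiber, and as an isometry preserves the splitting at any fixed point. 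Assuming $g$ acts trivially on $F$, at $x_0\in F$ we have $dg_{x_0}|_{\mathcal V_{x_0}}=\mathrm{id}$ directly, while the intertwining $d\mathsf{q}_{x_0}\circ dg_{x_0}=d\phi(g)_{\mathsf{q}(x_0)}\circ d\mathsf{q}_{x_0}=d\mathsf{q}_{x_0}$ forces $dg_{x_0}|_{\mathcal H_{x_0}}=\mathrm{id}$. Hence $dg_{x_0}=\mathrm{id}$, and an isometry fixing a point with trivial derivative is the identity.

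For (2), both $\fR$ and $\cK^0$ are connected subgroups of $\ker\phi$ (the latter by Proposition \ref{prop:Kisinthekernel}), giving $\fR\cdot\cK^0\subseteq(\ker\phi)^0$. For the converse, let $\pi\colon\fG\to\fG/\fR$ denote the projection. Via the Levi decomposition, $\fG/\fR\cong\fS/(\fS\cap\fR)$ is semisimple and its maximal connected compact normal subgroup is $\pi(\cK^0)$. The image $\pi((\ker\phi)^0)$ is connected compact normal in $\fG/\fR$ (compactness from Proposition \ref{ClaimA}), so it is contained in $\pi(\cK^0)$. Since $\ker\pi=\fR\subseteq(\ker\phi)^0$, this yields $(\ker\phi)^0\subseteq\fR\cdot\cK^0$.

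For (3), the inclusion $\cK\subseteq\ker\phi$ is Proposition \ref{prop:Kisinthekernel}, and $\cK\subseteq\fS\subseteq\fG$, so $\fR\cdot\cK\subseteq\ker\phi\cap\fG$. The quotient $(\ker\phi\cap\fG)/\fR$ is compact in $\fG/\fR$ (Proposition \ref{ClaimA}), and by (2) its identity component is $\fR\cK^0/\fR$; since $\cK/\cK^0$ is finite, $\fR\cK/\fR$ contains this identity component with finite index, so $\fR\cdot\cK$ has finite index in $\ker\phi\cap\fG$.

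\emph{Main obstacle.} The delicate point is the identification in (2) of $\pi(\cK^0)$ with the maximal connected compact normal subgroup of $\fG/\fR$. This rests on the observation that the projection $\fS\to\fG/\fR$ has discrete (central) kernel $\fS\cap\fR$, so any connected compact normal subgroup of $\fG/\fR$ lifts to a connected compact normal subgroup of $\fS$, which by the definition of $\cK$ and connectedness must lie in $\cK^0$.
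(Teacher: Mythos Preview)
Your proof is correct. For parts (2) and (3) your argument follows essentially the same route as the paper: project $(\ker\phi)^0$ to $\fG/\fR$, identify the image with the maximal connected compact normal subgroup there, and pull back; the paper organises this by choosing a Levi subgroup $H$ of $(\ker\phi)^0$ inside $\fS$ and showing directly that $H$ is compact (via $\pi_1(\bar\kappa^0)$ finite), hence $H=\cK^0$, but the content is the same. One small point you might make explicit in your ``Main obstacle'' step: the lift $\tilde C\leq\fS$ of a connected compact normal $C\leq\fG/\fR$ is compact because $C$, being a connected normal subgroup of a semisimple group, is itself semisimple, so $\tilde C$ has compact semisimple Lie algebra and is therefore compact by Weyl's theorem.

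For part (1), however, you take a genuinely different and more elementary route. The paper argues via the algebraic structure: the kernel $T_p$ of the action on a fiber centralises the simply transitive $\fR_0$-action on that fiber, and is compact (properness); then Lemma~\ref{lem:difftrivial}/Proposition~\ref{prop:nocompactfactor} forces $T_p=\{1\}$, since the centraliser of a simply transitive action of a simply connected solvable group has no nontrivial compact subgroups. Your argument bypasses this entirely by using only the Riemannian submersion: an element $g\in\ker\phi$ trivial on a fiber fixes a point, has trivial derivative on the vertical space by hypothesis and on the horizontal space by $\phi$-equivariance of $d\mathsf{q}$, and hence is the identity by the standard rigidity of isometries. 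This is cleaner and avoids any appeal to the solvable structure of the fibers, at the cost of using the full Riemannian (rather than merely smooth) setup.
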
 


\begin{proof} 
Recall that $\fR_{0}$ acts simply transitively on the fibers of 
$\mathsf{q}$ and let  $\fR_{0} \cdot p$, $p \in X$, be a fiber. Since 
$\fR_{0}$ is characteristic in $\Isom(X)$, $\ker \phi$ normalizes $\fR_{0}$. Consider the subgroup $T_{p}$ of elements in $\ker \phi$, which act trivially on the orbit $\fR_{0} \cdot p$. Since $\fR_{0} $ acts freely, $T_{p}$ centralizes $\fR_{0}$ in $\Isom(X)$.  Also $T_{p}$ is compact, since $\ker \phi$ acts properly on $X$. Therefore, with Proposition \ref{prop:nocompactfactor} we have (1).

Clearly,  $\fS \cap \ker \phi$ is a normal subgroup of $\fS$, and  by Proposition \ref{ClaimA}
, the homomorphism
$$   \ker \phi \; \to \; \Isom(X) /\fR $$ 
projects $\fS \cap \ker \phi$ 
onto the maximal compact normal subgroup $$ \bar \kappa \, \leq  \, \fG/\fR = \fS \big/ \fR \cap \fS \; .$$
The induced  surjective homomorphism $$ \fS \cap \ker \phi \to \bar \kappa$$  has kernel 
$ \fR \cap \fS$, which is central in $\fS$. 

Since $(\ker \phi)^0$ is a normal subgroup of $\fG = \Isom(X)^{0}$, $\fR \subseteq \ker \phi$ is also the solvable radical of $(\ker \phi)^0$.  Let $H$ be a Levi subgroup of $ (\ker \phi)^0$  with $H \leq \fS$. Since $$ (\ker \phi)^0 = \fR \cdot H ,$$ the Levi subgroup  $H = (\fS \cap \ker \phi)^{0}$ projects onto $\bar \kappa^{0}$, and 
is normal in $\fS$. Since the projection has discrete kernel, 
$\bar \kappa^0$ is semisimple (and also compact). Therefore, $\pi_{1}(\bar \kappa^{0})$ is finite. This shows that the covering group $H$ is  compact. 
Since  $\cK^{0}$ is the maximal compact, connected  normal subgroup 
in $\fS$, $H \subseteq  \cK^{0}$. On the other hand, by Proposition \ref{prop:Kisinthekernel},  $\cK \subseteq \ker \phi$. Therefore, $\cK^{0} \subseteq H$. This proves (2).

Since $\bar \kappa$ is compact, it has only finitely many connected components.
Since $\ker \phi \cap \fS$ projects surjectively on
the image $\bar \kappa$ of $\ker \phi$, using (2), it follows that  $\cK^{0} (\fS \cap \fR)$ is 
of finite index in $\ker \phi \cap \fS$. Hence, also $\cK (\fS \cap \fR)$
is of finite index in $\ker \phi \cap \fS$. Therefore,  (3) follows.
\end{proof} 

%
%
%
%
%

\smallskip
The following example shows that, in general, the maximal finite normal subgroup $C$ in the center of $\fS$ can be non-trivial, even if $\fS$ has no \emph{connected} compact normal subgroup.  Of course, this can only happen provided that the isometry group has a non-trivial connected radical on which $\fS$, respectively $C$, acts faithfully. (Indeed, by Proposition \ref{prop:Kisinthekernel}, the induced action of $C$ on the radical quotient $Y$ is always trivial.) However, even if the radical $\fR$ of $\Isom(X)$ is trivial, $\fS$ can have infinite center as in Example \ref{ex:sl2R}. 

\begin{example}[Compact locally homogeneous manifolds with radical] \label{ex:clhm}
\hspace{1ex} \newline 
Let $S$ be a semisimple Lie group  of non-compact type which
is faithfully represented on a vector space $V$. 
Since $S$ is linear, its center $C$ is finite. 
Let $K$ be a maximal compact subgroup of $S$.
Since $C$ is finite, $C$ is contained in $K$. Put 
$$   G =  V \rtimes S \; . $$
Now consider the contractible manifold 
$$    X \,  =  \,  V \times_{K} S  \,  =  G / K  \; . $$
Then $G$ acts properly and faithfully  on $X$. 
Choose a $G$-invariant
Riemannian metric such that $G = \Isom(X)^0$.
Since $V$ is the solvable radical, the radical quotient
$$ Y = X / V =  S/K$$ is a \emph{Riemannian symmetric space} 
of non-compact type. 
Now choose $V$ and a uniform subgroup 
$\Theta$ of $S$ such that $\Theta$ 
 is arithmetic with respect to a lattice $\Lambda$ in $V$.
(By a classical result of Borel \cite{Borel} such $\Theta$ and $V$ always exist.) 
Then the group $\Gamma = \Lambda \rtimes \Theta$ is a discrete uniform 
subgroup of $\Isom(X)$. This  constructs a corresponding compact locally homogeneous 
quotient $X/ \, \Gamma$ with $G = \Isom(X)^0$. 
\end{example}

\subsection{Divisible Riemannian manifolds}

Summarizing the above we arrive at the following structure 
theorem for the continuous part of the isometry group of a 
 contractible Riemannian manifold which is divisible. 

\begin{theorem}[Main theorem on radicals in  isometry groups]  \label{thm:structure}
Let $X$ be a contractible Riemannian
manifold and $\Gamma \leq \Isom(X)$ a discrete subgroup 
such that $X/\Gamma$ is compact.  Let $\fR$ denote the solvable radical of\/ $\Isom(X)^0$. 
Then the following hold:  
\begin{enumerate}
\item The maximal compact normal subgroup of\/ $\Isom(X)^0$  is trivial. 
\item The nilpotent radical $\fN$ of $\Isom(X)^0$ is simply connected and $\Gamma \cap \fN$ is a uniform lattice in $\fN$. 
\item The radical quotients  $ X/ \fN$, $X/\fR$ are contractible Riemannian manifolds.  
\end{enumerate}
With respect to the Riemannian quotient metric on $X/\fR$ and 
the induced homomorphism $$ \phi: \Isom(X) \ra \Isom(X/\fR)$$  it follows further  that:
\begin{enumerate}
\item[(4)]  The image $ \Theta$   of\/ $\Gamma$ in\/ $\Isom(X/\fR)$ is discrete. 
\item[(5)]  $\Isom(X)/\fR$ acts properly on $X/\fR$.  
\item[(6)] The kernel of the action (5) is  the maximal compact normal subgroup of\/ 
$\Isom(X) /\fR  $.
\item[(7)] The image $S$ 
of\/  $\Isom(X)^0$ in $\Isom(X/\fR)$ is a 
semisimple Lie group $S$ of non-compact type without finite subgroups in its center, and 
it is a closed normal subgroup of\/ $ \Isom(X/\fR)^{0}$. (In particular, it is normal in a
finite index subgroup of\/ $\Isom(X/\fR)$.) 
\item[(8)]
Moreover, $\Theta \cap S$ is a uniform lattice in $S$. 
\end{enumerate}
\end{theorem}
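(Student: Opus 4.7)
The plan is to assemble most of the conclusions by invoking the preceding results, with the main new work concerning the structure of the semisimple image $S$ in (7) and the lattice statement (8). Claim (1) follows because any compact normal subgroup of $\fG = \Isom(X)^{0}$ is characteristic in $\fG$, hence normal in $\Isom(X)$, and Corollary~\ref{cor:no_normal_compact} then forces it to be trivial. The simply-connectedness of $\fN$ in (2) is Lemma~\ref{lem:nilradical}; for the lattice claim, Lemma~\ref{lem:isometry_lattices} gives that $\Gamma_{0} := \Gamma \cap \fG$ is a uniform lattice in $\fG$, Proposition~\ref{prop:key-correct} then produces a uniform lattice $\Gamma_{0} \cap (\fR \cK)$ in $\fR\cK$, and the classical fact that a uniform lattice in a simply connected solvable Lie group intersects its nilpotent radical in a uniform lattice (applied after killing the semisimple compact factor $\cK$ and using that $\fN$ is the nilradical of $\fR\cK$) yields $\Gamma \cap \fN$ uniform in $\fN$. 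Claim (3) combines Proposition~\ref{pro:solabel}(2) (for $X/\fR$) with Lemma~\ref{lem:nilradical} (for $X/\fN$). Claims (4), (5), (6) are respectively Theorem~\ref{thm:Theta_dis}, Lemma~\ref{lem:properquotient} (since $\fR$ is closed and normal in $\Isom(X)$), and Proposition~\ref{ClaimA}.

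For the algebraic content of (7), let $\fG = \fR \cdot \fS$ be a Levi decomposition with $\cK$ the maximal compact connected normal subgroup of $\fS$. Since $\fR \leq \ker \phi$, the image $S = \phi(\fG) = \phi(\fS)$ is a semisimple quotient of $\fS$. Proposition~\ref{prop:kerphi}(3) gives $\cK \subseteq \ker \phi$, so $S$ contains no non-trivial compact connected normal subgroup and is therefore of non-compact type. For the absence of finite central subgroups, perfectness of $\fS$ implies that any finite subgroup of $Z(S)$ lifts to a subgroup of $Z(\fS)$ which is a finite extension of $Z(\fS) \cap \ker \phi$; Proposition~\ref{prop:Kisinthekernel} states that every finite subgroup of $Z(\fS)$ already acts trivially on $Y$ and so lies in $\ker \phi$, and since $Z(\fS)$ is finitely generated abelian this forces $Z(S)$ to be torsion-free. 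Closedness of $S$ in $\Isom(Y)$ holds because $\phi(\Isom(X))$ is closed (by the argument in the proof of Theorem~\ref{thm:Theta_dis}) and $S$ is its identity component.

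The main obstacle I expect is normality of $S$ in $\Isom(Y)^{0}$. Since $\Isom(X)^{0}$ is characteristic in $\Isom(X)$, the image $S$ is normalized by $\phi(\Isom(X))$ and in particular by the uniform lattice $\Theta$, so the normalizer $N = N_{\Isom(Y)^{0}}(S)$ is a closed Lie subgroup of $\Isom(Y)^{0}$ containing $\Theta \cap \Isom(Y)^{0}$. My approach is to pass to Lie algebras: $\lie{s}$ is a semisimple subalgebra of $\lie{isom}(Y)^{0}$ with no compact factors and torsion-free discrete center, and the conjugation action of $\Isom(Y)^{0}$ on the (Lie-algebra) space of such subalgebras is a smooth action for which the $\Theta$-orbit of $\lie{s}$ is a single point. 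Combining this with a Borel-density-type argument applied to the uniform lattice $\Theta \cap S$ in $S$ (furnished by (8) below) and with the Levi decomposition of $\Isom(Y)^{0}$ that arises from applying Theorem~\ref{thm:Theta_dis} to $Y$ itself, I expect to show that $\lie{s}$ is an ideal of $\lie{isom}(Y)^{0}$. Finally, claim (8) is immediate from Lemma~\ref{lem:lattice_in_component} applied to the closed subgroup $\phi(\Isom(X)) \leq \Isom(Y)$, whose identity component is $S$ and in which $\Theta$ is a uniform lattice.
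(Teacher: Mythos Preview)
Your treatment of (1), (3)--(6) and (8) matches the paper's: these are assembled from the earlier results exactly as you indicate. Two smaller deviations are worth noting. For (2), the paper does not go through Proposition~\ref{prop:key-correct} at all; it simply observes from (1) that $\fG=\Isom(X)^0$ has no connected compact semisimple normal subgroup and then invokes Mostow's lemma \cite[Lemma 3.9]{Mostow3} directly to conclude that $\Gamma\cap\fN$ is a uniform lattice in $\fN$. Your route via $\fR\cK$ runs into the difficulty that $\cK$ need not be normal in $\fR\cK$, so ``killing $\cK$'' is not a quotient and the reduction to solvable lattice theory is not immediate. For the absence of finite central subgroups in $S$, the paper argues more directly on $Y$: since $\Theta$ divides $Y$ and normalises $S$, any finite normal subgroup of $S$ is a compact group normalised by $\Theta$ on the contractible manifold $Y$, so Theorem~\ref{thm:compact_trivial} kills it.

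The substantive gap is in your argument for normality of $S$ in $\Isom(Y)^0$. You correctly identify that $S$ is normalised by the uniform lattice $\Theta$, but the passage from ``normalised by a lattice'' to ``normal in the ambient connected group'' is the crux, and the sketch via Borel density on orbits of subalgebras is not a proof. The paper's argument is concrete and uses the theorem itself recursively: apply part (2) to the pair $(Y,\Theta)$ to conclude that $\Theta_1:=\Theta\cap \fN_{Y}$ is a uniform lattice in the (simply connected) nilpotent radical $\fN_{Y}$ of $\Isom(Y)^0$. Now for $\theta\in\Theta_1$ and $s\in S$ one has $[s,\theta]\in S\cap\fN_Y$; the latter is a solvable normal subgroup of the semisimple group $S$, hence discrete and central. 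Since $S$ is connected, the continuous map $s\mapsto[s,\theta]$ into this discrete set is constant, so $S$ centralises $\Theta_1$. A lattice in a simply connected nilpotent group is rigid, so $S$ centralises all of $\fN_Y$. Finally the Malcev--Harish--Chandra conjugacy of Levi subgroups by elements of the nilradical converts ``$S$ centralises $\fN_Y$'' into ``$S$ is normal in $\Isom(Y)^0$''. This commutator-plus-recursion step is the idea you are missing.
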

\begin{proof}  Recall that (1) is a consequence of Corollary \ref{cor:no_normal_compact},  and (3) is proved in Proposition \ref{pro:solabel}. Next (4) and (5) are contained in Theorem  \ref{thm:Theta_dis} and its proof, whereas (6) is Proposition \ref{ClaimA}. 
For (2), observe that by (1), $\Isom(X)^{0}$ has no connected compact normal semisimple subgroup.  Since $\Gamma \cap \Isom(X)^{0}$ is a lattice in $\Isom(X)^{0}$, a result of Mostow \cite[Lemma 3.9]{Mostow3} shows that $\Gamma \cap \fN$ is a uniform lattice in $\fN$.

To prove (7), we first observe that $S= \phi(\Isom(X)^0) $  acts properly on $X/\fR$, by (4). In particular, $S$ must be a closed subgroup of $\Isom(X/\fR)$. By Proposition \ref{prop:kerphi}, the maximal compact normal subgroup
 of $\fS$, where $\fS$ is a Levi-subgroup of $\fG=\Isom(X)^0$,  is contained in $\ker \phi$. 
 Therefore, $S= \phi(\fS)$, is semisimple of non-compact type. By (3), $X/\fR$ is divisible by $\Theta = \phi(\Gamma)$, and $S$  is normalised by this action.  By Theorem \ref{thm:compact_trivial}, $S$ must be without finite normal subgroups. 

By the Malcev Harish-Chandra theorem \cite[Ch.\ III, p.92]{Jacobson}, all Levi subgroups of a Lie group are conjugate by elements of its nilpotent radical. Therefore, to show that $S$ is normal in $\Isom(X/\fR)^{0}$, it is enough to show that $S$ is centralised by the nilpotent radical $\fN_{{X/\fR}}$ of $\Isom(X/\fR)^{0}$. Let $\Theta_{1} =  \fN_{{X/\fR}} \cap \Theta$,  where $\Theta = \phi(\Gamma)$. 
Since $\Theta$ normalises $S$, $[S, \Theta_{1}] \subseteq S \cap \fN_{{X/\fR}}  = \{ 1 \}$. Moreover,
 since $\Theta$ divides $X/\fR$, $\Theta \cap \Isom(X/\fR)^{0}$ is a uniform lattice in $\Isom(X/\fR)^{0}$. By (2), $\Theta_{1}$ is thus a uniform lattice in the simply connected nilpotent group $\fN_{{X/\fR}}$. Since $S$ centralises the uniform lattice $\Theta_{1}$, 
it also centralises $\fN_{{X/\fR}}$. Hence, $S$ is normal in $\Isom(X/\fR)$.

Since the quotient of $X/\fR$ by $\Theta$ is compact, $\Theta$ is a uniform lattice in $\phi(\Isom(X))$, which 
acts properly on $X/\fR$, by (5). By Lemma \ref{lem:lattice_in_component}, $\Theta$ intersects $S$ as a uniform lattice. So (7) is proved. 
\end{proof}

\section{Infrasolv towers}\label{sec:soltower}

\subsection{Review of infrasolv spaces} \hspace{1ex}

\smallskip
\paragraph{\em Infra $R$-geometry} 
Let $R$ be a connected Lie group
and $\Aff(R)$ its group of affine 
transformations. By definition, $\Aff(R)$ is precisely 
the normalizer of the left translation action of
 $R$ on itself in the group of all diffeomorphisms
 of $R$. The group  $\Aut(R)$ of 
continuous automorphisms of $R$ is then naturally a subgroup of 
$\Aff(R)$. Identifying $R$ with its group of \emph{left} 
translations gives rise to a semi-direct product decomposition 
$$ \Aff(R) = R \rtimes \Aut(R) \; . $$ The associated projection homomorphism 
$$ \Aff(R) \to \Aut(R) \; , $$ 
is called \emph{holonomy homomorphism}. 
Let $$ \Delta \leq \Aff(R)$$  be a \emph{discrete} 
subgroup which acts properly discontinuously on $R$.
Then the quotient space $$R/\Delta$$ is called an 
\emph{orbifold with $R$-geometry}.

\smallskip 
\paragraph{\em Infrasolv spaces}
We assume now that $R$ is a simply connected solvable
Lie group. Traditionally, a space with $R$-geometry is called 
infrasolv if it admits
an underlying compatible Riemannian geometry. 
 
\begin{definition} \label{def:infrasolv}
A \emph{compact}  space $R/\Delta$ with $R$-geometry is  
called an \emph{infrasolv orbifold} if the closure of 
the holonomy image of $\Delta$ in $\Aut(R)$ is compact. 
If the infrasolv orbifold $R/\Delta$ is a manifold, 
it is called an \emph{infrasolv} manifold. 
\end{definition}

\noindent Equivalently, we can say that 
\emph{$R/\Delta$ is infrasolv, if and only if the closure of $R \Delta$ in 
$\Aff(R)$ acts properly on $R$.} In particular, if $R/\Delta$ is infrasolv, there exists a  $\Delta$-invariant left-invariant Riemannian metrics on $R$, and any such metric gives rise to an associated \emph{infrasolv Riemannian structure} on $R/\Delta$. 

\begin{lemma} \label{lem:infrasolv} 
Any  Riemannian space 
$X$ with a simply transitive isometric action of  $R$, and a (cocompact) properly discontinuous subgroup $\Delta$ of isometries of $X$, which is normalising this action, constructs an \emph{infrasolv orbifold with $R$-geometry}. This infrasolv structure is unique up to a right-multiplication of $R$. 
\end{lemma}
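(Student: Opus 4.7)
The plan is to transport the setup along the simply transitive $R$-action. Fix a base point $p\in X$ and use the $R$-equivariant diffeomorphism $\iota_p\colon R\to X$, $r\mapsto r\cdot p$, to identify $X$ with $R$. Pulling back the Riemannian metric of $X$ along $\iota_p$ yields a left-invariant metric on $R$ because $R$ acts by isometries. Under $\iota_p$ the original $R$-action becomes left translation, and $\Delta$ becomes a properly discontinuous, cocompact group of isometries of this left-invariant metric.

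Next, I would show that the induced $\Delta$-action on $R$ lies in $\Aff(R)=R\rtimes\Aut(R)$. Since $\Delta$ normalises $R$ in $\Isom(X)$, conjugation yields $\alpha_\delta\in\Aut(R)$ via $\delta r\delta^{-1}=\alpha_\delta(r)$. Letting $r_\delta\in R$ be the unique element with $r_\delta\cdot p=\delta\cdot p$, a direct computation shows
\[
\iota_p^{-1}\circ\delta\circ\iota_p(r)=\alpha_\delta(r)\,r_\delta=r_\delta\cdot\bigl(r_\delta^{-1}\alpha_\delta(r)\,r_\delta\bigr),
\]
which in the decomposition $\Aff(R)=R\rtimes\Aut(R)$ reads $L_{r_\delta}\circ\beta_\delta$ with translational part $r_\delta$ and holonomy $\beta_\delta\in\Aut(R)$ given by $\beta_\delta(r)=r_\delta^{-1}\alpha_\delta(r)\,r_\delta$. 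Thus $\Delta$ embeds into $\Aff(R)$ as a discrete subgroup with cocompact quotient $R/\Delta\cong X/\Delta$.

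The essential step is compactness of the closure of the holonomy image $\{\beta_\delta\}$. Observe that $\beta_\delta$ represents the isometry $L_{r_\delta}^{-1}\circ\delta$ of $X$, which fixes $p$ and hence lies in the isotropy group $\Isom(X)_p$, a compact group by properness of the isometric action. Consequently each $\beta_\delta$ is an automorphism of $R$ which preserves the left-invariant metric. Such metric-preserving automorphisms form a subgroup of $\Aut(R)$ embedding, via the differential $\alpha\mapsto d\alpha_e$, as a closed subgroup of the orthogonal group $O(\lie{r})$; the embedding is injective because $R$ is simply connected. Hence this subgroup is compact and the holonomy closure is compact in $\Aut(R)$. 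Combined with the induced left-invariant Riemannian structure, this presents $R/\Delta$ as an infrasolv orbifold with $R$-geometry in the sense of Definition \ref{def:infrasolv}.

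For uniqueness, any other base point is $p'=r_0\cdot p$ for a unique $r_0\in R$, and then $\iota_{p'}=\iota_p\circ R_{r_0}$, where $R_{r_0}$ denotes right translation by $r_0$. Since right translations commute with, and therefore normalise, the left $R$-action, one has $R_{r_0}\in\Aff(R)$; hence the new embedding of $\Delta$ into $\Aff(R)$ is the conjugate of the old one by $R_{r_0}$, which is precisely what ``right multiplication of $R$'' refers to here. I expect no real obstacle in carrying this out; the only delicate point is bookkeeping in the semidirect product convention, namely that the natural expression $\alpha_\delta(r)r_\delta$ is right-affine and must be rewritten as $L_{r_\delta}\circ\beta_\delta$ in order to read off the correct holonomy $\beta_\delta$ in the convention $\Aff(R)=R\rtimes\Aut(R)$.
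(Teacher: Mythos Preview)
Your proposal is correct and follows essentially the same approach as the paper: fix a base point to identify $X$ with $R$, decompose each $\delta\in\Delta$ into a left translation times an automorphism, observe that the automorphism part is an isometry fixing the base point, and conclude compactness of the holonomy closure via the faithful linear isotropy representation $\Aut(R)\hookrightarrow\GL(T_eR)$ (using that $R$ is simply connected). Your write-up is more explicit about the semidirect-product bookkeeping and, unlike the paper, actually supplies the uniqueness argument.
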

\begin{proof} Let $\Aff(X,R)$ denote the normaliser of $R$ in $\Diff(X)$.  
Fixing a point $p \in X$ defines identifications $$ R = X \; \text{ and }  \;  \Aff(X,R) =  R \rtimes \Aut(R)  .$$
 Here,  $\Aut(R)$ corresponds to the 
stabiliser of $p$. For $\delta \in \Delta$, let 
$$ \delta = r \phi \; , \, r \in R, \,  \phi \in \Aut(R) $$ be its corresponding 
holonomy decomposition. Observe that $\phi$ is an isometry, since $\delta$, $r$ are.
Recall further that the linear isotropy representation of $\Aut(R)$ 
on the tangent space $T_{p} X$ at $p$ is an isomorphism of $\Aut(R)$ onto a closed
subgroup of $\GL(T_{p} X)$. Since $\phi$ is an isometry, its linear isotropy image 
is contained in a compact subgroup of $\GL(T_{p} X)$. This shows that the holonomy
image of $\Delta$ has compact closure in $\Aut(R)$. 
\end{proof}


\subsection{Infrasolv fiber spaces} \label{sect:fiber_spaces}
We introduce now a notion of orbibundles whose fibers 
carry an affine geometry modeled on the solvable Lie 
group $R$. We use a characterisation in terms of group 
actions on the universal cover.
\smallskip

Let $X$ be a simply connected manifold on which $R$
acts properly and freely. Then let $\Diff(X,R)$ denote the
normalizer of $R$ in $\Diff(X)$.  Consider a subgroup
 $$ \Gamma \,  \leq \, \Diff(X,R)  $$  
which acts properly discontinuously on $X$.
Let $\Delta \leq \Gamma$ be the normal subgroup of $\Gamma$, which
acts trivially on the quotient
manifold
$$ Y= X/R \; . $$ Put $$ \Theta = \Gamma/\Delta$$ and assume
further that \emph{$\Theta$ acts properly discontinuously on $Y$.} 
Thus, in this case,  $Y/\Theta$ is a Hausdorff space, in fact, it is an orbifold.  

\smallskip 
We consider the  projection map
$ \mathsf{p}: X/\Gamma \ra Y/\Theta$, 
induced by $ \mathsf{q}: X \to Y$. 
Since $\Theta$ acts properly discontinuously, the fiber stabilisers 
$$ \Delta_{y} = \{ \gamma \in \Gamma \mid \gamma y =y \} , \; y \in Y, $$ 
are finite extension groups of $\Delta$. Since $\Gamma \leq \Diff(X,R)$ normalizes $R$, the restriction of $\Delta_{y}$ to 
$ \mathsf{q}^{-1}(y)$ acts by affine transformations with respect to the simply transitive action of $R$ on $ \mathsf{q}^{-1}(y)$. 
Therefore, the fibers $$ \mathsf{p}^{-1}(\bar y) = R/ \Delta_{y}$$
carry the structure of a space with $R$-geometry. 

\begin{definition}\label{fibersolv} The projection map $$ \mathsf{p}: X/\Gamma \ra Y/\Theta$$  is called a \emph{fiber bundle with $R$-geometry} over the base $Y/\Theta$.  It is  
called an \emph{infrasolv fiber space} if the fibers of 
$\mathsf{p}$ are (compact) infrasolv orbifolds. 
\end{definition}

Given an infrasolv fiber space $\mathsf{p}$, and,  in addition,   
Riemannian metrics on $X$ and $Y$, invariant by $R$, $\Gamma$ and $\Theta$, 
respectively, such that  $$  X \to Y$$  
is a Riemannian submersion, then 
we call 
$\mathsf{p}$ a \emph{Riemannian 
infrasolv bundle} (or fiber space) modelled on the  group $R$. 


\subsection{Structure theorems} 
Applying the results in Sections  \ref{sect:rad1} and 
\ref{sect:rad2},  we can state now: 

\begin{theorem} \label{thm:basic_bundle}
Let $X$ be a contractible Riemannian manifold which is divisible, and let\/ 
$\Gamma \leq \Isom(X)$ be  a 
discrete  subgroup such that $X/\Gamma$ is compact.
Let $\fR$ be the solvable radical of\/ $\Isom(X)$ and 
put $Y = X / \fR$. Let $\Theta$ denote the homomorphic image of\/  
$\Gamma$ in\/ $\Isom(Y)$. Then $X/\Gamma$ has an induced structure of 
Riemannian infrasolv fiber space over the compact aspherical
Riemannian orbifold $Y/\Theta$.
\end{theorem}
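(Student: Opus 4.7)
The plan is to observe that all the structural pieces required by Definition \ref{fibersolv} have already been assembled in Sections \ref{sect:rad1}--\ref{sect:rad2}, and then to verify the infrasolv condition on the individual fibers by a short holonomy computation centered on the isotropy of the $\fR_{0}$-action.

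\emph{Step 1: Identifying the structure group and the bundle.}  First I will take $R = \fR_{0}$, the simply connected solvable characteristic subgroup of $\fR$ produced by Lemma \ref{lem:chara-R0}. Since $\fR_{0}$ is characteristic in $\fR$ and $\fR$ is normal in $\Isom(X)$, the group $\fR_{0}$ is normal in $\Isom(X)$, so $\Gamma$ normalises it and thus lies in $\Diff(X,\fR_{0})$.  Proposition \ref{pro:solabel} exhibits $\mathsf{q}\colon X \to Y = X/\fR_{0}$ as a principal $\fR_{0}$-bundle over the contractible manifold $Y$, and Theorem \ref{thm:Theta_dis} yields that $\Theta$ acts properly discontinuously on $Y$ with compact quotient.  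The quotient metric on $Y$ fixed in Section \ref{sect:rad2} makes $\mathsf{q}$ a Riemannian submersion.  Altogether this realises the data of Section \ref{sect:fiber_spaces}, so the induced map $X/\Gamma \to Y/\Theta$ is a Riemannian fiber bundle with $\fR_{0}$-geometry in the sense of Definition \ref{fibersolv}.  It remains only to check that its fibers are compact infrasolv orbifolds.

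\emph{Step 2: Verifying the infrasolv condition.}  For fixed $y \in Y$, write $\Delta_{y} = \{\gamma \in \Gamma : \gamma \cdot y = y\}$; the fiber over $\bar y$ is then $\fR_{0}/\Delta_{y}$, and is compact as the preimage of the closed point $\{\bar y\}$ in the compact space $X/\Gamma$.  To see the fiber is infrasolv, I fix a base point $p \in \fR_{0}\cdot y$ and identify $\fR_{0}\cdot y$ with $\fR_{0}$ via $\psi(r) = r\cdot p$.  For each $\delta \in \Delta_{y}$ there is a unique $s_{\delta}\in \fR_{0}$ with $\delta \cdot p = s_{\delta}\cdot p$, so $k_{\delta} = s_{\delta}^{-1}\delta$ fixes $p$ and belongs to the stabiliser $H_{p} = (\fR_{0}\cdot \Delta_{y})_{p}$ of $p$ in the subgroup $\fR_{0}\cdot \Delta_{y}\leq \Isom(X)$.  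A direct computation shows that under $\psi$, $\delta$ acts on $\fR_{0}$ as the affine map
$$  r \;\longmapsto \; s_{\delta}\cdot \bigl(k_{\delta}\, r\, k_{\delta}^{-1}\bigr), $$
so the holonomy $\delta \mapsto \phi_{k_{\delta}}\in \Aut(\fR_{0})$ is conjugation by $k_{\delta}$.  Since $\Isom(X)$ acts properly on $X$, the stabiliser $H_{p}$ is contained in the compact group $\Isom(X)_{p}$, so the holonomy image of $\Delta_{y}$ lies in the image of $\overline{H_{p}}$ under the continuous conjugation map $\Isom(X)_{p}\to \Aut(\fR_{0})$, which is compact.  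This verifies Definition \ref{def:infrasolv} for each fiber.

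\emph{Main obstacle.}  The one delicate point is choosing the correct decomposition $\delta = s_{\delta}\,k_{\delta}$: a naive identification of the holonomy with ``conjugation by $\delta$ itself'' would involve the inner automorphisms $\mathrm{Inn}(s_{\delta})$ of $\fR_{0}$, which are in general unbounded in $\Aut(\fR_{0})$ and would appear to obstruct the infrasolv condition.  Absorbing these inner contributions into the translation factor of the affine decomposition via the compact isotropy $H_{p}$ is precisely what confines the holonomy to a compact subgroup of $\Aut(\fR_{0})$, and thereby delivers the theorem.
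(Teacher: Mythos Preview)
Your proof is correct and follows essentially the same route as the paper: both identify the structure group as the characteristic simply connected subgroup $\fR_{0}\leq \fR$, invoke Proposition \ref{pro:solabel} and Theorem \ref{thm:Theta_dis} to set up the Riemannian fiber bundle with $\fR_{0}$-geometry, and then check the infrasolv condition fiberwise using the compactness of the isotropy in $\Isom(X)$. The only cosmetic difference is that the paper packages your Step 2 holonomy computation into Lemma \ref{lem:infrasolv} (observing that the restricted metric on each orbit is $\fR_{0}\Delta_{y}$-invariant and left-invariant, so the holonomy lands in the orthogonal stabiliser at $p$), whereas you carry out the decomposition $\delta = s_{\delta}k_{\delta}$ explicitly; the substance is identical.
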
 
\begin{proof} 
By (1) of Proposition \ref{pro:solabel}, there exists
a simply connected characteristic subgroup $\fR_{0}$ of $\fR$, 
acting properly and freely on $X$ such that $Y = X/ \fR_{0}$ is a contractible manifold. Moreover, the metric on $X$ descends to $Y$ such that the map 
$X \ra Y$ is a Riemannian submersion.  Since $\fR_{0}$ is
characteristic in $\Isom(X)$, we have $\Gamma \leq \Diff(X,\fR_{0})$. 
Consider the associated homomorphism $\phi: \Isom(X) \ra \Isom(Y)$. Then the image $\Theta =  \phi(\Gamma)$ acts properly discontinuously,  by Theorem \ref{thm:Theta_dis}. Hence, the map  $\mathsf{p}: X/\Gamma \lra Y/\Theta$ is a fiber bundle with $\fR_{0}$-geometry over the base orbifold $Y/\Theta$. 

Obviously, since $X/\Gamma$ is compact, 
the fibers of $\mathsf{p}$ are compact.
It remains to show that the bundle is \emph{infrasolv}. That is, we have to
show that the holonomy of every fiber has compact closure. 

For this, note that the metric of $X$ restricts to a $R\Delta_{y}$-invariant 
Riemannian metric on the $R$-orbit $\mathsf{q}^{-1}(y)$. 
As remarked in Lemma \ref{lem:infrasolv}, 
this implies that the fibers are infrasolv. 
\end{proof}

\begin{remark}[Infrasolv structure on the fibers]  The infrasolv structure on the fibers depends on the choice of group $\fR_{0}$, which is not necessarily unique. 
\end{remark} 

\begin{remark}[Addition to \mbox{Theorem \ref{thm:basic_bundle}}] 
The geometry of the fibers of the bundle $\mathsf{p}$ constructed in the proof of Theorem \ref{thm:basic_bundle}, is determined by the holonomy image of $\Delta$ in the fibers of the map
$X \to Y$. Since $\Delta \leq \ker \phi$, Proposition \ref{prop:kerphi} part (3), shows that, \emph{up to finite index},
the holonomy of the fibers is contained in the holonomy image of the subgroup $\fR \cdot \cK \leq \Isom(X)$.  
\end{remark}

The theorem applies in particular to compact aspherical Riemannian \emph{manifolds}
 $$ M = X / \Gamma\, . $$
Since $M$ is a manifold, $\Gamma$ acts freely on $X$, and
$\Gamma \leq \Isom(X)$
is a discrete torsion-free subgroup isomorphic to the fundamental group $\pi_{1}(M)$. Then  
$M$ inherits the structure of an infrasolv fiber space 
$$   M  \lra Y / \Theta $$ 
over the base $Y / \Theta$ which, in general, is an  \emph{orbifold}.  However, the fibers of the bundle map are always infrasolv manifolds, since 
$\Gamma$ acts freely on $X$.  
 

\subsection{Towers of infrasolv fiber spaces} 
Using Theorem \ref{thm:basic_bundle} we construct a sequence of Riemannian submersions 
$$  \mathsf{q}_{i}:  X \lra X_{i} \; , $$ 
subsequently dividing by the solvable radical $\fR_{i}$ of $\Isom(X_i)$.  That is, assuming that $\mathsf{q}_{i}$ is constructed,  we put $$ X_{i+1} = X_{i} /  \, \fR_{i} \;   $$
and define $\mathsf{q}_{i+1}:  X \ra X_{i+1}$ as the 
composition of $\mathsf{q}_{i}$ with 
the projection $X_{i} \ra X_{i+1}$. 
In this way, we obtain a tower of Riemannian submersions 
\begin{equation}  \label{eq:tower1} X \lra  X_1 \lra \,  \ldots \, \lra X_{k} \; ,  \end{equation}
and the induced maps 
$ \Isom(X_{j}) \ra \Isom(X_{j+1})$,  $ j+1 \leq  i$, 
compose to homomorphisms
$$\phi_{i}: \Isom(X)  \ra  \Isom(X_{i}) \; ,   $$
such that $$ \Gamma_{i}  \, =  \, \phi_{i}(\Gamma) $$
acts properly discontinuously and with compact quotient 
$$   X_{i} / \, \Gamma_{i} \; . $$ 
If, for some $\ell$,  
$\Isom(X_{\ell})$ has trivial solvable radical, the process
terminates, and we call $\ell$ the \emph{length} of the tower.   

\smallskip 
In view of  Theorem \ref{thm:structure} and Theorem \ref{thm:basic_bundle},  the following properties are satisfied: 
\begin{corollary}[Infrasolv tower for $X$] \label{cor:solvtower} \hspace{1ex} 
\begin{enumerate}
\item $X_{i}$ is a contractible Riemannian manifold
and the projection $X_{i} \ra X_{i+1}$ is  a principal bundle with structure group 
a simply connected solvable Lie group.
\item The maps $X_{i} /\, \Gamma_{i} \ra X_{i+1}/ \, \Gamma_{i+1}$, $i+1 \leq \ell$, 
are Riemannian  infrasolv fiber spaces.
\item  $\Isom(X_{\ell})^0$ is a semisimple (or  trivial) Lie group of non-compact type, which has no non-trivial finite subgroups in its center. 
\end{enumerate}
\end{corollary}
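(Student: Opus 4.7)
The plan is to argue by induction on the level $i$, repeatedly invoking Theorem \ref{thm:structure} and Theorem \ref{thm:basic_bundle}. The inductive hypothesis to carry along is: $X_{i}$ is a contractible Riemannian manifold, and $\Gamma_{i} \leq \Isom(X_{i})$ is a discrete subgroup acting properly discontinuously with compact quotient $X_{i}/\Gamma_{i}$. The base case $i=0$ is the standing assumption on $(X,\Gamma)$.

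For the inductive step, I would apply Theorem \ref{thm:structure} to $(X_{i}, \Gamma_{i})$. Proposition \ref{pro:solabel}, which enters its proof, provides a simply connected characteristic subgroup $\fR_{i,0} \leq \fR_{i}$ acting properly and freely on $X_{i}$, with $X_{i+1} = X_{i}/\fR_{i} = X_{i}/\fR_{i,0}$. This yields directly the principal bundle structure in part $(1)$, with simply connected solvable structure group $\fR_{i,0}$, and simultaneously gives the contractibility of $X_{i+1}$. Parts $(4)$ and $(5)$ of Theorem \ref{thm:structure} (equivalently, Theorem \ref{thm:Theta_dis}) imply that $\Gamma_{i+1}$ is discrete in $\Isom(X_{i+1})$, and the natural surjection $X_{i}/\Gamma_{i} \to X_{i+1}/\Gamma_{i+1}$ shows that $X_{i+1}/\Gamma_{i+1}$ is compact. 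This closes the induction. Part $(2)$ then follows by applying Theorem \ref{thm:basic_bundle} at each level.

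For termination and assertion $(3)$, I would argue that each non-trivial step strictly reduces the dimension: whenever $\fR_{i} \neq \{1\}$, its nilpotent radical $\fN_{i}$ is non-trivial (the last non-vanishing derived subgroup of $\fR_{i}$ is abelian, hence nilpotent and normal), and $\fN_{i} \leq \fR_{i,0}$ acts freely on $X_{i}$ by Lemma \ref{lem:nilradical}, forcing $\dim X_{i+1} < \dim X_{i}$. Hence the tower must terminate at some $\ell \leq \dim X$. At that stage $\Isom(X_{\ell})^{0}$ is either trivial, or has trivial solvable radical and is therefore semisimple. In the latter case, Corollary \ref{cor:no_normal_compact} applied to the contractible divisible manifold $X_{\ell}$ rules out any non-trivial compact normal subgroup of $\Isom(X_{\ell})$; consequently $\Isom(X_{\ell})^{0}$ has no compact normal factor and is of non-compact type, while the torsion subgroup of its center, being characteristic in $\Isom(X_{\ell})^{0}$, hence normal in $\Isom(X_{\ell})$, and finite, must vanish.

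The main technical point is the propagation of the inductive hypothesis, which rests on the discreteness statement in Theorem \ref{thm:Theta_dis}; beyond that the argument is a formal iteration of results already established, together with the elementary dimension-count ensuring that the tower cannot run forever.
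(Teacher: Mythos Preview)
Your proof is correct and follows the same approach as the paper, which simply records the corollary as a direct consequence of Theorem~\ref{thm:structure} and Theorem~\ref{thm:basic_bundle} applied iteratively. You add two details the paper leaves implicit: the dimension-drop argument guaranteeing termination, and the explicit deduction of~(3) from Corollary~\ref{cor:no_normal_compact} (equivalently, from part~(1) of Theorem~\ref{thm:structure} applied to $X_\ell$, which already asserts that $\Isom(X_\ell)^0$ has no non-trivial compact normal subgroup and hence no finite central subgroup and no compact factor).
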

\noindent 
Any sequence of maps 
\begin{equation}  \label{eq:tower2} 
X/\Gamma \lra  X_1/ \Gamma_1 \lra \,  \ldots \, \lra X_{k}/\Gamma_k  \; ,  \end{equation}
such that at each step $X_i / \Gamma_i \to X_{i+1}/\Gamma_{i+1}$ is a Riemannian infrasolv bundle, will be called an \emph{infrasolv tower} for $X$ over the  \emph{base} 
$$ X_{k}/\Gamma_k  \; . $$ 

\medskip
We call the tower \emph{complete} if the solvable radical of $ \Isom(X_{k})^{0}$ is trivial. 
Thus for a complete tower, $$ \fS =  \Isom(X_{k})^0$$  is semisimple and the center of $\fS$ is a finitely generated abelian group (which is torsion-free, by Corollary \ref{cor:no_normal_compact}.)  
Let $r_k$ denote the rank of $\cZ$, where $\cZ$ is the center of $\fS$. 
Set $X=X_0$. 

\begin{definition} \label{def:solvrank}
The \emph{solvable rank} of the  complete  infrasolv tower 
is the integer 
\[
r=\mathop{\sum}_{i=0}^{k-1}({\rm dim}\, X_{i}-{\rm dim}\,
X_{i+1}) +  r_k ={\rm dim}\, X-{\rm dim}\, X_{k} + r_k   \; . \] 
\end{definition}
The definition is motivated by Example \ref{ex:sl2R}.

\begin{corollary} \label{cor:solvrank}
The group $\Gamma$ contains a 
normal polycyclic subgroup of rank equal to $r$.
\end{corollary}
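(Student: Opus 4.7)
The plan is to build a normal polycyclic subgroup of $\Gamma$ of Hirsch rank $r$ by combining the kernels of the maps in the infrasolv tower with a contribution from the centre $\cZ$ of $\fS = \Isom(X_k)^0$. Writing $\phi_i \colon \Isom(X) \to \Isom(X_i)$ for the composition of the canonical maps in \eqref{eq:tower1} and setting $\Gamma_i = \phi_i(\Gamma)$ and $A_i = \Gamma \cap \ker \phi_i$, every $A_i$ is normal in $\Gamma$, and there is a natural exact sequence
\[
1 \lra A_i \lra A_{i+1} \lra \Gamma_i \cap \ker\bigl(\Isom(X_i) \to \Isom(X_{i+1})\bigr) \lra 1 .
\]
Theorem \ref{thm:Theta_dis} guarantees that $\Gamma_i$ acts properly discontinuously and cocompactly on $X_i$, so Proposition \ref{pro:Gam_kerphi} applied at each stage identifies $A_{i+1}/A_i$ with a uniform lattice in the Lie group $\ker(\Isom(X_i)\to\Isom(X_{i+1}))$, which is an extension of a compact group by the solvable radical $\fR_i$ of $\Isom(X_i)^0$. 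Such a lattice is virtually polycyclic of Hirsch rank equal to the dimension of the simply connected characteristic part of $\fR_i$ (cf.\ Lemma \ref{lem:chara-R0}), which equals $\dim X_i - \dim X_{i+1}$ by Proposition \ref{pro:solabel}(1). A telescoping induction gives that $A_k$ is virtually polycyclic of Hirsch rank $\dim X - \dim X_k$.

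To incorporate the term $r_k$, I would apply Theorem \ref{thm:structure}\,(7),(8) at the top stage of the tower: $\fS$ is semisimple of non-compact type with no finite subgroup in its centre, so $\cZ \cong \ZZ^{r_k}$, and $\Gamma_k \cap \fS$ is a uniform lattice in $\fS$. The classical fact that a uniform lattice in a semisimple Lie group of non-compact type meets its (possibly infinite discrete) centre in a finite-index subgroup then implies that $\Gamma_k \cap \cZ$ is free abelian of rank $r_k$. Since $\cZ$ is characteristic in $\fS$ and $\fS$ is characteristic in $\Isom(X_k)$, the group $\Gamma_k \cap \cZ$ is normal in $\Gamma_k$, so $B = \phi_k^{-1}(\Gamma_k \cap \cZ) \cap \Gamma$ is a normal subgroup of $\Gamma$ fitting into
\[
1 \lra A_k \lra B \lra \Gamma_k \cap \cZ \lra 1 ,
\]
which makes $B$ virtually polycyclic of Hirsch rank $\dim X - \dim X_k + r_k = r$. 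Passing from $B$ to a $\Gamma$-invariant polycyclic subgroup of finite index (for instance, by intersecting the finitely many subgroups of $B$ of the appropriate index, using that $B$ is finitely generated) yields the desired normal polycyclic subgroup of $\Gamma$ of rank $r$.

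The step I expect to be most delicate is the centre contribution: once the classical lattice-versus-centre result is in hand, the rest of the argument is an inductive bookkeeping of Proposition \ref{pro:Gam_kerphi} and Theorem \ref{thm:structure}, both of which are already established in the body of the paper.
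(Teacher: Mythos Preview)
Your argument is correct and follows the same layer-by-layer strategy as the paper: compute the Hirsch rank of each successive kernel in the tower using that a virtually polycyclic group acting properly discontinuously and cocompactly on a contractible manifold has rank equal to the dimension of that manifold (Theorem~\ref{thm:groupring_coho}), and telescope. The paper's proof records only this rank identity $\rank(\ker(\Gamma_i\to\Gamma_{i+1})) = \dim X_i - \dim X_{i+1}$ and stops there.

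Your write-up is in fact more complete than the paper's on two points. First, you handle the contribution $r_k$ from the centre $\cZ$ of $\fS=\Isom(X_k)^0$ explicitly, via the classical fact that a uniform lattice in a semisimple Lie group of non-compact type meets the centre in a subgroup of finite index (this is \cite[5.17~Corollary]{Raghunathan}, which the paper itself invokes in Section~\ref{sect:exsl2R}); the paper's proof does not mention this step. Second, you address the passage from the normal \emph{virtually} polycyclic subgroup $B$ to a normal polycyclic one by taking a characteristic finite-index subgroup, which the paper also leaves implicit. Both additions are routine but necessary for the statement as written, so your version is the more careful of the two.
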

\begin{proof}  Indeed, we have that  $\rank \Gamma_{i}/\Gamma_{i+1} = \dim X_{i} - \dim X_{i+1}$.
(Recall that any virtually polycyclic group which acts properly discontinuously with
compact quotient on a contractible manifold $X$ has virtual cohomological dimension 
$\mathrm{vcd} \,  \Gamma = \dim X$, \cf Theorem \ref{thm:groupring_coho}. Furthermore,
$\rank \Gamma = \mathrm{vcd}\,  \Gamma$, see \cite{Brown}.)
\end{proof}

%
%
%


\section{Aspherical manifolds with large symmetry} \label{sect:examples} 
We give some examples of Riemannian metrics on aspherical manifolds which exhibit 
various types of local symmetry.  

\subsection{Warped product metrics}
A special case of Riemannian submersions $X \ra Y$ 
are warped products $X = Y \times_{f} F$ of Riemannian 
manifolds $Y$ and $F$ , where $f: Y \ra \bbR^{>0}$ denotes the
warping function. 
The manifolds  $(y \times F)$, $y \in Y$, are 
called the fibers of the warped product. The following lemma describes fiber preserving warped product isometries: 

\begin{lemma} \label{lem:wp_isometries}
Assume that the function $f$ is bounded. Then 
every isometry\/ $\Phi$ of $X$ which 
maps fibers
to fibers, is of the form 
$\Phi = \psi \times \phi$, where $\phi \in \Isom(F)$, 
and $\psi \in \Isom(Y)$ satisfies $f \circ \psi =  f$.
\end{lemma}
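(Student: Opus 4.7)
The strategy is to write $\Phi$ in coordinates as $\Phi(y,p) = (\psi(y),\Phi_y(p))$, verify that $\psi$ is an isometry of $Y$, then establish that $\Phi_y$ does not depend on $y$, and finally deduce the functional equation $f\circ\psi = f$ from the boundedness of $f$.

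First I would observe that, since $\Phi$ maps fibers of $\pi_Y: X \to Y$ to fibers, there is a unique map $\psi: Y \to Y$ with $\pi_Y \circ \Phi = \psi \circ \pi_Y$, so that $\Phi(y,p) = (\psi(y),\Phi_y(p))$ for some diffeomorphisms $\Phi_y: F\to F$. Because $\Phi$ preserves the vertical distribution (tangent to the fibers) and is an isometry, it also preserves the horizontal distribution (the orthogonal complement of the vertical). Since the warped metric restricted to horizontal vectors corresponds under $d\pi_Y$ to $g_Y$, this immediately shows that $\psi\in\Isom(Y)$.

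Next, given any smooth curve $\gamma:[0,1]\to Y$ and any $p_0 \in F$, the curve $t\mapsto (\gamma(t),p_0)$ is horizontal in $X$. Applying $\Phi$ and using that $d\Phi$ preserves horizontality, the image curve $t\mapsto (\psi\gamma(t),\Phi_{\gamma(t)}(p_0))$ is likewise horizontal, which forces the $F$-component $\Phi_{\gamma(t)}(p_0)$ to be constant in $t$. Connectedness of $Y$ then gives that $\Phi_y(p_0)$ is independent of $y$; since $p_0$ was arbitrary, there is a single map $\phi:F\to F$ with $\Phi_y = \phi$ for all $y$, hence $\Phi = \psi\times\phi$.

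To pin down the nature of $\phi$, I would restrict $\Phi$ to a fiber $\{y\}\times F$, which is an isometry onto $\{\psi(y)\}\times F$ for the warped fiber metrics $f(y)^2 g_F$ and $f(\psi(y))^2 g_F$. This translates into the identity
\[
\phi^* g_F \; = \; \Bigl(\tfrac{f(y)}{f(\psi(y))}\Bigr)^{\!2} g_F .
\]
The left-hand side is independent of $y$, so $f(y)/f(\psi(y))=c$ is a positive constant.

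The last step is where boundedness enters: iterating the isometry gives $f(\psi^n(y)) = f(y)\,c^{-n}$ for every $n\in\ZZ$. If $c\neq 1$ then either $n\to+\infty$ or $n\to-\infty$ yields $f(\psi^n(y))\to\infty$, contradicting the hypothesis that $f$ is bounded. Hence $c=1$, which gives simultaneously $\phi\in\Isom(F)$ and $f\circ\psi = f$. The main obstacle is really step two, verifying that $\Phi_y$ does not depend on $y$, which requires the observation that $d\Phi$ preserves horizontality; after this the remaining steps are essentially formal, with the boundedness of $f$ playing its role only at the very end to eliminate a nontrivial homothety.
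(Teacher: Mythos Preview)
Your proof is correct and follows essentially the same route as the paper's: both use preservation of the horizontal distribution to factor $\Phi = \psi \times \phi$, derive a homothety relation on $g_F$ with ratio $f(y)/f(\psi(y))$, and then iterate $\psi$ together with boundedness of $f$ to force the ratio to equal $1$. Your treatment of why $\Phi_y$ is independent of $y$ (via horizontal lifts of curves) is more explicit than the paper's, and note that the paper's convention is $g = g_Y + f\,g_F$ rather than $g_Y + f^2 g_F$, so the square in your displayed identity should be dropped---but this is purely notational and does not affect the argument.
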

\begin{proof} 
Indeed, since $\Phi$ is an isometry which preserves the fibers, it induces an isometry $\psi$ of the base $Y$. 
It also respects the horizontal distribution of the warped product, which is tangent to the horizontal leaves $Y \times x$, $x \in F$. Therefore, $\Phi = \psi \times \phi$, for some map $\phi: F \ra F$.
Writing  $g = g_{Y} \times f \, g_{F}$ for the warped product metric, we see that $\Phi$ is an isometry if and only if, for all $y \in Y$, $v \in T_{x} F$,  $$ f( \psi(y))  \,  g_{F,\phi (x)} (d \phi_{x}(v), d \phi_{x}(v)) = f(y) \, g_{F,x}(v,v) \; . $$   
Therefore, keeping $x$ fixed, we deduce that there exists a unique $\lambda = \lambda(\psi) > 0$ such that the relation $$  f( \psi(y)) \, = \,  \lambda \, f(y)$$ holds, for all $y \in Y$.  Consequently,  $\phi$ is a homothety for $g_{F}$ with factor $\lambda^{-1}$. Clearly, the map $k \mapsto \lambda(\psi^{k})$ defines a homomorphism from a cyclic group to $\bbR^{>0}$. Hence, if $f$ is bounded it  follows that $\lambda=1$. 
\end{proof}

Compact aspherical manifolds of large symmetry are easily constructed using warped product metrics. The following simple example exhibits a compact aspherical Riemannian surface $M_{f}$, diffeomorphic to the two-torus, whose metric has large symmetry, in the sense of Definition \ref{def:symmetry}. Its associated solvtower is of length two. (In particular, the metric is not locally homogeneous.)

\begin{example}[Torus of revolution]\label{ex:revolution_torus}
Consider the warped product of circles  
$M_{f}= S^1 {\times}_{\bar f} \,  S^1$ which is covered
by $ X= \RR \times_f^{} \RR$, where 
\begin{equation} \label{eq:circle warp}
 f(x) = 2+ \sin x . \end{equation} 
Accordingly, $X$ has metric $g=  dx^2 + f dy^2$. Then 
$\Isom(X)$ contains the translation group $V= \bbR \times 2\pi\ZZ$, and putting 
$\Lambda  =\ZZ \times 2\pi\ZZ$, we have 
that $$ M_{f}= X/\Lambda. $$ We claim that $V^{0} = \RR$ must be the nilpotent radical of 
$\Isom(X)^{0}$. 
Note that $\Isom(X)^{0}$ has no semisimple part (otherwise $X$ is isometric to the hyperbolic plane, which is absurd since $T^{2}$ has no metric of negative curvature.)  Therefore, $\Isom(X)^{0}$ is solvable. We can infer from Lemma \ref{lem:wp_isometries} that the nilpotent radical is one-dimensional. Indeed, since the nilradical acts freely, it is at most two-dimensional. In particular, it must be abelian, and therefore respects the fibers of the warped product. 
Therefore, the radical quotient for $X$ coincides with the warped product projection   
$$  X \ra  (\RR,dx^2). $$ It gives a Riemannian submersion over the real line,
whose fibers are the orbits of $\Isom(X)^{0} =V^{0}$, which 
are permuted by $\Isom(X)$.
\end{example}

More generally, for \emph{any\/} Riemannian manifold $N$,  we can form the warped product $$ M_{N,f} =  S^1 {\times}_{\bar f} \,   N \; , $$ \emph{where $\bar f$ is as in \eqref{eq:circle warp}}.
Take, for example, a (compact) locally symmetric space of noncompact type $$ N =  \Theta \backslash S/K  $$ 
as fiber. Put
$$     X =  B  {\times}_{ f} \,   S/K $$ 
for  the lifted  warped product on the universal cover, where $B =\RR$ is the real line.
By Theorem \ref{thm:basic_bundle} and Proposition \ref{prop:kerphi},  $S$ acts locally faithfully on the radical quotient of $X$. It follows that the nilpotent radical of $\Isom(X)^{0}$ is at most one-dimensional, and it is therefore centralized by $S$. In particular, the nilradical of $\Isom(X)^{0}$ acts on the fibers of the warped product, which are the orbits of $S$. We deduce from Lemma \ref{lem:wp_isometries} that the nilradical of  $\Isom(X)^{0}$ must be trivial. Hence, $$ \Isom(X)^{0} = S$$ is semisimple, and the base of the lifted  warped product 
$$     X =  B  {\times}_{ f} \,   S/K $$ 
is a  Euclidean space.  We proved: 

\begin{pro}\label{prop:wp2}
The  compact aspherical Riemannian warped product 
$$ M_{\Theta \backslash S/K,\bar f} \to  S^{1}$$ with $f$ as in \eqref{eq:circle warp} satisfies $\Isom(X)^{0} = S$ is semisimple. Moreover,  the isometry group of the base of the warped product metric on $X$ is $$ \Isom(B) = \Isom(\bbR) \; . $$
\end{pro}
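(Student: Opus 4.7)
The plan is to follow the reasoning sketched in the paragraphs preceding the proposition. First, I would observe that $S$ acts on $X=B\times_{f}S/K$ via its left translation action on the fiber $S/K$ (trivially on $B$), and this is by isometries of the warped metric $dx^{2}+f\,g_{S/K}$ because $S$ preserves $g_{S/K}$; hence $S\hookrightarrow\Isom(X)^{0}$. Let $\fR$ and $\fN$ denote the solvable and nilpotent radicals of $\Isom(X)^{0}$. By Theorem \ref{thm:basic_bundle} together with Proposition \ref{prop:kerphi}, the image of $S$ in $\Isom(X/\fR)$ is identified with $S$ modulo a discrete central kernel; in particular $\dim(X/\fR)\geq\dim(S/K)$, so $\dim\fR\leq\dim X-\dim(S/K)=1$ and consequently $\dim\fN\leq 1$.

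Suppose $\dim\fN=1$. Then $\fN\cong\bbR$ and, since a semisimple group of non-compact type has no non-trivial character into $\Aut(\bbR)=\bbR^{\times}$, $S$ centralizes $\fN$. Hence $\fN$ preserves each $S$-orbit, i.e.\ each fiber of the warped product. As $f(x)=2+\sin x$ is bounded, Lemma \ref{lem:wp_isometries} applies and every $\nu\in\fN$ decomposes as $\psi\times\phi$ with $\psi\in\Isom(B)$ satisfying $f\circ\psi=f$ and $\phi\in\Isom(S/K)$. Because $\fN$ centralizes the transitive $S$-action on each fiber, $\phi$ lies in the centralizer of $S$ in $\Isom(S/K)$, which is discrete; connectedness of $\fN$ then forces $\phi=1$. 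So $\fN$ embeds into the stabilizer of $f$ in $\Isom(\bbR)$, which is a discrete group (only $2\pi\bbZ$ and the reflections centered at the points $\pi/2+k\pi$ preserve $f$); this contradicts $\dim\fN=1$. Hence $\fN=\{1\}$, and since a connected solvable Lie group with trivial nilradical is trivial, $\fR=\{1\}$ and $\Isom(X)^{0}$ is semisimple.

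To upgrade to the equality $\Isom(X)^{0}=S$, I would consider the projection $\pi:X\to B$ and the induced homomorphism $\Isom(X)^{0}\to\Isom(\bbR)$. Its image is a connected subgroup of the one-dimensional Lie group $\Isom(\bbR)$, so it is either trivial or the translation group $\bbR$. A non-trivial image would produce a one-parameter family $g_{t}\in\Isom(X)^{0}$ sending the fiber over $0$ to the fiber over $t$ via a homothety of $(S/K,g_{S/K})$ with factor $\sqrt{f(0)/f(t)}\neq 1$; but a symmetric space of non-compact type admits no non-trivial homotheties, since its sectional curvatures would rescale non-trivially. So $\Isom(X)^{0}$ preserves fibers, Lemma \ref{lem:wp_isometries} identifies it with a connected subgroup of $\Isom(S/K)^{0}$, and together with $S\hookrightarrow\Isom(X)^{0}$ and $\dim\Isom(S/K)^{0}=\dim S$ the equality $\Isom(X)^{0}=S$ follows. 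The ``moreover'' clause is then immediate: $X/\Isom(X)^{0}=X/S=B$ inherits the metric $dx^{2}$, whose isometry group is $\Isom(\bbR)$.

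The hardest step will be the upgrade from ``semisimple'' to the equality $\Isom(X)^{0}=S$, because it requires the homothety rigidity of symmetric spaces of non-compact type and some care with discrete kernels when comparing $S$ to $\Isom(S/K)^{0}$. By contrast, the bound $\dim\fN\leq 1$ and the contradiction in the case $\dim\fN=1$ are direct applications of the structural results of Sections \ref{sect:rad1}-\ref{sect:rad2} together with the warped-product isometry lemma, exploiting the crucial features that the chosen warping function $f$ is bounded and has only discrete translation symmetry.
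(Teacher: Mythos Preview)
Your steps (1)--(3), which establish that the nilradical and hence the solvable radical of $\Isom(X)^0$ is trivial, follow the paper's argument essentially verbatim (the paper's proof is the paragraph immediately preceding the proposition); you have merely spelled out the application of Lemma~\ref{lem:wp_isometries} in more detail, and correctly so.

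The gap is in step (4). You write ``consider the projection $\pi:X\to B$ and the induced homomorphism $\Isom(X)^{0}\to\Isom(\bbR)$'', but such a homomorphism exists only once you know that $\Isom(X)^{0}$ permutes the warped-product fibers, and this is precisely what remains to be shown. Your subsequent homothety argument, while correct in itself, presupposes this. (In fact, once fiber-permutation is known and $\Isom(X)^0$ is semisimple, the induced map to the abelian group $\Isom(\bbR)^0=\bbR$ is automatically trivial, so the homothety discussion is not needed.) The paper glosses over the passage from ``semisimple'' to ``$=S$'' as well, so you are attempting to justify a step the paper leaves implicit.

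A clean way to close the gap: the dividing group $\Gamma=2\pi\ZZ\times\Theta$ normalizes $S$ (the factor $2\pi\ZZ$ centralizes $S$, and $\Theta\subset S$), hence so does the uniform lattice $\Gamma\cap\Isom(X)^0$ in the semisimple group $\Isom(X)^0$ of non-compact type. Borel density then forces $S$ to be normal in $\Isom(X)^0$, so $\Isom(X)^0=S\cdot S'$ with $S'$ a connected semisimple complement \emph{centralizing} $S$. Now $S'$ maps fibers to fibers, and your own argument from step~(2) (via Lemma~\ref{lem:wp_isometries}, the discreteness of the $f$-stabilizer in $\Isom(\bbR)$, and the discreteness of the centralizer of $S$ in $\Isom(S/K)$) shows $S'=\{1\}$. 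Alternatively, one can check that the scalar curvature of $X$ is a non-constant function of $x$ alone, which forces every isometry to preserve the fiber foliation directly.
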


\subsection{Metrics on\/ $\tSL$ and its compact quotients} \label{sect:exsl2R}

Let $$ \tSL$$  denote the universal covering group of $\SL(2,\RR)$. 

\begin{example} \label{ex:sl2R} 
Consider  the group manifold $X = \tSL$ with any left-invariant Riemannian metric. Then by the left action on itself 
 $$\tSL \leq  \Isom(X)$$ is represented as a subgroup of the isometry group.  
 Let $\fR$ denote the solvable radical of $\Isom(X)^0$ and $\cZ$ the center of  $\tSL$. 
 Two principal cases do  occur:
\begin{enumerate}
\item[(I)] 
The radical  $\fR$ is a one-dimensional vector group and acts freely on $X$. The  Riemannian quotient 
 $ X/ \fR$ is (up to scaling) isometric to the hyperbolic plane  $ \mathbb{H}^2$, so that the associated fibering 
  $$  X_{\tSL} \, \to \;    \mathbb{H}^2 $$ is a Riemannian submersion,  and 
$$ \Isom(X)^0 =  \fR \;   \tSL \; ,  \; \fR \cap  \tSL   =  \cZ .$$ 
 
\item[(II)] 
$\Isom(X)^0 = \tSL$. 
\end{enumerate}

\begin{proof}  
Observe that $\Isom(X)^0$ is reductive with Levi subgroup  $\tSL$.
Assume that  the radical $\fR$ is non-trivial. Since $\fR$ is abelian and centralising $\tSL$, 
it arises as a one-parameter group of right-translations on $\tSL$. Since $\Isom(X)^0$ acts properly, the adjoint representation of this one-parameter group defines a compact subgroup of inner automorphisms of the Lie algebra of $\SLt$. It also preserves the scalar product on the
tangent space of the identity, which defines the left-invariant Riemannian metric on $X$. 
In particular, $\fR$ arises  from the subgroup $\tSO$ in  $\tSL$, which is covering (a conjugate of)  $\SO(2)$. 
Moreover, $X/ \fR = \SLt/ \SO(2) = \HH^{2}$. This is type (I). 
%
\end{proof}
\end{example}
\begin{remark} \label{rem:typeIIsl2}
The second case actually occurs and is the generic one.  For example,  take a standard basis $X,Y, H$ of the Lie algebra of\/  $\SLt$ with $[H, X ] =  2 X$, $[H, Y ] =  -2 Y$ and  $[X,Y ] = H$.
Define it as orthonormal basis for the scalar product at the tangent space at the identity. 
We verify 
that this scalar product is not preserved by any compact subgroup in the adjoint image of\/ $\SLt$. Hence, this defines a left-invariant metric on $\tSL$, which is of type (II). 

\begin{proof}[Proof of remark] We show that the scalar product does not admit a one parameter group of inner isometries. 
Let $B = a H + bX + c Y  \in \lie{sl}(2,\bbR) $, where $a,b,c$ are real numbers. Then $\mathrm{ad}(B): \lie{sl}(2,\bbR) \to \lie{sl}(2,\bbR)$ is represented, with respect to the basis $\{ H,X,Y\}$, by a matrix of the form $$
\left( \begin{matrix}
0 & -c & b \\ -2b & 2a & 0 \\ 2 c & 0 & -2a	
\end{matrix} \right) \; . 
$$  Now this matrix is skew if and only if $B=0$.
\end{proof}
\end{remark} 

\paragraph{\em Associated principal circle bundle over K\"ahler manifolds} 
Now let $\Gamma \leq \tSL$ be a (uniform) lattice.  Then $\Delta= \Gamma \cap \cZ$ has finite index in $\cZ$ (by \cite[5.17 Corollary]{Raghunathan}) and $\Gamma$ projects to  a uniform lattice $$ \Theta = \Gamma/\Delta \leq \PSL_2\bbR  = \Isom(\mathbb{H}^2)^0 .$$ 
Let  $\Gamma$ act by left-multiplication on $\tSL$ and put 
$$ M = X / \, \Gamma = \Gamma \, \backslash \tSL \; . $$ 
Therefore: 
\begin{enumerate}
\item[(I)] (the Sasakian case):  Since $\Isom(X)^0$ has a radical $\fR$ isomorphic to the real line, there is an induced 
infrasolv-tower of length $k= 1$ over a compact hyperbolic orbifold:
$$   M  \lra  \,  (X/ \, \fR = \mathbb{H}^2) / \, \Theta . $$
The map is actually a principal circle bundle, and a Riemannian submersion. These manifolds $M$ and their geometry play a prominent role in the classification of three manifolds. Compare \cite{Geiges}.
\item[(II)]
$\Isom(X)^0 = \fS= \tSL$ is
semisimple with infinite cyclic center $\cZ$. The Riemannian manifold $X$ 
therefore does not admit an infrasolv-fibering. 
\end{enumerate}

In both cases,  $M$ is locally homogeneous (in particular $M$ has large symmetry) and the solvable rank  
of the metrics  (see Definition \ref{def:solvrank}) satisfies $r =1$.

\section{Constructing Riemannian manifolds from group extensions}\label{sect:group_extensions}
In this section we  introduce a method which allows to construct examples of aspherical 
Riemannian orbibundles 
$$ \mathsf{p}:   X /\Gamma \to Y/ \Theta  \; , $$ 
which arise from associated  group extensions of the form
$$ 1 \to \Lambda \to  \Gamma \lra \Theta \ra 1 \, . $$ 
In our setup,  $\Lambda$, in general,  will be a virtually polycyclic group. We  will 
work out the details only in a specific, particularly simple case. Our main purpose here 
is to construct an aspherical manifold, which supports Riemannian metrics  
of large symmetry, but at the same time does not admit any 
locally homogeneous Riemannian metric, see Corollary \ref{cor:non_locally_homogeneous}. 
The method though generalises 
considerably. The basic construction is partially based on the notion 
of \emph{injective Seifert fiber spaces} (\cf \cite{LeeRaymond} for an extensive account). 

\subsection{Fiberings over hyperbolic Riemannian orbifolds}
\hspace{0.1cm} 

\smallskip 
\paragraph{\em Setup}
As a  base space of our tentative fibration, we choose a hyperbolic orbifold 
(that is, a space of constant negative curvature) $$ \HH^n / \, \Theta \;  . $$
Here, $\Theta \leq {\rm PSO}(n,1)$ is a discrete uniform subgroup, and  
$$ {\rm PSO}(n,1) = {\rm
Isom}(\HH^n)^0$$  denotes the identity component
of the group of isometries of real
hyperbolic space $\HH^n$.  
In addition, we are considering a central group extension:
\begin{equation}\label{centralex}
1\ra \ZZ^k\ra \Gamma \lra \Theta \ra 1 \; \, .
\end{equation} 
Up to isomorphism \eqref{centralex} is determined by its extension 
class $$[f]\in H^2(\Theta, \ZZ^k)  , $$ where  $f$ is a two-cocycle
with values in $\bbZ^{k}$. 
Recall that (by the Borel-density theorem) the lattice $\Theta$ does 
not contain any solvable or finite normal subgroup. 
Therefore, we remark: 
\begin{itemize} 
\item[(*)] the image of  $\ZZ^{k}$ in $\Gamma$ is the maximal virtually solvable 
normal subgroup of $\Gamma$. 
\end{itemize} \vspace{1ex}

\paragraph{\em First construction step}
Consider the standard inclusion $\ZZ^{k} \subset \bbR^{k}$, where $\ZZ^{k}$ is a lattice in
$\bbR^{k}$. Using the cocycle $f$ for \eqref{centralex},  we obtain
 a pushout diagram of group extensions, as follows: 
\begin{equation}\label{pushout} 
\begin{CD}
 1@>>> \ZZ^k@>>>\Gamma@>>> \Theta@>>>1\\
@.   \cap\,@. \cap@. ||@. \\
  1@>>> \RR^k @>>>\RR^k\cdot \Gamma @>>> \Theta@>>>1.\\
\end{CD}
\end{equation}

\smallskip 
\paragraph{\em Second construction step} 
The Seifert construction
shows that there exists a proper action of the pushout $\RR^k\cdot \Gamma$
on the product manifold 
\begin{equation} \label{product} X=\RR^k\times\HH^n \; .  \end{equation}  
This action extends the translation action of $\RR^k$ on the left factor of $X$,
and induces the original action of $\Theta$ on $\HH^n$. 
Thus the  quotient
$X/\Gamma$ is a compact aspherical orbifold and  
\begin{equation}  \label{seifertfib}
T^k \to  X/\Gamma \to  \HH^n/\Theta
\end{equation}
 a \emph{Seifert fibering} 
with typical fiber a $k$-torus $T^k$. See  \cite[Theorem 7.2.4, Sections 7.3, 7.4]{LeeRaymond}. 

\smallskip 
\paragraph{\em Third construction step}
The total space $X/\Gamma$ of the fibration \eqref{seifertfib} carries a compatible Riemannian metric of large local symmetry: 

\begin{proposition}[Metric of large symmetry on $X/\Gamma$]  \label{prop:metric_exists}
 There exists a Riemannian metric $g$ on $X$ such that  
 \begin{enumerate}
 \item $\RR^k\cdot \Gamma$ acts properly by isometries, and,
 \item   the projection map $X \to  \HH^n$ is a Riemannian submersion.
 \end{enumerate}
 \end{proposition}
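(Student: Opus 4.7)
The plan is to view $\pi\colon X = \RR^k \times \HH^n \to \HH^n$ as a principal $\RR^k$-bundle on which $\RR^k\cdot\Gamma$ acts lifting the $\Theta$-action on the base, and then to assemble $g$ from three pieces: the hyperbolic metric pulled back to a horizontal distribution, the standard Euclidean metric on the vertical bundle, and a $G$-invariant horizontal-vertical splitting, where $G = \RR^k\cdot\Gamma$.

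First I would make the Seifert action explicit. Since $\RR^k$ is the kernel of $\RR^k\cdot\Gamma \to \Theta$ and acts by translations on the first factor, each $\gamma \in \RR^k\cdot\Gamma$ takes the form
$$\gamma \cdot (v, x) \, = \, \bigl(\, v + \lambda(\gamma, x),\ \theta(\gamma) \cdot x \,\bigr)$$
for some cocycle $\lambda$ with values in $\RR^k$. Centrality of $\RR^k$ in $\RR^k\cdot\Gamma$ forces the differential of $\gamma$ on vertical vectors to be the identity under the canonical identification of vertical tangent spaces with $\RR^k$; hence the flat Euclidean metric on each fiber is automatically $G$-invariant. The hyperbolic metric $g_{\HH^n}$ is $\Theta$-invariant, so it descends on \emph{any} $G$-invariant horizontal distribution I can produce.

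The heart of the proof, and the main obstacle, is constructing a $G$-invariant principal connection on $\pi$. On the trivial bundle, any such connection has the form $A = dv - \pi^* B$ with $B \in \Omega^1(\HH^n; \RR^k)$, and a short calculation reduces $G$-invariance to
$$\theta(\gamma)^* B \, - \, B \, = \, d_x\lambda(\gamma, \cdot) \quad \text{for every } \gamma \in \RR^k\cdot\Gamma.$$
The right hand side, read as a function of $\gamma$, is a $1$-cocycle of $\Theta$ with values in the $\Theta$-module $\Omega^1(\HH^n; \RR^k)$ (the cocycle identity for $\lambda$ passes to its differential), and I need to exhibit it as a coboundary. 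Cocompactness of $\Theta$ on $\HH^n$ is the key input: pick a smooth nonnegative $\bar\phi$ of compact support on $\HH^n$ with $\sum_{\theta \in \Theta}\theta^*\bar\phi \equiv 1$, obtained by lifting a bump function from $\HH^n/\Theta$. A standard partition-of-unity averaging against $\bar\phi$ then produces an explicit $B$ solving the above equation.

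With $A$ in hand, let $\mathcal{H} = \ker A$ and declare $\mathcal{H}$ orthogonal to the vertical bundle, with $g|_{\mathcal{H}} = \pi^* g_{\HH^n}$ and $g$ the Euclidean metric along the vertical bundle. Then $\pi\colon X \to \HH^n$ is tautologically a Riemannian submersion, and $G$-invariance of each of the three pieces (vertical metric, horizontal distribution, pulled-back hyperbolic metric) makes $g$ itself $G$-invariant. Properness of the resulting isometric $G$-action is immediate from properness of the Seifert action, which was part of the given construction.
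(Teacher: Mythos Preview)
Your argument is correct, and the overall architecture---an invariant horizontal distribution together with the pulled-back hyperbolic metric horizontally and a fixed $G$-invariant metric vertically---is the same as the paper's. The difference lies in how the invariant horizontal distribution is obtained.

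The paper takes a shorter, soft route: since $\RR^k\cdot\Gamma$ acts properly on $X$, it invokes the general existence of an invariant Riemannian metric $g'$ for proper actions (citing Koszul), and then simply declares $\mathcal H$ to be the $g'$-orthogonal complement of the vertical distribution. The new metric $g$ is defined by keeping $g'$ on the vertical part and replacing the horizontal part by $\pi^*g_{\HH}$; invariance of $\mathcal H$ and equivariance of $\pi$ make $g$ invariant. No explicit cocycle computation is needed.

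Your route is more constructive: you write the action in cocycle form, reduce the existence of an invariant connection $A=dv-\pi^*B$ to the coboundary equation $\theta(\gamma)^*B-B=d_x\lambda(\gamma,\cdot)$, and solve it by partition-of-unity averaging over the cocompact $\Theta$-action. This is exactly the Seifert-fiber-space mechanism, and it works because sections of any $\Theta$-equivariant vector bundle over $\HH^n$ form a cohomologically trivial $\Theta$-module under a proper cocompact action. Both approaches ultimately rest on the same averaging principle; the paper hides it inside the Koszul citation, while you unpack it explicitly. Your version gains explicit formulas and makes the flat vertical metric visible; the paper's version is quicker and does not require knowing the affine form of the action on fibers.
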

\begin{proof} Since $\RR^{k}$, $\Theta$ and also  (according to 
 \cite[Theorem 7.2.4]{LeeRaymond}) $\Gamma$ act properly, we infer in the 
 light of the fact that  $\Gamma \cap \RR^{k}$ is lattice in $\RR^{k}$
 that the pushout $\RR^k\cdot \Gamma$ acts properly on $X$. Thus there
exists an $\RR^k\cdot \Gamma$-invariant Riemannian metric $g'$ on
$X$  (see \cite{Koszul}, for example). Let $\mathcal H $ be the 
horizontal distribution orthogonal to the orbits of $\RR^{k}$ with respect to this metric. 
If $\displaystyle \mathsf p:X\ra\HH^n$ is the projection onto the second factor of $X$, 
the induced bundle map ${\mathsf p}_*: \mathcal{H} \ra {T}\, \HH^n$ 
identifies horizontal spaces in $X$ with the respective tangent spaces of $ \HH^n$.

Put $u=T+A, v=S+B$ for $u,v\in T_{x} X$, $S,T$ tangent to the fibers, and 
$A, B \in \mathcal H$. 
Let $g_{\HH}$ denote the hyperbolic metric and define a Riemannian metric 
$g$ on $X$ by putting:  
\begin{equation}\label{isog} 
g(u,v)=g'(T,S)+g_{\HH}({\mathsf p}_*A,{\mathsf p}_*B) \; .
\end{equation} 
Therefore, the metric $g$ has the same horizontal spaces $\mathcal H$ as $g'$, and also  $\mathsf p: X \to \HH^n$ is a Riemannian submersion, by construction. 
Also $\RR^{k} \cdot \Gamma$  acts isometrically on the fibers, and the horizontal distribution $\mathcal{H}$  for $g$ is invariant by $\RR^{k} \cdot \Gamma$. Since  $\RR^{k}$ acts trivially on $\mathcal H$, the metric $g$ is clearly invariant by $\RR^{k}$. Moreover, since $\mathsf p$  is $\Gamma$-equivariant, and $\Theta$ acts by isometries on the base, also $\Gamma$ acts by isometries with respect to $g$. 
\end{proof}

\begin{corollary} \label{cor:large_symmetry}
The compact orbifold $X/\Gamma$ admits a metric, which has a
complete infrasolv tower of length one and of solvable rank $k$ (see Definition $\ref{def:solvrank}$). 
In particular, 
$X/\Gamma$ carries a metric of large symmetry.
\end{corollary}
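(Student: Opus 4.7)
The plan is to verify that the metric $g$ from Proposition \ref{prop:metric_exists} has solvable radical $\fR = \RR^k$, where $\RR^k$ is identified with the translation group of the first factor of $X = \RR^k \times \HH^n$, and that the associated radical quotient is precisely $\HH^n$ with its hyperbolic metric. Once this is established, the tower \eqref{eq:tower1} consists of the single submersion $X \to \HH^n$, and Definition \ref{def:solvrank} yields solvable rank $r = (n+k) - n + 0 = k$ (the $+0$ coming from the triviality of the center of $\PSO(n,1)$). Large symmetry follows immediately from Definition \ref{def:symmetry}, since $\HH^n$ is a Riemannian homogeneous space of the semisimple group $\PSO(n,1)$.

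To show that $\RR^k$ is normal in $\Isom(X)^0$, I would argue as follows. By Proposition \ref{prop:metric_exists}, $\RR^k$ is a connected subgroup of $\Isom(X)^0$, and the pushout structure \eqref{pushout} makes $\RR^k$ normal in $\RR^k \cdot \Gamma$. In particular, $\Gamma_0 = \Gamma \cap \Isom(X)^0$, which is a uniform lattice by Lemma \ref{lem:isometry_lattices}, normalizes $\RR^k$. Hence the normalizer $N_{\Isom(X)^0}(\RR^k)$ is a closed subgroup containing both the uniform lattice $\Gamma_0$ and $\RR^k$ itself; being closed and cocompact in the connected Lie group $\Isom(X)^0$, it must exhaust $\Isom(X)^0$.

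For the dimension bound $\dim \fR \leq k$, I would apply the Bieberbach-Auslander-Wang Proposition \ref{prop:key-correct} to the uniform lattice $\Gamma_0 \leq \Isom(X)^0$: the intersection $\Gamma_0 \cap \fR \fK$ is then a uniform lattice in $\fR \fK$, where $\fK$ denotes the maximal compact connected normal subgroup of a Levi factor. Since $\fR \fK$ is characteristic in $\Isom(X)^0$, the group $\Gamma_0 \cap \fR \fK$ is a \emph{normal} virtually polycyclic subgroup of $\Gamma_0$ of Hirsch rank $\dim \fR$ (the factor $\fK$ being compact contributes nothing). Invoking observation (*) after \eqref{centralex}, which identifies $\ZZ^k$ as the maximal virtually solvable normal subgroup of $\Gamma$, one concludes that $\Gamma_0 \cap \fR \fK$ is commensurable with a subgroup of $\ZZ^k$, hence of rank at most $k$. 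Combined with the inclusion $\RR^k \subseteq \fR$, this forces $\fR = \RR^k$. The radical quotient $X/\fR = \HH^n$ then carries the hyperbolic metric by the construction of $g$ in \eqref{isog}, and $\Isom(\HH^n)^0 = \PSO(n,1)$ is semisimple of non-compact type with trivial radical and trivial center, confirming completeness of the tower of length one.

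The main obstacle I foresee is the dimension estimate for $\fR$: the Bieberbach-Auslander-Wang theorem controls the intersection with $\fR \fK$ rather than with $\fR$ itself, so the argument rests on the characteristic nature of $\fR \fK$ inside $\Isom(X)^0$ combined with observation (*), which itself relies ultimately on Borel density applied to $\Theta \leq \PSO(n,1)$. A secondary subtlety is ensuring that the quotient Riemannian metric on $X/\fR$ is literally hyperbolic, but this follows directly from the definition \eqref{isog} of $g$ as a Riemannian submersion onto $(\HH^n, g_{\HH})$.
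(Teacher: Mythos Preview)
Your normalizer argument in the first step contains a genuine gap. The assertion that a closed cocompact subgroup of a connected Lie group must exhaust the group is simply false: any uniform lattice is already a counterexample, and throwing in the connected piece $\RR^k$ does not repair this. For a concrete illustration that the normalizer of a connected abelian subgroup can remain proper while containing a uniform lattice, take $G = \SO(3) \times \PSL(2,\RR)$ with lattice $\Gamma_0 = \{1\} \times \Theta$ and let $V$ be a circle subgroup of the $\SO(3)$ factor; then $\Gamma_0$ centralizes $V$, yet $V$ is not normal in $G$. Corollary~\ref{cor:no_normal_compact} does exclude such compact factors from $\Isom(X)^0$ in the present setting, but your argument never invokes it, and in any case a bare cocompactness statement cannot replace the density or structural input that is actually needed.

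The paper gives no separate proof of the corollary; the identification of $\RR^k$ with $\fR_0$ is carried out in the immediately following Proposition~\ref{projection_rigid}, and the route taken there avoids the normality question entirely. One passes to the radical projection $\phi: \Isom(X) \to \Isom(X/\fR)$ and notes that $\phi(\RR^k)$ is a connected abelian subgroup of the semisimple image $S_0 = \phi(\Isom(X)^0)$, normalized by the uniform lattice $\phi(\Gamma_0)$. Borel density applied to the adjoint form of $S_0$ then forces $\phi(\RR^k) = \{1\}$, so that $\RR^k \subseteq \ker \phi$ and hence $\RR^k$ acts freely on each fiber of $\mathsf{q}$. Your dimension bound via observation~(*) is essentially the argument the paper then uses to show the fibers have dimension at most $k$, whence $\RR^k$ acts simply transitively on them and coincides with $\fR_0$. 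Thus your second step and the identification of the base as $\HH^n$ are sound; only the mechanism for placing $\RR^k$ inside the radical needs to be replaced by the Borel-density argument.
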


Suppose $g$ is any Riemannian metric on $X$, which satisfies (1) and (2) of Proposition \ref{prop:metric_exists}. Then the situation is sufficiently rigid, so that the radical projection 
$$ \mathsf{q}: X \to X/\fR \; ,  $$
as defined in Section \ref{sect:rad2},  actually coincides with the projection map onto the second factor of \eqref{product}. More precisely:  

\begin{proposition}[Rigidity of the projection] \label{projection_rigid}
For any metric on $X$, satisfying $(1)$ and $(2)$ of Proposition $\ref{prop:metric_exists}$:
 \begin{enumerate}
 \item  The image of\/ $\RR^k$ in $\Isom(X)$ is the maximal simply connected normal solvable subgroup $\fR_{0}$ of\/  $\Isom(X)$. 
 \item The fibers of $\mathsf{q}$ are Euclidean spaces on which $\RR^{k}$  acts simply transitively by translations. 
  \item  The radical quotient $X/\fR$ is isometric to $\HH^n$, and the projection map onto the second factor of \eqref{product} corresponds to the radical projection 
 $\mathsf{q}$.
 \end{enumerate}
\end{proposition}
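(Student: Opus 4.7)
My plan is to apply Theorem \ref{thm:structure} to $(X, g, \Gamma)$, pit the centrality of $\ZZ^{k}$ in $\Gamma$ against the semisimple image $S = \phi(\Isom(X)^{0})$ in $\Isom(Y)$ via Borel density, and thus force the solvable radical $\fR$ of $\Isom(X)^{0}$ to coincide with $\RR^{k}$. Write $\phi : \Isom(X) \to \Isom(Y)$ for the induced map to $Y = X/\fR$ and $\Delta = \Gamma \cap \ker \phi$.

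First I would show $\dim \fR = k$. By Proposition \ref{pro:Gam_kerphi}, $\Delta$ is a virtually polycyclic uniform lattice in $\ker \phi$, normal in $\Gamma$, so the remark $(*)$ on the central extension \eqref{centralex} forces $\Delta \subseteq \ZZ^{k}$; since $(\ker \phi)^{0} = \fR \cdot \cK^{0}$ (Proposition \ref{prop:kerphi}(2)) is a compact extension of a solvable group of dimension $\dim \fR$ (note $\fR \cap \cK^{0} \subseteq \fR \cap \fS$ is discrete), the Hirsch length of $\Delta$ equals $\dim \fR$, hence $\dim \fR \leq k$. For the reverse inequality I would show $\RR^{k} \subseteq \ker \phi$. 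Since $\RR^{k}$ is central in $\RR^{k} \cdot \Gamma$, its image $\phi(\RR^{k}) \subseteq S$ is a connected abelian subgroup centralized by the uniform lattice $\Theta \cap S$ from Theorem \ref{thm:structure}(8); Theorem \ref{thm:structure}(7) makes $S$ semisimple of non-compact type with discrete center, so Borel density gives $\phi(\RR^{k}) \subseteq Z(S)$, and connectedness collapses this to $\{1\}$. Since $\bbR^{k}$ has no nontrivial compact subgroup, the ensuing injection $\RR^{k} \hookrightarrow \fR/(\fR \cap \cK^{0})$ yields $k \leq \dim \fR$, hence equality.

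That injection now has the same dimension as its connected target, so it is surjective, forcing $\fR/(\fR \cap \cK^{0}) \cong \bbR^{k}$ to be abelian. Then $[\fR,\fR]$ is a connected subgroup of the discrete $\fR \cap \cK^{0}$, hence trivial, so $\fR$ is abelian; thus $\fR$ equals its nilpotent radical $\fN$, which is simply connected by Theorem \ref{thm:structure}(2), and the decomposition $\fR = \fR_{0} \rtimes \fT$ forces $\fT = \{1\}$ and $\fR = \fR_{0} \cong \bbR^{k}$. To match $\RR^{k}$ with $\fR$ as subsets of $\Isom(X)^{0}$, decompose $v \in \RR^{k} \subseteq \fR \rtimes \cK^{0}$ as $v = (\rho(v), \kappa(v))$; since $v$ centralizes $\Gamma$, in particular the uniform lattice $\Gamma \cap \fR$ which spans $\fR$ over $\bbR$, the induced $\kappa(v) \in \Aut(\fR)$ fixes a spanning set and is therefore the identity. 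By faithfulness of $\cK^{0}$ on $\fR$ (Proposition \ref{prop:nocompactfactor}), $\kappa(v) = 1$ in $\cK^{0}$, so $v \in \fR$; a dimension count then gives $\RR^{k} = \fR_{0}$, which is (1).

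Parts (2) and (3) follow immediately: the $\fR_{0}$-orbits coincide with the $\RR^{k}$-orbits, so $\mathsf{q}$ and $\mathsf{p}$ share a common foliation, and since $\RR^{k}$ acts simply transitively by isometries on each fiber the induced metric is a left-invariant metric on $\bbR^{k}$, hence Euclidean, proving (2); the common fibration together with the Riemannian submersion condition on $g$ then produces a canonical isometry $X/\fR \to \HH^{n}$ intertwining $\mathsf{q}$ with $\mathsf{p}$, proving (3). The main obstacle is the Borel density step: one must carefully transport the centrality of $\RR^{k}$ in $\RR^{k}\cdot \Gamma$ into centrality of the connected abelian $\phi(\RR^{k})$ under the lattice $\Theta \cap S$, and then kill this image using the discreteness of $Z(S)$ granted by Theorem \ref{thm:structure}(7).
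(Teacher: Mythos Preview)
Your overall strategy mirrors the paper's --- Borel density to force $\RR^{k}\subseteq\ker\phi$, plus a rank bound coming from $\Delta\subseteq\ZZ^{k}$ --- but the Hirsch--length step contains a genuine error that breaks the rest of the argument.

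You claim that the Hirsch length of $\Delta=\Gamma\cap\ker\phi$ equals $\dim\fR$. It does not: $\Delta$ is a uniform lattice in $\ker\phi$, hence acts properly and cocompactly on the contractible fibre $\mathsf{q}^{-1}(y)\cong\fR_{0}$, so $\rank\Delta=\dim\fR_{0}=\dim\fR-\dim\fT$. (Equivalently, a uniform lattice in a Lie group $L$ has Hirsch length $\dim L-\dim K_{L}$, and the maximal compact of $(\ker\phi)^{0}=\fR\cK^{0}$ has dimension $\dim\fT+\dim\cK^{0}$, not just $\dim\cK^{0}$.) Thus you only obtain $\dim\fR_{0}\leq k$. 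Combined with the free action of $\RR^{k}$ on the $\dim\fR_{0}$--dimensional fibres this gives $k=\dim\fR_{0}$, but \emph{not} $\dim\fR=k$; nothing so far excludes a nontrivial maximal torus $\fT\leq\fR$. Consequently your surjectivity of $\RR^{k}\hookrightarrow\fR/(\fR\cap\cK^{0})$ fails, and with it the chain ``$\fR$ abelian $\Rightarrow$ $\fR=\fN$ simply connected $\Rightarrow$ $\fT=\{1\}$''. A related casualty is your last identification step: you invoke ``the uniform lattice $\Gamma\cap\fR$ which spans $\fR$'', but $\Gamma\cap\fR\subseteq\Delta\subseteq\ZZ^{k}$ has rank $k=\dim\fR_{0}$, so it cannot span $\fR$ if $\fT\neq\{1\}$; in fact $\Gamma\cap\fR$ is not known to be uniform in $\fR$.

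The paper closes this gap geometrically rather than by counting dimensions in $\fR$. Once $V=\RR^{k}$ acts simply transitively and isometrically on each fibre, that fibre is a Euclidean space $\EE^{k}$, and Proposition~\ref{prop:kerphi}(1) embeds $\ker\phi$ faithfully into $\Isom(\EE^{k})=\RR^{k}\rtimes\mathrm{O}(k)$. There $V$ is the full translation subgroup, hence normal in $\ker\phi$ and equal to the nilpotent radical of $(\ker\phi)^{0}$; since $\fR_{0}$ contains this nilradical and has the same dimension $k$, one concludes $V=\fR_{0}$ directly, without ever needing $\fT=\{1\}$.
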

\begin{proof} 
Write ${\rm Isom}(X)^0=  \mathsf{G} = {\mathsf R}\cdot \mathsf{S}$, as
in Section \ref{sect:action_radicalquotient}. Put $\Gamma_{0} = \Gamma \cap \mathsf{G}$.  Then $\Gamma_{0}$ is a uniform lattice in $\mathsf{G}$. 
Consider the projection homomorphism $$ \phi: \Isom(X) \to \Isom(X/\fR) \; , $$ corresponding to the Riemannian submersion $\mathsf{q}$. 
If the image $$ S_0=\phi({\rm Isom}(X)^0) = \phi(\fS)  $$ is not the trivial group, it is a semisimple Lie group  of non-compact type and $\phi(\Gamma_{0})$ is a uniform lattice in $S_{0}$. (See $(7)$ of Theorem \ref{thm:structure}.) 

Denote with  $V$ the vector subgroup of $\Isom(X)$ 
which arises  by the isometric action of $\bbR^{k}$ on $X$.  
By construction,  $V$ is normalised by $\Gamma_{0}$, and the image $\phi(V)$ in $S_{0}$ is normalised by the uniform lattice $\phi(\Gamma_{0})$.  By Borel's density theorem (applied to the adjoint form of  $S_{0}$),  $S_{0}$ does not  contain any abelian connected subgroup normalised by $\phi(\Gamma_{0})$. This shows that $V$ is contained in $\ker \phi$. Consequently, $V$ acts by isometries on each fiber of the projection $\mathsf{q}$, and by construction the action is free. 

Next observe that the fibers of $\mathsf{q}: X \to X/\fR$ are at most $k$-dimensional. 
Indeed, by Proposition \ref{pro:Gam_kerphi},  $$\rad_{\fG}(\Gamma) = \Gamma_{0} \cap \ker \phi  $$ is a virtually polycyclic normal subgroup, and it acts with compact quotient on each fiber $\mathsf{q}^{-1}(y)$. This shows that
$\dim \mathsf{q}^{-1}(y) = \rank \rad_{\fG}(\Gamma)$, see, for example,
 the argument given in the proof of Corollary \ref{cor:solvrank}. 
As observed in (*), we must have $\rad_{\fG}(\Gamma) \subseteq \ZZ^{k}$, and consequently $\rank \rad_{\fG}(\Gamma) \leq k$. 

We deduce that  $V$ is a simply transitive group of isometries on each fiber $\mathsf{q}^{-1}(y)$. Since any left invariant metric on a  vector space is flat (and unique up to an  affine transformation), this implies that $\mathsf{q}^{-1}(y)$ is isometric to an Euclidean space $\EE^{k}$ on which $V$ acts by translations.  In particular, (2) holds. 

Recall from Proposition \ref{prop:kerphi} 
that $\ker \phi$ acts faithfully on $\mathsf{q}^{-1}(y)$ by isometries, so that 
$\ker \phi$ embeds as a subgroup of the Euclidean group $\Isom(\EE^{k})$.
It follows that the above translation group $V$ is normal in $\ker \phi$, and it is the nilpotent radical of $(\ker \phi)^0$. Recall that a maximal simply connected normal subgroup $\fR_{0}$ of $\fR$, containing the nilpotent radical of $\fR$, acts simply transitively on the fibers.  
We conclude that $V = \fR_{0}$. 
In particular (1), holds.  

So far it is shown that the Riemannian submersions $\mathsf{q}$ and the projection to 
$\HH^n$ have the same fibers. Clearly, any two Riemannian submersions $f_{i}: X \to B_{i}$ with the same fibers give rise to an isometry $h: B_{1} \to B_{2}$ such that $f_{2} = h \circ f_{1}$. Therefore, (3) holds. 
\end{proof}


Next we turn to describe the continuous symmetries of arbitrary $\Gamma$-invariant metrics on $X$. We will show that these are mainly determined by the group extension \eqref{centralex}. 

\subsection{Symmetry and rigidity of fiberings} \hspace{1cm}

\smallskip 
A Riemannian submersion $\mathsf{q}: X \to Y$ will be called \emph{$\Gamma$-compatible}, if
the following hold:  
$\Gamma \leq \Isom(X)$ permutes the fibers of $\mathsf{q}$, the image of the induced map $\phi: \Gamma \to \Isom(Y)$ is a discrete uniform subgroup, and the kernel of $\phi$ is virtually solvable.

\begin{proposition} \label{prop:adjointform}
Let $\mathsf{q}: X \to Y$ be a $\Gamma$-compatible Riemannian submersion, such
that\/ $Y$ is a homogeneous Riemannian space, with\/  $\Isom(Y)^{0}$ semisimple of non-compact type. Then the adjoint form of\/ $\Isom(Y)^{0}$ is isomorphic to $\PSO(n,1)$. 
\end{proposition}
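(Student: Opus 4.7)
The plan rests on two ingredients: the rigidity of the central subgroup $\ZZ^k$ inside $\Gamma$ (observation (*) after \eqref{centralex}), and Mostow strong rigidity.

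First, I would analyse the kernel of the homomorphism $\phi : \Gamma \to \Isom(Y)$ induced by the $\Gamma$-compatibility of $\mathsf{q}$. By hypothesis $\ker\phi$ is virtually solvable and normal in $\Gamma$, and by observation (*), $\ZZ^k$ is the unique maximal such subgroup, so $\ker\phi \leq \ZZ^k$. Hence $\phi(\Gamma)$ sits in a central extension
\[
1 \to A \to \phi(\Gamma) \to \Theta \to 1,
\]
with $A = \ZZ^k/\ker\phi$ a finitely generated abelian group.

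Next, I would pass to the identity component. Since $\phi(\Gamma)$ is a discrete uniform subgroup of $\Isom(Y)$, Lemma \ref{lem:lattice_in_component} yields that $\Lambda := \phi(\Gamma) \cap \Isom(Y)^0$ is a cocompact lattice in the semisimple Lie group $\Isom(Y)^0$, and $A_0 := A \cap \Lambda$ has finite index in $A$. Set $\bar G = \Isom(Y)^0/Z$ (the adjoint form), and let $\bar\Lambda$ be the image of $\Lambda$ in $\bar G$. Since $A_0$ is central in $\phi(\Gamma)$, it centralizes the Zariski-dense lattice $\Lambda$, so Borel density gives $A_0 \subseteq Z$. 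Therefore $\bar\Lambda$ is a quotient of $\Lambda/A_0$, which in turn embeds as a finite-index subgroup of $\phi(\Gamma)/A = \Theta$. As this finite-index subgroup of $\Theta$ is itself a cocompact lattice in $\PSO(n,1)$ with trivial center, the central kernel $(Z \cap \Lambda)/A_0$ of the projection $\Lambda/A_0 \to \bar\Lambda$ must be trivial; hence $\bar\Lambda$ is isomorphic to a cocompact lattice in $\PSO(n,1)$.

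Finally, I would invoke Mostow strong rigidity. For $n \geq 3$, two cocompact lattices in centerless semisimple Lie groups of noncompact type that are abstractly isomorphic force the ambient groups to be isomorphic, so $\bar G \cong \PSO(n,1)$. For $n = 2$ the same conclusion follows elementarily from $\cd \bar\Lambda = 2$, which rules out every centerless semisimple target of noncompact type other than $\PSL_2(\bbR) = \PSO(2,1)$.

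The principal obstacle is the interplay between the central extension structure of $\phi(\Gamma)$ and the algebraic rigidity of cocompact lattices in semisimple groups of noncompact type. Borel density is the essential input that eliminates the central factor $A$ after passing to the adjoint form, allowing the hyperbolic rigidity of $\Theta$ to determine $\bar G$.
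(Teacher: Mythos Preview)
Your proposal is correct and follows essentially the same route as the paper: bound $\ker\phi$ inside $\ZZ^k$ via observation~(*), pass to the adjoint form of $\Isom(Y)^0$, use Borel density to absorb the central abelian part into the center, obtain an isomorphism between a finite-index subgroup of $\Theta$ and a cocompact lattice in the adjoint group, and then apply Mostow rigidity for $n\geq 3$ (with a dimension count for $n=2$). The only cosmetic difference is that the paper pulls back to a finite-index subgroup $\Gamma_0\leq\Gamma$ before projecting to the adjoint form, whereas you work directly on the image side with $\Lambda=\phi(\Gamma)\cap\Isom(Y)^0$; the ``symmetric argument'' in the paper corresponds exactly to your step showing $(Z\cap\Lambda)/A_0$ is trivial.
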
 
\begin{proof} 
Put $S = \Isom(Y)^{0}$,  $\Theta_{\phi} = \phi(\Gamma) \cap S$ and $\Gamma_{0} = \phi^{-1}(\Theta_{\phi})$.  Note that $\Gamma_{0}$ is a finite index characteristic subgroup in $\Gamma$.
It follows that  $\Lambda_{0} = \Gamma \cap \ZZ^{k}$ is the maximal solvable normal subgroup of $\Gamma_{0}$. In particular, $\Gamma_{0}$ satisfies an exact sequence $1\ra \Lambda_{0} \ra \Gamma_{0} \lra \Theta_{0} \ra 1 $, analogous to
\eqref{centralex}, 
where $\Theta_{0}$ is a uniform lattice in $\PSO(n,1)$.

Let $\hat \phi: \Gamma_{0} \to \hat S$ denote the induced map, where $\hat S$ is the 
adjoint form of $S$. Since the image  $\phi(\Gamma)$ divides $Y$, 
$\phi(\Gamma) \cap S$ is a uniform lattice in $S$, and 
$\hat \phi(\Gamma_{0})$ is a uniform lattice in $\hat S$.

Since $\ker \hat \phi$ is an extension of an abelian group (the center of $S$)  by $\ker \phi \cap \Gamma_{0}$, and the latter is virtually polycyclic, $\ker \hat \phi$ is virtually solvable.  By the remark in (*), 
the image of  $ \ker  \hat \phi$ 
in $\Theta_{0}$ must be trivial. This shows that $\ker \hat \phi$
is contained in the characteristic central subgroup $\Lambda_{0}$ of $\Gamma_{0}$.   
A symmetric argument with respect to the homomorphism $\hat \phi: \Gamma_{0} \to \hat \phi(\Gamma_{0})$ shows that, 
 in fact, $\ker \hat \phi = \Lambda_{0}$. We conclude that there exists an isomorphism of uniform lattices $$ \bar \phi:\Theta_{0 } \to \hat \phi(\Gamma_{0})\;  . $$ 

If $n \geq 3$, the Mostow strong rigidity theorem  \cite[Theorem A']{Mostow1}
states that $\bar \phi$ extends to an isomorphism $ \mathrm{PSO}(n,1) \ra \hat S_0$.  
In the case $n=2$, $\Theta_{0}$ is a surface group, and so is $\hat \phi(\Gamma_{0})$.
Therefore, $\hat S_0$ is isomorphic to ${\rm PSO}(2,1)$. 
\end{proof}

The symmetry properties of $\Gamma$-invariant metrics on $X$ are tightly coupled to the group extension \eqref{centralex}: 

\begin{theorem}[Isometry group and extension class] \label{thm:non_locally_homogeneous}
Let $\Theta$ be a torsion free lattice in  ${\rm PSO}(n,1)$. 
Assume that  the extension class for $\Gamma$, defined by \eqref{centralex},  has infinite order. 
Then  the following hold for the manifold $X/\Gamma$:
\begin{enumerate}

\item[(1)]  $X/\Gamma$  admits a metric of large symmetry.
\item[(2)]  For $n\geq 3$, $X/\Gamma$ does not admit a locally homogeneous Riemannian metric. 
\item[(3)]  For $n=2$, $X/\Gamma$ admits the structure of a locally homogeneous Riemannian manifold. In particular, $\Gamma$ embeds into a connected Lie group. 
\end{enumerate}
\end{theorem}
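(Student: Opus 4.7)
\emph{Proof plan.}

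\textbf{Part (1)} is essentially immediate from the material preceding the statement: Proposition \ref{prop:metric_exists} supplies a $\Gamma$-invariant Riemannian metric on $X=\RR^k\times\HH^n$ such that $X\to\HH^n$ is a Riemannian submersion, and Corollary \ref{cor:large_symmetry} then exhibits a complete infrasolv tower of length one with base $\HH^n/\Theta$, so the metric is of large symmetry.

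\textbf{Part (2).} I argue by contradiction. Assume $\Gamma$ is a uniform lattice in some connected Lie group $G$; let $\fR$ be the solvable radical of $G$, $\fS$ a Levi subgroup, and $\cK$ the maximal compact connected normal subgroup of $\fS$. The Bieberbach--Auslander--Wang theorem (Proposition \ref{prop:key-correct}) gives that $\Gamma\cap\fR\cK$ is a uniform lattice in $\fR\cK$ and that $\Gamma$ projects onto a uniform lattice in the semisimple quotient $S=G/\fR\cK$ of non-compact type. Since $\Gamma\cap\fR\cK$ is a virtually polycyclic normal subgroup of $\Gamma$, the observation $(*)$ made after \eqref{centralex} forces $\Gamma\cap\fR\cK=\ZZ^k$ up to finite index; in particular $\dim\fR\cK=k$, and the image of $\Gamma$ in $S$ is commensurable with $\Theta\subset\PSO(n,1)$. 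The hypothesis of local homogeneity translates, after passage to $\Isom(X)^0$, into a $\Gamma$-compatible Riemannian submersion onto the symmetric space of $S$, so Proposition \ref{prop:adjointform} identifies the adjoint form of $S$ with $\PSO(n,1)$---this is where $n\geq 3$ enters, via Mostow rigidity. Hence $S$ is a finite cover of $\PSO(n,1)$.

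Using centrality of $\ZZ^k$ in $\Gamma$ together with Mostow's deformation (Proposition \ref{prop:Mostow_straight}), I next show that $\fR\cK$ is centralized by a finite-index subgroup of $G$ and, as a connected abelian Lie group containing $\ZZ^k$ as a cocompact lattice, is isomorphic to $\RR^k$. Up to a finite cover we therefore have a \emph{central} extension of Lie groups
$$ 1 \lra \RR^k \lra G \lra S \lra 1. $$
By Whitehead's second lemma $H^2(\lie{s},\RR^k)=0$, and by Van Est applied to the symmetric space $\HH^n$ of $S$,
$$ H^2_{\mathrm{cont}}(S,\RR^k)\;=\; H^2(\HH^n,\RR^k)\;=\;0. $$
Restricting to the lattice $\Theta\subset S$, the extension class of $1\to\RR^k\to\RR^k\cdot\Gamma\to\Theta\to 1$ vanishes in $H^2(\Theta,\RR^k)$. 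But that class is the image of $[f]\in H^2(\Theta,\ZZ^k)$ under the coefficient change $\ZZ^k\hookrightarrow\RR^k$, whose kernel is exactly the torsion subgroup. Therefore $[f]$ has finite order, contradicting the hypothesis.

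\textbf{Part (3).} For $n=2$, $\Theta$ is a surface group and $H^2(\Theta,\ZZ)=\ZZ$ is generated by the Euler class of $\Theta\subset\PSL_2\RR=\PSO(2,1)^0$, realized by the central extension coming from the universal cover $\widetilde{\PSL_2\RR}\to\PSL_2\RR$. An arbitrary class in $H^2(\Theta,\ZZ^k)=\ZZ^k$ is then a $k$-tuple of integer multiples of the Euler class, and pushing out this universal cover along $\ZZ\hookrightarrow\RR$ in each coordinate produces a connected Lie group $G_{[f]}$ fitting into a central extension $1\to\RR^k\to G_{[f]}\to\PSL_2\RR\to 1$ whose restriction to $\Theta$ recovers $\Gamma$ exactly. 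Then $\Gamma$ embeds as a uniform lattice in $G_{[f]}$, and any $G_{[f]}$-invariant Riemannian metric on $G_{[f]}/K$ (with $K$ a maximal compact subgroup) descends to a locally homogeneous metric on $X/\Gamma$.

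\textbf{Main obstacle.} The delicate step is part (2): converting the discrete centrality of $\ZZ^k$ in $\Gamma$ into centrality of $\fR\cK$ in (a finite cover of) $G$---which requires combining Mostow straightening with Borel-density arguments to rule out a nontrivial compact factor $\cK$ and to force $\fR$ to be abelian---and then the Van Est computation showing $H^2_{\mathrm{cont}}(S,\RR)=0$ for finite covers of $\PSO(n,1)$ with $n\geq 3$. These two inputs together are exactly what convert the Mostow-type linear rigidity into the cohomological obstruction that forces $[f]$ to have finite order.
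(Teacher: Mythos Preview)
Your arguments for (1) and (3) coincide with the paper's. For (2) your endgame is genuinely different from the paper's, and it works, but two points deserve comment.

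\emph{Where you diverge.} After establishing that (a finite-index subgroup of) $\Gamma$ sits in a connected $G$ with semisimple quotient $S$ locally isomorphic to $\PSO(n,1)$, the paper argues that $S$ is actually a direct factor of $\fG=\Isom(X)^0$ and then applies Mostow's deformation theorem (Proposition~\ref{prop:Mostow_straight}) to straighten $\Gamma$ into a lattice $\Gamma'=(\Gamma'\cap\fR\cK^0)\cdot(\Gamma'\cap S)$; the resulting virtual product decomposition of $\Gamma$ forces $[f]$ to be torsion via transfer. You instead reduce to a central extension $1\to\RR^k\to G\to S\to 1$ and kill it cohomologically, which also forces the pushout class, hence $[f]$, to be torsion. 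Your route is arguably cleaner once the structural reduction is done: it avoids invoking the full strength of Mostow's deformation theorem and makes transparent why $n\geq 3$ versus $n=2$ behave differently (namely $H^2$ of the compact dual sphere).

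\emph{Two corrections.} First, the step ``$\fR\cK$ is centralized and isomorphic to $\RR^k$'' does not need Mostow straightening; the paper obtains it from Borel density alone (the central $\ZZ^k$ sits as a lattice in $\fN$, so $S$ centralizes $\fN$), together with Corollary~\ref{cor:no_normal_compact} to dispose of compact normal pieces. Citing Proposition~\ref{prop:Mostow_straight} here muddies the logic. Second, your Van~Est identification ``$H^2_{\mathrm{cont}}(S,\RR^k)=H^2(\HH^n,\RR^k)$'' is not correct as written: Van~Est identifies continuous cohomology with relative Lie algebra cohomology $H^*(\lie{s},\lie{k};\RR^k)$, which for a symmetric pair equals $H^*$ of the \emph{compact dual} $S^n$, not of the contractible $\HH^n$. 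The conclusion $H^2_{\mathrm{cont}}(S,\RR^k)=0$ for $n\geq 3$ is nonetheless right, either because $H^2(S^n,\RR)=0$, or more directly because Whitehead's lemma splits the Lie algebra extension and $\pi_1(\PSO(n,1))$ is finite for $n\geq3$, so any central extension of $S$ by a vector group splits at the group level.
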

\begin{proof} Now (1) is just a special case of Corollary \ref{cor:large_symmetry}.

For (2), suppose that there exists a $\Gamma$-invariant homogeneous 
Riemannian metric on $X$.  In particular, $\fG= {\rm Isom}(X)^{0}$ acts transitively on $X$. 
By Theorem \ref{thm:structure}, 
the radical projection $\mathsf{q}: X \to Y$ is $\Gamma$-compatible and the image 
$S_{0} = \phi(\fG) \leq \Isom(Y)$ is semisimple of non-compact type, and normal in 
$\Isom(Y)^{0}$. Since $\Gamma$ is not solvable, it is clear that $Y$ and $S_{0}$ are non-trivial. Since $\fG$ acts transitively on $X$, $S_{0}$ acts transitively on $Y$. It is clear by now that $S_{0}$ is a Levi subgroup of $\Isom(Y)^{0}$.  If necessary
(see  Example \ref{ex:sl2R}), 
we may repeat the process of dividing out the radical.
 In any case, there exists a $\Gamma$-compatible Riemannian submersion $\mathsf{q}: X \to Y$, where $S_{0} = \phi(\fG) = \Isom(Y)^{0}$
is semisimple of non-compact type and acts transitively on $Y$. 
By Proposition \ref{prop:adjointform}, $S_{0}$  is locally isomorphic to $\PSO(n,1)$.
So is the non-compact type semisimple part $S$ of a Levi subgroup of $\fG$.

Now we are assuming  $n \geq 3$. Therefore $S$ is, in fact, a finite cover of $\PSO(n,1)$, 
and from the beginning the image $S_{0}$ of the radical projection in $\Isom(Y) = \Isom(X/\fR)$
is $\PSO(n,1)$.  As before, we write  $$ \fG ={\rm Isom}(X)^0={\mathsf R} \cdot \mathsf{S} = {\mathsf R} \, \mathcal{K}^0 S \, ,$$  where $ \mathcal K^0$ is compact semisimple. 
Also, by going down to a finite index subgroup, we may assume that $ \Gamma \subset {\rm Isom}(X)^0 $.
 The inclusion of $\Gamma$ then satisfies $\Gamma\cap ({\mathsf R}\cdot \mathcal K^0)= \ZZ^k$. 

Since the nilpotent radical $\fN$ of $\fR$ is simply connected and $S$ is a finite cover of 
${\rm PSO}(n,1)$, it is not difficult to see  
that (up to dividing a finite group in the center), 
$\fG$ is a linear group. 
Recall further 
that,  by Corollary \ref{cor:no_normal_compact}, 
the maximal compact normal semisimple subgroup of $\fG$ is trivial.
Moreover, by Theorem \ref{thm:structure} (2), the intersection of $\Gamma$ with
$\fN$  is a lattice in $\fN$. Since the extension \ref{centralex} is central, the Borel 
density theorem 
implies that $\fN$ is centralised by $S$. Hence, $S$ is, in fact, a  factor of $\fG$.
Therefore, the deformation Theorem  of Mostow Proposition \ref{prop:Mostow_straight} 
states that (a finite index subgroup of) the lattice $\Gamma$ can be deformed into 
a lattice $\Gamma'$, where $ \Gamma' = ( \Gamma' \cap R \cK^{0}) \cdot  (\Gamma' \cap S) $.


It follows that  a finite index subgroup of $\Gamma$ splits as a direct product with factor 
$\bbZ^{k}$. However, this is  impossible,  
since the extension class for $\Gamma$ is assumed to have infinite order. The contradiction shows that (2) holds. 

For (3) recall that any torsion-free lattice 
$\tilde \Gamma$ in $\widetilde \PSL(2,\bbR)$ admits an exact sequence
of the form $1\ra \ZZ \ra  \tilde \Gamma \ra \Theta \ra 1 $, where $\Theta$ is 
a lattice in $\PSL(2,\bbR)$. The extension class of the exact sequence, is determined by the index of $\tilde \Gamma \cap Z$, where
$Z$ is the center of  $\widetilde \PSL(2,\bbR)$.  
From another point of view, this identifies with the Euler-number of the associated circle 
bundle $$ \widetilde \PSL(2,\bbR) / \tilde \Gamma \to \PSL(2, \bbR)/\Theta \; . $$  Such bundles are studied in detail in  \cite[(8.5) Theorem (b)]{Kulkarni-Raymond}. 

Since $\HH^{2}/\Theta$ is an orientable surface, we have $$ H^{2}\left(\HH^{2}/\Theta \, , \, \ZZ \right) =  H^{2}\left(\Theta \, , \, \ZZ\right) \cong \ZZ \; . $$ In particular, in the case $k=1$, $n=2$, 
there exists for any extension class of infinite order  a locally homogeneous orbifold $X/ \Gamma$, where $X= \widetilde \PSL(2,\bbR)$ and 
$\Gamma$ embeds as a lattice in $\widetilde \PSL(2,\bbR)$. This proves (3) in the case $k=1$.  

For $k \geq 2$, we sketch the proof as follows: Note first that the homology functor $H^{2}(\Theta, \, \cdot \, )$ induces an 
isomorphism of abelian groups
$$ \Hom_{\ZZ}(\ZZ, \ZZ^{k}) \lra \Hom_{\ZZ} \! \left(H^{2}(\Theta, \ZZ) \, ,\,  H^{2}(\Theta, \ZZ^{k})\right) \; .
$$ 
This means, in particular, that, given any 
$\psi: H^{2}(\Theta, \ZZ) \to  H^{2}(\Theta, \ZZ^{k})$, there
exists $\varphi: \bbZ \to \ZZ^{k}$ such that $\psi = H^{2}(\Theta, \varphi)$.
Now let $[f] \in H^{2}(\Theta, \ZZ^{k})$ be an extension class. 
Since $H^{2}(\Theta, \ZZ) = \ZZ$, there exists $\psi$,  with $\psi(1) =  [f]$, and
a corresponding $\varphi: \bbZ \to \ZZ^{k}$. 

By this construction, there exists a pushout diagram 
\begin{equation}\label{pushout2} 
\begin{CD}
 1@>>> \ZZ @>>>\Gamma_{0}@>>> \Theta@>>>1\\
 @.  @VV\varphi V  \cap@. ||@. \\
  1@>>> \ZZ^k @>>> \Gamma @>>> \Theta@>>>1.\\
\end{CD} \; , 
\end{equation} 
such that the extension class of the bottom row is precisely $[f]$. Moreover, 
the pushout  diagram 
\begin{equation*}\label{pushout3} 
\begin{CD}
 1@>>> \ZZ @>>> \widetilde \PSL(2,\bbR)@>>> \PSL(2,\RR) @>>>1\\
@.  @VV\varphi V  \cap@. ||@. \\
  1@>>> \RR^k @>>> \RR^{k} \cdot_{\varphi(\ZZ)} \widetilde \PSL(2,\bbR) @>>>  \PSL(2,\RR) @>>>1 \\
\end{CD}  
\end{equation*}
shows that $\Gamma$ embeds as a discrete uniform subgroup of  the push out Lie group 
$$ \fG = \RR^{k} \cdot_{\varphi(\ZZ)} \widetilde \PSL(2,\bbR) \, .$$ 
\end{proof}

Groups which satisfy the assumption of Theorem $\ref{thm:non_locally_homogeneous}$ 
are obtained 
by finding a discrete
cocompact hyperbolic group $\Theta$ whose representative cocycle
$[f]\in H^2(\Theta,\ZZ^k)$ is of infinite order. For example, 
there exists a compact hyperbolic $3$-manifold 
$\HH^3/\Theta$,  whose Betti number $b_1$ is not zero in
$H_1(\Theta,\ZZ)$, see \cite{MI}.  As $H_1(\Theta, \ZZ )\otimes \ZZ^k=H_1(\Theta, \ZZ^k)\cong H^2(\Theta;\ZZ^k)$, the fundamental group $\Theta$ has representative cocycles of infinite order in 
$H^2(\Theta,\ZZ^k)$.

\begin{corollary} \label{cor:non_locally_homogeneous}
There exists a compact aspherical Riemannian manifold $X/\Gamma$ of dimension $4$ that admits a complete infrasolv tower of length one, 
which is fibering over a $3$-dimensional hyperbolic manifold.  
Moreover,  the manifold $X/\Gamma$ does not admit any locally homogeneous 
Riemannian metric.
\end{corollary}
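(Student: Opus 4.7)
The corollary is a direct specialization of Theorem~\ref{thm:non_locally_homogeneous} to the case $n=3$, $k=1$, so the task reduces to exhibiting a torsion-free uniform lattice $\Theta \leq \PSO(3,1)$ together with a central extension
$$ 1 \to \ZZ \to \Gamma \to \Theta \to 1 $$
whose extension class in $H^{2}(\Theta,\ZZ)$ has infinite order. The plan is to produce such a $\Theta$ using Millson's construction, upgrade the existence of $b_{1} \neq 0$ to the required statement about $H^{2}$ via Poincar\'e duality, and then invoke Theorem~\ref{thm:non_locally_homogeneous}.

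First, I would appeal to Millson's theorem (cited as \cite{MI} in the discussion preceding the corollary) to fix a torsion-free uniform lattice $\Theta \leq \PSO(3,1)$ such that the associated compact hyperbolic manifold $M=\HH^{3}/\Theta$ satisfies $b_{1}(M) > 0$. Since $\Theta$ is torsion-free and $M$ is aspherical, $H_{1}(\Theta,\ZZ) = H_{1}(M,\ZZ)$ contains an element of infinite order. Because $M$ is a closed orientable $3$-manifold, Poincar\'e duality (as recorded in the paragraph preceding the corollary) gives an isomorphism $H_{1}(\Theta,\ZZ) \cong H^{2}(\Theta,\ZZ)$, so $H^{2}(\Theta,\ZZ)$ also contains an element $[f]$ of infinite order.

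Next, I would let $\Gamma$ be the central extension classified by $[f]$, fitting into \eqref{centralex} with $k=1$. The hypothesis of Theorem~\ref{thm:non_locally_homogeneous} is thus satisfied with $n=3$, $k=1$. Applying part (1) of that theorem constructs the aspherical orbifold $X/\Gamma$ of dimension $k+n=4$ together with a metric of large symmetry; by Corollary~\ref{cor:large_symmetry} this metric has a complete infrasolv tower of length one whose base is the compact hyperbolic manifold $\HH^{3}/\Theta$. Since $\Theta$ is torsion-free and the Seifert construction yields a free action of $\Gamma$ (see the product model in \eqref{product} and \eqref{seifertfib}), $X/\Gamma$ is a manifold. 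Finally, since $n=3 \geq 3$, part (2) of Theorem~\ref{thm:non_locally_homogeneous} rules out any locally homogeneous Riemannian metric on $X/\Gamma$.

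There is no real obstacle here beyond invoking the input results correctly; the only subtle point is verifying that a specific choice of $n$ and $k$ can be realized, which is handled entirely by Millson's existence result and the Poincar\'e duality isomorphism. All the geometric content—the metric of large symmetry, the infrasolv tower structure, and the nonexistence of a locally homogeneous metric—is already packaged in Theorem~\ref{thm:non_locally_homogeneous} and its supporting corollaries.
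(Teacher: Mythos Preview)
Your proposal is correct and follows exactly the paper's own argument: the paragraph preceding the corollary invokes Millson's result to obtain a compact hyperbolic $3$-manifold with $b_1>0$, uses Poincar\'e duality $H_1(\Theta,\ZZ)\cong H^2(\Theta,\ZZ)$ to produce an infinite-order extension class, and then applies Theorem~\ref{thm:non_locally_homogeneous} with $n=3$, $k=1$. Your additional remark that $X/\Gamma$ is a manifold because $\Theta$ is torsion-free is a helpful clarification the paper leaves implicit.
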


\section{Application to tori}\label{sec:fake}

An $n$-dimensional \emph{exotic torus} $\tau$ is a compact smooth manifold
homeomorphic to the standard $n$-torus $T^n$ but not diffeomorphic
to $T^n$. In this section
we shall prove
that \emph{an exotic torus has no large symmetry} (compare Definition \ref{def:symmetry}). 
In the proof we replace the infrasolv tower  \eqref{eq:tower2} with its associated 
\emph{infranil tower}, which is obtained by naturally dissecting infrasolv orbibundles into 
a composition of infranil orbibundles. 

\subsection{Infrasolv orbibundle fiber over infranil orbibundles} 
Let  $$ \displaystyle \mathsf{p}: X/\Gamma \ra Y/\Theta$$ 
be an infrasolv fiber bundle with associated 
group extension 
\begin{equation}\label{phigroup}
\begin{CD} 1 \to \Delta \to  \Gamma \to  \Theta \to 1 , 
\end{CD}
\end{equation} 
as in Definition \ref{fibersolv}, where $Y = X/\fR$. The induced homomorphism
$$ \phi:{\rm Isom}(X)\ra {\rm Isom}(Y)$$ 
satisfies $\Delta=\ker \,\phi\cap \Gamma$. Recall that the bundle $\mathsf{p}$ is modeled on the
maximal simply connected normal subgroup $\fR_{0}$ of $\fR$, which acts freely on $X$. 
If ${\sf N}$ is the nilpotent radical of $\sf R={\sf R}_0{\sf T}$, then
it is a simply connected characteristic subgroup with  ${\sf N}\leq {\sf R}_0$.
As ${\sf N}$ acts freely on $X$ by Lemma \ref{lem:nilradical}, we may put
$$ Z=X/{\sf N},\ \ \, {\sf V}={\sf R}_0/{\sf N} \, , $$
where $Z$ carries the Riemannian quotient metric from $X$. 
Then the homomorphism $\phi$ factors over $\Isom(Z)$ as follows: 
\begin{equation}\label{twohomo}
\begin{CD}
&\Gamma\,\leq\,{\rm Isom}(X)@>\phi>>\Theta\,\leq\,{\rm Isom}(Y)\\
&  \searrow \phi_{1}  @.  \nearrow \phi_{2} @. \\
& &Q\,\leq\,{\Isom}(Z) \\
\end{CD}\end{equation}

\smallskip 
As $\Gamma\leq{\Diff}(X,{\sf R}_0)$, note
that $\phi_{1}(\Gamma)=Q\leq {\Diff}(Z,{\sf V})$.

\begin{pro}\label{Seidecomp}
The infrasolv fiber bundle
$\displaystyle \mathsf{p}: X/\Gamma \ra Y/\Theta$
decomposes into a composition of an infranil bundle and   an infra-abelian bundle: 
\begin{equation*}\label{twobundles}
\begin{CD}
X/\Gamma\,\ @>\mathsf{p}>>\,Y/\Theta\\
  \mathsf{p}_1\searrow   @.  \nearrow {\mathsf{p}}_2@. \\
&Z/Q \\
\end{CD} \; \; \; . \end{equation*}
\end{pro}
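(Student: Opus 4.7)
The plan is to insert the nilradical $\fN$ of $\fR$ as an intermediate layer between $X$ and $Y$. Since $\fN$ is simply connected (Lemma \ref{lem:nilradical}) and characteristic in $\Isom(X)^0$, acting freely and properly on $X$, the space $Z = X/\fN$ is a contractible Riemannian manifold (with the quotient metric making $X \to Z$ a Riemannian submersion). Because $\fR_0$ acts simply transitively on the fibers of $X \to Y$ (Proposition \ref{pro:solabel}) and $\fN \leq \fR_0$, the quotient $\fV = \fR_0/\fN$ acts simply transitively on the fibers of the induced map $Z \to Y$. At the universal cover level we thus obtain a composition of principal bundles $X \to Z \to Y$ with structure groups $\fN$ and $\fV$, and all three maps are Riemannian submersions for the quotient metrics.

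The main technical obstacle is to verify that $Q = \phi_1(\Gamma)$ acts properly discontinuously on $Z$ with compact quotient. The argument parallels the proof of Theorem \ref{thm:Theta_dis} but with $\fN$ in place of the full radical $\fR$: the kernel $\ker \phi_1$ is an extension of a compact group by $\fN$, hence amenable, and Theorem \ref{thm:compact_trivial} applied to the $\Gamma$-action on the contractible manifold $Z$ precludes any nontrivial compact subgroup of $\Isom(Z)$ normalised by $\Gamma$. Recycling the parametrised Bieberbach-type arguments of Proposition \ref{ClaimA}, Proposition \ref{pro:Gam_kerphi} and Lemma \ref{lem:uniformsubgs2} then shows that $\Gamma \cap \ker \phi_1$ is a uniform lattice in $\ker \phi_1$, so by Lemma \ref{lem:uniformsubgs} and Lemma \ref{lem:properquotient} the image $Q$ is discrete in $\Isom(Z)$ and $Z/Q$ is compact. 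Together with the fact that $\phi_2(Q) = \Theta$ already acts properly discontinuously on $Y$ with compact quotient, the factorisation $\mathsf{p} = \mathsf{p}_2 \circ \mathsf{p}_1$ in diagram \eqref{twohomo} exhibits $\mathsf{p}$ as a composition of well-defined orbifold maps.

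Third, I would identify the fiber structure of each layer. The fibers of $\mathsf{p}_1$ are compact quotients $\fN/\Delta_1^y$ by discrete subgroups $\Delta_1^y \leq \Aff(\fN)$ arising from fiber stabilisers over $y \in Z/Q$. Since the $\mathsf{p}$-fibers are infrasolv modelled on $\fR_0$, the holonomy image of each fiber stabiliser in $\Aut(\fR_0)$ has compact closure; restricting to the characteristic subgroup $\fN \leq \fR_0$ shows the induced holonomy image of $\Delta_1^y$ in $\Aut(\fN)$ also has compact closure, and any $\fR_0$-invariant metric compatible with the original stabiliser restricts to an $\fN$-left-invariant metric compatible with $\Delta_1^y$. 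Hence $\mathsf{p}_1$ is a Riemannian infranil bundle modelled on $\fN$. An analogous argument---using that $\fR_0$ is characteristic in $\Isom(X)^0$, so $Q \leq \Diff(Z,\fV)$, and that the natural projection $\Aut(\fR_0) \to \Aut(\fV) = \GL(\fV)$ is continuous---identifies the fibers $\fV/\Delta_2^y$ of $\mathsf{p}_2$ as compact infra-abelian orbifolds, so $\mathsf{p}_2$ is an infra-abelian (equivalently, infranil with abelian model) bundle.
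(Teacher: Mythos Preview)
Your plan is correct and follows the same skeleton as the paper: factor $\mathsf{p}$ through $Z=X/\fN$ using the principal bundles $X\to Z\to Y$ with structure groups $\fN$ and $\fV=\fR_0/\fN$. The differences are in the justifications, not the architecture.

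For the proper discontinuity of $Q$ on $Z$, the paper takes a shorter path than your proposed rerun of the parametrised Bieberbach machinery. It simply invokes Theorem~\ref{thm:structure}(2), which already gives that $\fN\cap\Gamma$ is a uniform lattice in $\fN$; then $G=\fN\,\Gamma$ is closed in $\Isom(X)$, and Lemma~\ref{lem:properquotient} yields that $G/\fN\cong\Gamma/(\Gamma\cap\fN)$ acts properly on $Z$, whence its finite quotient $Q$ does too. Your route via Proposition~\ref{ClaimA}, Proposition~\ref{pro:Gam_kerphi} and Lemma~\ref{lem:uniformsubgs2} is valid but redoes work already packaged in Theorem~\ref{thm:structure}(2).

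For the infra-abelian structure of $\mathsf{p}_2$, you argue by projecting holonomy along $\Aut(\fR_0)\to\Aut(\fV)$ (which is legitimate since $\fN$, being the nilradical of the characteristic subgroup $\fR_0$, is characteristic in $\fR_0$). The paper instead shows something slightly stronger: using $(\ker\phi)^0=\fR_0(\fT\cK^0)$ from Proposition~\ref{prop:kerphi} together with $\fT\cK\leq\ker\phi_1$ (Proposition~\ref{prop:Kisinthekernel} applied to $\phi_1$), it concludes that a finite-index subgroup of $\phi_1(\Delta)$ already lies in $\fV$ itself, i.e.\ acts by translations with trivial holonomy, so the fibers are genuinely Euclidean orbifolds. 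Your argument gives compact holonomy closure, which is what Definition~\ref{def:infrasolv} requires; the paper's gives a bit more structural information at no extra cost.
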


\begin{proof}
Let $\displaystyle {\sf N}\ra X\stackrel{\mathsf{q}_1}\lra Z$
be the  principal bundle associated to the action of $\fN$ on $X$. 
As $\Gamma\leq {\Diff}(X,{\sf R}_0)$, $\ker \phi_{1} \cap \Gamma$ normalizes 
$\fN$ and may be seen as a group of affine transformations of the fibers of $\mathsf{q}_{1}$.
This induces an embedding $$ \ker \phi_{1} \cap \Gamma \leq {\Aff}({\sf N})$$   such that 
$\ker \phi_{1} \cap \Gamma$ acts properly discontinuously on ${\sf N}$.
If we note that
${\sf N}\cap \Gamma$
is a uniform lattice in ${\sf N}$ from (2) of Theorem \ref{thm:structure},
then ${\sf N}\cap \Gamma$ is a finite index subgroup of $\ker \phi_{1} \cap \Gamma$.
Thus the quotient $$ {\sf N} \big/  \! \left(\ker \phi_{1} \cap \Gamma \right)$$  is an infranil orbifold.
Also $G={\sf N} \, \Gamma$ is  a closed subgroup of ${\rm Isom}(X)$. 
As ${\sf N}$ is normal in $G$, we apply Lemma \ref{lem:properquotient} to
deduce  that $$ G/ \, {\sf N}=\Gamma \big/ \, \Gamma\cap {\sf N}$$  acts properly (discontinuously)
on $X/{\sf N}=Z$. In particular,  $$Q=\Gamma \big/ \left( \ker  \phi_{1} \cap \Gamma \right)$$  acts properly discontinuously on $Z$ with compact quotient.   
Thus $$ {\sf p}_1 :X/\Gamma\,\ra\,Z/Q$$ is an infranil fiber bundle (compare the proof of 
Theorem \ref{thm:basic_bundle}). 
\smallskip

Next observe that  $\displaystyle \mathsf{p}_2 :Z/Q\ra Y/\Theta$ 
is a fiber bundle with ${\sf V}$-geometry in the sense of  Definition \ref{fibersolv}. 
By the commutative diagram \eqref{twohomo}, it is easy to see that
$\phi_{1}(\Delta)=\ker {\phi_{2}} \cap Q$.
In particular, since $\phi_{1}(\Delta)$ acts properly discontinuously on ${\sf V}$ by affine transformations, 
and the quotient ${\sf V}/\phi_{1}(\Delta)$ is a compact Hausdorff space. 

From (2) of Proposition \ref{prop:kerphi},  $(\ker \,\phi)^0= {\sf R}_0 ({\sf T}{\cK}^0)$.  
As $\Delta\leq \ker \,\phi$ and $\ker \,\phi$ has finitely many components,
$\Delta\cap(\ker \,\phi)^0$ is a finite index subgroup of $\Delta$.
On the other hand, if we apply 
Proposition \ref{prop:Kisinthekernel} to $\phi_{1}:{\rm Isom}(X)\ra {\rm Isom}(Z)$,
then it is noted that ${\sf T}{\cK}\leq \ker \,\phi_{1}$. 
Since  $\Delta\cap(\ker\phi)^0 \leq {\sf R}_0({\sf T}{\cK}^0)$
and $\phi_{1}({\sf R}_0) ={\sf V}$, it follows 
$$\phi_{1}(\Delta\cap(\ker \phi)^0)\leq{\sf V}$$
is a discrete uniform subgroup of $V$.
Hence ${\sf V}/\phi_{1}(\Delta)$ is a compact Euclidean orbifold.
As a consequence $\displaystyle {\mathsf{p}}_2 : Z/Q\ra Y/\Theta$ 
is an infra-abelian bundle.
This finishes the proof of Proposition \ref{Seidecomp}.
\end{proof}

\subsection{Exotic tori}\label{subsec:fake}

We come now to the proof of: 

\begin{theorem}\label{theorem:fake}
Let $\tau$ be an $n$-dimensional exotic torus. Then $\tau$ does not
admit any Riemannian metric of \emph{large symmetry}.
\end{theorem}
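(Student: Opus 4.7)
I will argue by contradiction. Assume $\tau=X/\Gamma$ is an $n$-dimensional exotic torus carrying a Riemannian metric of large symmetry, so $\Gamma\cong\bbZ^{n}$, and consider its infrasolv tower
$$\tau=X/\Gamma\lra X_{1}/\Gamma_{1}\lra\cdots\lra X_{\ell}/\Gamma_{\ell}.$$
Each $\Gamma_{i}$ is a torsion-free abelian quotient of $\bbZ^{n}$, so each $X_{i}/\Gamma_{i}$ is a smooth manifold (not merely an orbifold).

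The first step is to rule out the non-trivial case of Definition \ref{def:symmetry} and show $X_{\ell}=\{\mathrm{pt}\}$. Suppose otherwise, so $X_{\ell}$ is a non-trivial Riemannian homogeneous space of a semisimple Lie group. Applying part (8) of Theorem \ref{thm:structure} to the final radical quotient $X_{\ell-1}\to X_{\ell}$, the image $S$ of $\Isom(X_{\ell-1})^{0}$ in $\Isom(X_{\ell})$ is semisimple of non-compact type, and $\Gamma_{\ell}\cap S$ is a uniform lattice in $S$. But $\Gamma_{\ell}\cap S$ is abelian (being a subgroup of the abelian group $\Gamma_{\ell}$), so by the Borel density theorem its Zariski closure $S$ is abelian, a contradiction. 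Hence $X_{\ell}=\{\mathrm{pt}\}$, and by Definition \ref{def:solvrank} the solvable rank of the tower equals $n$.

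Next I plan to show that $\tau$ itself is a compact infrasolv manifold, by induction on $\ell$. At the top of the tower, $X_{\ell-1}$ is acted on transitively by the solvable radical $\fR_{\ell-1}$ of $\Isom(X_{\ell-1})^{0}$ (since $X_{\ell-1}/\fR_{\ell-1}=X_{\ell}=\{\mathrm{pt}\}$); the simply connected normal subgroup $\fR_{\ell-1,0}$ provided by Lemma \ref{lem:chara-R0} and Proposition \ref{pro:solabel} then acts simply transitively, identifying $X_{\ell-1}$ with $\fR_{\ell-1,0}$. By part (1) of Corollary \ref{cor:solvtower}, each lower stage $X_{i}\to X_{i+1}$ is a principal bundle with simply connected solvable structure group; pulling back the transitive solvable action from $X_{i+1}$ to $X_{i}$ through the preimage in $\Isom(X_{i})^{0}$ one obtains, inductively, a simply connected solvable Lie group $R$ of dimension $n$ acting smoothly, properly and simply transitively on $X$. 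Because $\Gamma$ normalises each characteristic subgroup $\fR_{i,0}$, it normalises this total action, so $\Gamma$ embeds as a discrete cocompact subgroup of $\Aff(R)$ whose holonomy has compact closure (being isometric for the given Riemannian metric). Thus $\tau=R/\Gamma$ is a compact infrasolv manifold.

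Finally, I invoke the smooth rigidity of infrasolv manifolds (Baues \cite{Baues}): any two compact infrasolv manifolds with isomorphic fundamental group are diffeomorphic. Since the standard torus $T^{n}$ is itself a compact (flat) infrasolv manifold with $\pi_{1}\cong\bbZ^{n}$, we conclude $\tau\cong T^{n}$, contradicting the assumption that $\tau$ is exotic. The main obstacle I anticipate is the inductive assembly of the single simply connected solvable Lie group $R$ of dimension $n$ out of the staged radical data: one must coherently choose the characteristic subgroups of Lemma \ref{lem:chara-R0} at successive levels and verify that their successive extensions combine into one simply connected solvable Lie group whose affine normaliser contains $\Gamma$. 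The explicit principal-bundle presentation at each stage, the triviality of the relevant compact isotropies forced by Lemma \ref{lem:nilradical} and Theorem \ref{thm:structure}(1), and the torsion-freeness of $\Gamma$ should make this step feasible.
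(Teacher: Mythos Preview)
Your reduction to the case $X_{\ell}=\{\mathrm{pt}\}$ is fine and parallels the paper's use of Proposition~\ref{prop:key-correct}: since each $\Gamma_i$ is free abelian, $\Isom(X_i)^0$ can have no non-compact semisimple factor.

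The genuine gap is the inductive assembly of a single simply connected solvable Lie group $R$ acting on $X$. Your proposed mechanism, ``pulling back the transitive solvable action from $X_{i+1}$ to $X_i$ through the preimage in $\Isom(X_i)^0$'', does not work. By Theorem~\ref{thm:structure}\,(7) the image $\phi_i(\Isom(X_i)^0)$ in $\Isom(X_{i+1})$ is semisimple of non-compact type; in the present abelian situation it is therefore \emph{trivial}. Consequently the radical $\fR_{i+1}$ of $\Isom(X_{i+1})^0$ is not contained in $\phi_i(\Isom(X_i))$ at all, and there is nothing to pull back. The warped-product torus of Example~\ref{ex:revolution_torus} already illustrates this: $\Isom(X)^0=\RR$ acts on the fibre coordinate, the radical quotient is $(\RR,dx^2)$ with $\Isom(\RR)^0=\RR$, yet translations of the base are \emph{not} isometries of the warped metric and do not lie in the image of $\phi$. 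So the successive radicals $\fR_{i,0}$ do not assemble into a single isometric solvable action on $X$; you flagged this as the main obstacle, and it is indeed fatal to the argument as written.

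The paper avoids this obstruction by working stage by stage with Seifert fibering rigidity rather than trying to globalise a transitive action. First, each infrasolv bundle $X_i/\Gamma_i\to X_{i+1}/\Gamma_{i+1}$ is dissected, via Proposition~\ref{Seidecomp}, into an infranil bundle followed by an infra-abelian bundle, so one may assume the tower is infranil. Because $\Gamma_i\cap\fN_i$ is a free abelian uniform lattice in $\fN_i$ (Theorem~\ref{thm:structure}\,(2)), each $\fN_i$ is a vector group $\RR^{n_i}$, and the fibre $\RR^{n_i}/\Delta_i$ is a standard torus. One then applies the Lee--Raymond Seifert rigidity for abelian fibre \cite{LeeRaymond} inductively up the tower: assuming $X_{i+1}/\Gamma_{i+1}\cong T^{n-n_0-\cdots-n_i}$, the rigidity produces a fibre-preserving equivariant diffeomorphism showing $X_i/\Gamma_i$ is a standard torus. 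This replaces your global appeal to infrasolv rigidity \cite{Baues} by an iterated bundle rigidity that only needs the smooth Seifert structure at each step, not an isometric lift.
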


\begin{proof}
Put $\Gamma=\pi_1(\tau)\cong \ZZ^n$ and let $X$ be the universal covering space of $\tau$.
Suppose that $\tau$ has \emph{large symmetry}.
By the definition, there exists 
a tower of Riemannian submersions with infrasolv fibers 
\begin{equation}\label{alltower}
X/\Gamma \lra  X_1/\Gamma_1 \,\lra \,  \ldots \, \lra\, X_{\ell-1}/\Gamma_{\ell-1} 
\, \lra  \,
\{{\rm pt}\}. 
\end{equation}
As $\Gamma_i$ divides $X_i$, applying Lemma \ref{lem:finite_trivial} to 
each $\Gamma_i\leq{\rm Isom}(X_i)$ shows
that $\Gamma_i$ is \emph{torsion-free}, that is, $\Gamma_{i}$ is  a free abelian subgroup $(i=0,\dots,\ell)$.
Since each $\Gamma_i\cap{\rm Isom}(X_i)^0$ is a uniform abelian subgroup
in ${\rm Isom}(X_{i})^0$ by Lemma \ref{lem:isometry_lattices},
Proposition \ref{prop:key-correct} implies 
$$ {\rm Isom}(X_{i})^0={\sf R}_i \, {\sf K}_i \, , $$ where
${\sf R}_i$ is the solvable radical
and ${\sf K}_i$ is a compact connected semisimple group.
Since  \eqref {alltower} is an \emph{infrasolv tower}, 
${\rm Isom}(X_{i})^0$ contains a nontrivial nilpotent radical
${\sf N}_i$.
By Proposition \ref{Seidecomp}, each infrasolv bundle
$\displaystyle X_i/\Gamma_i \lra  X_{i+1}/\Gamma_{i+1}$
dissects  into infranil bundles:
\begin{equation}\label{Qinsert}
X_i/\Gamma_i \lra\,  Z_i/Q_{i}\lra\,  X_{i+1}/\Gamma_{i+1}.
\end{equation}
Since $Q_i$ acts properly discontinuously and $Z_i/Q_i$ is compact, 
note that ${\sf K}_i$ 
acts trivially on $Z_i=X_i/{\sf N}_i$
as in the proof of Proposition \ref{prop:Kisinthekernel}.

Inserting $Z_i/Q_i$ to the sequence  \eqref{alltower}
, we may assume  from the beginning that
the tower \eqref{alltower} is an 
infranil tower. 

With this assumption in place we put  $\Delta_i=\ker \phi_i\cap \Gamma_i$, where $\phi_{i}: \Isom(X_{i}) \to \Isom(X_{i+1})$
denotes  the natural map, $i=0,\dots,\ell-1$.
Since the tower is infranil,  $\Delta_i$ is contained in ${\rm Aff}({\sf N}_i)$, and 
the holonomy image of $\Delta_{i}$  has  compact closure in ${\rm Aut}({\sf N}_i)$.
On the other hand,
${\sf N}_i\cap \Gamma_i$ is a uniform lattice in ${\sf N}_i$
by $(2)$ of  the Theorem \ref{thm:structure}.
Since ${\sf N}_i\cap\Gamma_i$ is free abelian,
${\sf N}_i$ is isomorphic to the vector space $\RR^{n_i}$ for 
$n_i={\rm dim}\,{\sf N}_i$. Then $\Delta_i$
is a Bieberbach group in the Euclidean group ${\rm E}(\RR^{n_i})$. As $\Delta_i$ is free abelian,
we have $\Delta_i=\RR^{n_i}\cap \Gamma_i$.

As $\Gamma_i$ \emph{normalizes} $\RR^{n_i}$,
associated with the group extension $$\displaystyle 1\ra
\Delta_i\ra\Gamma_i\lra \Gamma_{i+1}\ra 1 \; , $$
there is an  \emph{injective Seifert fibering}  (see \cite{LeeRaymond}):
\begin{equation}\label{solvorbi}
\begin{CD}
\RR^{n_i}/\Delta_i@>>> X_i/\Gamma_i@>p_i>> X_{i+1}/ \Gamma_{i+1}
\end{CD}\end{equation}
where $\RR^{n_i}/\Delta_i$ is the standard $n_i$-torus. 

Since $X_{\ell-1}/\Gamma_{\ell-1}\ra X_{\ell}=\{{\rm pt}\}$ is also a Seifert fibering,
$\RR^{n_{\ell-1}}/\Delta_{n_{\ell-1}}=X_{\ell-1}/\Gamma_{\ell-1}$ is an $n_{\ell-1}$-torus.
Assume inductively from \eqref{solvorbi}
that $X_{1}/\Gamma_{1}$ is diffeomorphic to $T^{n_1}=\RR^{n_1}/\ZZ^{n_1}$.
Let $\bar\phi:
\Gamma_1\lra \ZZ^{n_1}$ be an isomorphism induced by
an equivariant diffemorphism $\bar\varphi:X_1\ra \RR^{n_1}$. 
Then we have an isomorphism $\phi: \Gamma \ra
\Delta_0\times \ZZ^{n_1}\cong\ZZ^{n_0+n_1}$ which makes the following
diagram commutative: 
\begin{equation}\label{isoSei}
\begin{CD}
1@>>>\Delta_0@>>>\Gamma @>>> \Gamma_1@>>>1\\
@. @V {\rm id}VV     @V {\phi}VV     @V{\bar\phi} VV \\
1@>>>\Delta_0 @>>> \Delta_0\times \ZZ^{n_1} @>>>\ZZ^{n_1}@>>>1.
\end{CD}
\end{equation} By the \emph{Lee-Raymond-Seifert rigidity} for abelian fiber 
\cite{LeeRaymond},
there exists a fiber-preserving equivariant diffeomorphism
$$(\phi,\varphi):(\Gamma,X)\lra (\ZZ^{n},\RR^{n}) \; . $$
Hence,  $X/\Gamma$ is diffeomorphic to the torus $T^{n}$
$(n=n_0+n_1)$. 
This proves the induction step and finishes the proof.
\end{proof}


\begin{thebibliography}{99}

%

\bibitem{Baues}
O.\ Baues, \emph{Infra-solvmanifolds and rigidity of subgroups in
solvable linear algebraic groups}, Topology {\bf 43} (2004), no. 4,
903-924.

\bibitem{BK} O. Baues and Y. Kamishima,
\emph{Isometry groups with radical, and aspherical Riemannian manifolds with large symmetry II},
preprint.


%





\bibitem{Borel}  A. Borel, \emph{Compact Clifford-Klein forms of symmetric spaces}, Topology {\bf 2}, 111-122 (1963). 


\bibitem{Bredon}
G. Bredon, \lq Introduction to compact transformation groups',  
Academic Press, New York, 1972.

\bibitem{Brown}  K.~Brown,  \lq Cohomology of groups', Graduate Texts in
  Mathematics \textbf{87}, Springer-Verlag, New York, 1994.


\bibitem{CR}
P.E.\ Conner and F.\ Raymond, \emph{Actions of compact Lie groups on
aspherical manifolds}, {\sl\ Topology of Manifolds, Proceedings
Inst. Univ. of Georgia, Athens}, 1969, Markham
 (1970), 227-264.

\bibitem{Davis} M.W.\ Davis, \lq
The geometry and topology of Coxeter groups', 
London Mathematical Society Monographs Series, 32. Princeton University Press, Princeton, NJ, 2008.




\bibitem{FW}
B. Farb and S. Weinberger, \emph{Isometries, rigidity and universal
covers}, Ann. of Math. {\bf 168} (2008), 915-940.



\bibitem{Geiges} H. Geiges, \emph{Normal contact structures on 3-manifolds}, 
Tohoku Math.\ J. {\bf 49}, 415-422.

\bibitem{Greenberg}  M.\ Greenberg, J.R.\ Harper, \lq Algebraic topology. A first course', Mathematics Lecture Note Series, 58. Benjamin/Cummings Publishing Co., Inc., Advanced Book Program, Reading, Mass., 1981. 


\bibitem{Helgason} S.\ Helgason, \lq Differential geometry, Lie groups, 
and symmetric spaces', Pure and Applied Mathematics {\bf 80},
Academic Press, Inc., 1978. 



\bibitem{HS} W.C. Hsiang and J. Shaneson,
\emph{Fake tori,}\ {\sl Topology of manifolds}, J. Cantrell and C.
Edwards (eds.), Markham Publishing (1969), 18-76

\bibitem{Illman} S.\ Illman, \emph{Existence and uniqueness of equivariant triangulations of smooth proper $G$-manifolds with some applications to equivariant Whitehead torsion}, J. Reine Angew. Math. {\bf 524 } (2000), 129-183.

\bibitem{Jacobson} N. Jacobson, \lq Lie Algebras', {\sl Interscience,
New York}, 1975.




\bibitem{Koszul} R.S.\ Koszul, \emph{Lectures on groups of transformations}, Notes by
R.R. Simha and R. Sridharan, Tata Institute, 1965.

\bibitem{Kulkarni-Raymond}  R.\ Kulkarni, F. Raymond,
\emph{3-dimensional Lorentz Space-forms and
Seifert fiber spaces}, J.\ Differential Geometry {\bf 21} (1985), 231-268.

\bibitem{LeeRaymond}
K.B. Lee and F.\ Raymond, \emph{Seifert fiberings},
Mathematical
Surveys and Monographs, vol.\ 166, 2010.


%

\bibitem{MA}
S.\ Maclane, \lq Homology', \ {\sl Die Grundlehren der mathematischen
Wissenschaften} vol.~ 114,  Springer, Berlin, New York 1967.

\bibitem{Myers-Steenrod} S.B.\ Myers, N.E.\ Steenrod, 
{\em The group of isometries of a Riemannian manifold},
Ann.\ of Math.\ (2) {\bf 40} (1939), no. 2, 400-416. 

\bibitem{MI}
J. Millson, \emph{On the first Betti number of a constant negatively
curvved manifold}, Ann. of Math. {\bf 104} (1976), 235-247. 

\bibitem{Milnor}
J.\, Milnor, \emph{On fundamental groups of complete affinely flat manifolds}, Adv. Math.
{\bf 25}, 178-187 (1977).


\bibitem{Mostow1} G.D. Mostow, \emph{'Strong rigidity of locally symmetric spaces'}, \ {\sl
Ann. of Math. Studies} vol.~ 78, Princeton Univ. Press, Princeton 1973.
 



\bibitem{Mostow3} G.D. Mostow,
{\em Arithmetic Subgroups of Groups with Radical},
Ann. Math. 93, 1971 (3), 409-438. 


\bibitem{OVii} E.B.\ Vinberg, V.V.\ Gorbatsevich, O.V.\ Shvartsman, \emph{Discrete subgroups of Lie groups},  Lie groups and Lie algebras II, E.M.S {\bf 21}, Springer Verlag, Berlin, Heidelberg 2000.
  
\bibitem{Palais} R.\ S. Palais, \emph{On the existence of slices for actions of non-compact Lie groups},
Ann. of Math. (2) {\bf 73} (1961), 295-323. 


\bibitem{Raghunathan}
M.S.\ Raghunathan, \lq Discrete subgroups of Lie groups', {\sl
Ergebnisse Math. Grenz\-gebiete} {\bf 68}, Springer Verlag, Berlin, New York 1972.

\bibitem{RW} F. Raymond and D. Wigner, \emph{Construction of aspherical manifolds}, Geometric Appl.\ of Homotopy theory, Springer Lecture Notes in Math.\ {\bf 657},  408-422
(1977). 

\bibitem{Starkov}   A.N.\ Starkov,  
\emph{Counter-examples to a theorem on lattices in Lie groups},
Engl. transl., Mosc. Univ. Math. Bull. {\bf 39} (1984), 65-67. 

\bibitem{Tuschmann} W.\ Tuschmann, 
\emph{Collapsing, solvmanifolds and infrahomogeneous spaces},
Differential Geom.\ Appl.\  {\bf 7} (1997),  no.\ 3, 251-264.


\bibitem{WA}
C.T.C.\ Wall, \emph{Surgery on compact manifolds},
Mathematical surveys and Monographs {\bf 69} (1999).

\bibitem{Wilking}
B.\ Wilking, \emph{Rigidity of group actions on solvable Lie
groups}, Math.\ Ann.\ {\bf 317}  (2000), no. 2, 195-237.


\end{thebibliography}
\end{document}